\theoremstyle{plain}
\newtheorem{thm}{Theorem}[section]
\newtheorem{lem}{Lemma}[section]
\newtheorem{prop}{Proposition}[section]
\newtheorem{cor}{Corollary}[section]
\newtheorem{defs}{Definition}[section]
\theoremstyle{definition}
\newtheorem{rmk}{Remark}[section]
\newcommand{\NN}{{\mathbb{N}}}
\newcommand{\RR}{{\mathbb{R}}}
\newcommand{\QQ}{{\mathbb{Q}}}
\newcommand{\bu}{\mathbf{u}}
\newcommand{\bv}{\mathbf{v}}
\newcommand{\bw}{\mathbf{w}}
\newcommand{\bx}{\mathbf{x}}
\newcommand{\bbf}{\mathbf{f}}
\newcommand{\bF}{\mathbf{F}}
\newcommand{\bnabla}{\boldsymbol{\nabla}}
\newcommand{\define}{\stackrel{\text{\rm def}}{=}}
\newcommand{\Ccal}{{\mathcal C}}
\newcommand{\Bcal}{{\mathcal B}}
\newcommand{\Fcal}{{\mathcal F}}
\newcommand{\Ncal}{{\mathcal N}}
\newcommand{\Gcal}{{\mathcal G}}
\newcommand{\Kcal}{{\mathcal K}}
\newcommand{\Vcal}{{\mathcal V}}
\newcommand{\loc}{{\text{\rm loc}}}
\newcommand{\Mcal}{{\mathcal M}}
\newcommand{\Dcal}{{\mathcal D}}
\newcommand{\Tcal}{{\mathcal T}}
\newcommand{\Ucal}{{\mathcal U}}
\newcommand{\Lcal}{{\mathcal L}}
\newcommand{\Ocal}{{\mathcal O}}
\newcommand{\rd}{{\text{\rm d}}}
\newcommand{\rw}{{\text{\rm w}}}
\newcommand{\rb}{{\text{\rm b}}}
\newcommand{\rc}{{\text{\rm c}}}
\newcommand{\ddt}[1]{\frac{\text{\rm d}#1}{\text{\rm dt}}}
\newcommand{\co}{\operatorname{co}}
\newcommand{\Vinner}[1]{(\!({#1})\!)_{H^1}}
\newcommand{\dual}[1]{\langle{#1}\rangle}
\begin{document}
\numberwithin{equation}{section}


\title[Time-dependent statistical solutions of the 3D NSE]{
  Properties of time-dependent statistical solutions of the
  three-dimensional Navier-Stokes equations}

\author[C. Foias]{Ciprian Foias${}^1$}
\author[R. Rosa]{Ricardo M. S. Rosa${}^2$}
\author[R. Temam]{Roger Temam${}^3$}

\address{${}^1$ Department of Mathematics, Texas A\&M University,
  College Station, TX 77843, USA.}
\address{${}^2$ Instituto de Matem\'atica, Universidade Federal do Rio de Janeiro, 
  Caixa Postal 68530 Ilha do Fund\~ao, Rio de Janeiro, RJ 21945-970,
  Brazil.}
\address{${}^3$ Department of Mathematics, Indiana University, Bloomington,
  IN 47405, USA}
  
\email[R. Rosa]{rrosa@im.ufrj.br}
\email[R. Temam]{temam@indiana.edu}

\date{November 22, 2012}

\thanks{This work was partly supported by the National Science Foundation under the 
grants NSF-DMS-0604235, NSF-DMS-0906440, and NSF-DMS-1109784, by the Research Fund of Indiana University, and by CNPq, Bras\'{\i}lia, Brazil, under the grants 490124/2009-7, 307077/2009-8 and 500437/2010-6.}

\thanks{Accepted for publication in \emph{Annales de l'Institute Fourier.}}

\subjclass[2000]{35Q30, 76D06, 37A60, 28C20}
\keywords{Navier-Stokes equations, statistical solutions, turbulence, measure theory, functional analysis}

\maketitle

\begin{center}
  \emph{This article is dedicated to the memory of Mark Vishik}
\end{center}

\begin{abstract}
This work is devoted to the concept of statistical solution of the Navier-Stokes equations, proposed as a rigorous mathematical object to address the fundamental concept of ensemble average used in the study of the conventional theory of fully developed turbulence. Two types of statistical solutions have been proposed in the 1970's, one by Foias and Prodi and the other one by Vishik and Fursikov. In this article, a new, intermediate type of statistical solution is introduced and studied. This solution is a particular type of a statistical solution in the sense of Foias and Prodi which is constructed in a way akin to the definition given by Vishik and Fursikov, in such a way that it possesses a number of useful analytical properties.
\end{abstract}

\bigskip\bigskip

\section{Introduction}

Our aim in this work is to address the mathematical formulation of the concept of statistical 
solution of the three-dimensional Navier-Stokes equations for incompressible fluids. 
Statistical solutions have been introduced as a rigorous mathematical object to formalize the 
notion of ensemble average in the conventional statistical theory of turbulence. 

In turbulent flows, physical quantities vary rapidly and erratically in space and time but are 
somewhat well-behaved in a statistical sense, when averaged in some form. Averages 
might be taken over a certain time interval, over a certain region in space, and over an 
ensemble of flows (e.g. a number of experiments in a wind tunnel under seemingly the same 
conditions). It is this latter average which is called ensemble average. 

The conventional theory of turbulence has relied in most part on empirical evidence and 
heuristic arguments \cite{reynolds1895, taylor1935, taylor1938, kolmogorov1941a, 
kolmogorov1941b, batchelor1953, hinze1975, moninyaglom1975, frisch1995, lesieur1997}. 
More recently, a number of rigorous results have been 
obtained for mean quantities of three-dimensional flows, usually either from time 
averages of weak solutions (e.g. \cite{howard72, bcfm1995, constantindoering94, constantindoering95, doeringtiti1995, fjmrt05}) or from statistical solutions (e.g. 
\cite{hopf1952, foias74, fmrt2001b, fmrt2001c,basson2006, rrt08}). It is therefore our 
belief that a better understanding of statistical solutions are of fundamental importance for a 
rigorous mathematical approach to the theory of turbulence.

There are two main notions of statistical solutions, one introduced by Foias and Prodi 
\cite{foias72,foiasprodi76} and the other by Vishik and Fursikov \cite{vishikfursikov78,vishikfursikov88}
(see also an earlier related mathematical work by Hopf \cite{hopf1952}). 
In the present work we essentially formulate a modified definition of Vishik-Fursikov statistical solutions in slightly different form than their original definition, and which becomes a particular case of a statistical solution in the sense of Foias and Prodi which is more amenable to analysis and has a number of useful properties. 

A statistical solution as defined in \cite{foias72,foiasprodi76} is a family of Borel measures
parametrized by the time variable and defined on the phase space of the Navier-Stokes 
equations, representing the probability distribution of the velocity field of the flow at
each time (Definition \ref{deftimedependetstatisticalsolution}). 
The definition given in \cite{vishikfursikov78,vishikfursikov88}, in its turn, is that of
a single Borel measure on the space of trajectories, representing the probability distribution
of the space-time velocity field (Remark \ref{rmkorgdefvfss}).

The phase space considered here for the Navier-Stokes equations, which 
we denote by $H$, is the space of square-integrable divergence-free velocity fields with 
the appropriate boundary conditions. We consider either no-slip boundary 
conditions on a bounded set $\Omega\subset\RR^3$ with smooth boundary, 
or periodic boundary conditions on a domain $\Omega=\Pi_{i=1}^3(0,L_i)$, $L_i>0$, 
$i=1,2,3$. It is assumed that the flow is forced by a given external field of 
possibly time-dependent volume forces with values in $H$ and essentially bounded in 
$H$ with respect to the time variable.
In the periodic case, it is also assumed that the space averages of the velocity and force 
fields are zero. 

The concept of Leray-Hopf weak solution is essential to our analysis and refers to a
weak solution in the classical sense of the Navier-Stokes equations which also 
satisfies a certain energy inequality and which is strongly continuous at the initial 
time. This energy inequality and the strong continuity at the initial time play a crucial role 
in our formulations of the statistical solutions. Leray-Hopf weak solutions are also
weakly continuous at any given time, a fact that led us quite naturally to make
extensive use of the weak topology of the space $H$ and of the associated 
Borel measures, allowing for the use of a number of results in measure theory which
are, however, quite delicate. This idea of using the weak continuity of weak solutions
in the study of statistical solutions is originally due to Prodi (see \cite{foiasprodi76}) and 
is exploited here extensively to our notion of Vishik-Fursikov statistical solution.

We first define a Vishik-Fursikov measure, which is more akin to their original definition 
of statistical solution, being indeed a measure in trajectory space. More precisely, 
in our case, a Vishik-Fursikov measure is a Borel probability measure in the 
space of weakly continuous functions and which is carried by the set of Leray-Hopf weak 
solutions (and with finite mean kinetic energy at each time). Then, our definition of 
Vishik-Fursikov statistical solution is that of the family of projections, in time, 
of a Vishik-Fursikov measure.

This definition of Vishik-Fursikov measure is to be compared with the original 
definition of statistical solution in the sense of Vishik and Fursikov 
\cite{vishikfursikov78,vishikfursikov88}. Their definition is that of a measure
on the space of trajectories satisfying a less sharp mean energy inequality 
and such that there exists a measurable subset of the set of weak solutions not
necessarily of Leray-Hopf type and which carries the measure.

One noticeable difference is that, in their definition, the measurability of the carrier is part of 
the assumption, while in our approach we prove that the set of Leray-Hopf weak solutions 
is measurable. 

The study of the topological structure of the set of Leray-Hopf weak solutions is indeed a 
delicate part of the work, which is related to the fact that it is not known whether the weak
solutions are unique or not. Among other similar results, we prove, in fact, that the set of 
Leray-Hopf weak solutions on an interval of the form $[0,T]$ is a 
$\Gcal_{\delta\sigma}$-subset of the set of weakly continuous functions on $[0,T]$ which 
are Leray-Hopf weak solutions on the interval $(0,T]$ (hence not necessarily strongly 
continuous at the initial time), and that this latter set is a $\sigma$-compact
subset of the space of weakly continuous functions on $H$, with the topology of 
uniform weak convergence (Corollary \ref{corgdeltasigma} and 
Proposition \ref{ucalisharpsigmacompact}).

The bounded subsets of the set of Leray-Hopf weak solutions on an interval of the form 
$[0,T]$ is a Borel set, but it is not necessarily compact, so we actually define a Vishik-Fursikov measure as a measure in the sequential closure of the set of Leray-Hopf weak 
solutions which have finite mean kinetic energy at each time and which are continuous 
at the initial time in a mean sense (Definition \ref{defvfmeasure}). The sequential closure 
of the set of Leray-Hopf weak solutions is the $\sigma$-compact 
set mentioned above, for which the bounded sets are compact. Compactness is usually a 
crucial step in the proof of existence of solutions of a number of different types of problems 
and here it manifests itself in the application of the Krein-Milman theorem, in which the proof 
of existence of Vishik-Fursikov measures is based (Theorem \ref{thmexistencevfmeasure}).

Then, another difference in our definition of Vishik-Fursikov measure is that the mean 
energy inequality is not an assumption but actually follows from the fact that the 
measure is carried by the set of Leray-Hopf weak solutions, which themselves satisfy 
an energy inequality. We prove, in fact, a strengthened form of mean energy inequality
(Theorem \ref{thmvfmsatisfiesenergyine}).

Then, as we mentioned above, projecting a Vishik-Fursikov measure, as defined here, to the
phase space, at each time, yields a family of measures which is a statistical solution in
the sense of Foias and Prodi (Theorems \ref{defvfsshalfclosedinterval},
\ref{defvfssopeninterval}, and \ref{thmexistencevfss}). This particular type of statistical
solution is termed a Vishik-Fursikov statistical solution (Definition \ref{defvfss}), and it is 
much more amenable to analysis.

Then, we prove a regularity result saying that a Vishik-Fursikov measure is actually
carried by the set of Leray-Hopf weak solutions (Theorem \ref{vfsscarriedbyus}). 

One natural question then is whether all statistical solutions are Vishik-Fursikov statistical
solutions. This would be true if the weak solutions of the three-dimensional Navier-Stokes 
equations were known to be unique, but since this is not known, the answer to this question
is not trivial. We give, however, in the case of families of statistical solutions 
with support uniformly bounded in $H$, an intrinsic characterization of Vishik-Fursikov 
statistical solutions as limits, at each given time, of convex combinations of Dirac measures 
carried by finite collections of Leray-Hopf weak solutions (Theorem \ref{charactvfr}).

In this work we only consider time-dependent statistical solutions. The mathematical
framework in the case of stationary statistical solutions and, in particular, of Vishik-Fursikov 
statistical solutions will be further developed in \cite{frtssp2}. In that work, Vishik-Fursikov 
stationary statistical solutions play a major role and are shown to have a
number of good properties. 

We believe the solid framework presented here will
be useful not only for the study of stationary statistical flows, such as  
turbulent flows in statistical equilibrium in time, but also in the study of time-dependent 
flows, such as in the case of decaying turbulence. 
The role of Vishik-Fursikov statistical solutions in the study of turbulent flows 
will be given elsewhere. The dependence of Vishik-Fursikov measures and
of Vishik-Fursikov statistical solutions on parameters will also be presented in
future works.

Although the theory presented here has been developed specifically 
in the context of the Navier-Stokes equations, it can serve as a model to 
treat other equations which have similar properties and the potential pathologies 
of the Navier-Stokes equations (e.g. the Boussinesq equations of thermo-hydraulics), such as mainly the possibility of a lack of uniqueness 
and the weak continuity in the phase space.

We intended to dedicate this article to Mark Vishik on his 90th birthday,
but we very sadly learned of his passing away shortly after the meeting in
his honor for his 90th birthday. We thus dedicate this article to
his memory, in deep appreciation of the person he was and in recognition of his
numerous outstanding contributions at the forefront of mathematics, and in
particular of the major role he played in developing a rigorous mathematical
framework for the theory of turbulence.

\section{Preliminaries}

\subsection{The Navier-Stokes equations and the mathematical setting}
\label{NSEsettingsec}

In this section we recall some classical results about individual solutions
of the three-dimensional Navier-Stokes equations, for which 
the reader is referred to 
\cite{leray,lady63,temam,temam3,constantinfoias,fmrt2001a}.

We consider the three-dimensional incompressible Navier-Stokes equations, 
which can be written as
\[ \frac{\partial \bu}{\partial t} - \nu \nabla \bu
       + (\bu\cdot\bnabla)\bu + \bnabla p = \bbf, \qquad
      \bnabla \cdot \bu = 0.
\]
The variable $\bu=(u_1,u_2,u_3)$ denotes the velocity vector field;
the term $\bbf$ represents the mass density of volume forces applied to
the fluid and is assumed given; the parameter $\nu>0$ 
is the kinematic viscosity; and
$p$ is the kinematic pressure. We denote the space variable by
$\bx=(x_1,x_2,x_3)$ and the time variable by $t$.

We allow two types of boundary conditions: periodic and no-slip. 
In the periodic case we assume the flow is periodic with period $L_i$
in each spatial direction $x_i$, $i=1,2,3$, and we set
$\Omega =\Pi_{i=1}^3(0,L_i)$. In this case, since the equations for averages are easy to solve (see e.g. \cite{temam3}), we also assume that the averages of the flow
and of the forcing term over $\Omega$ vanish, i.e.
\[ \int_\Omega \bu(\bx,t) \;\rd\bx = 0,
        \qquad \int_\Omega \bbf(\bx,t) \;\rd\bx = 0.
\]
In the no-slip case, we consider the flow on a bounded domain
$\Omega\subset\RR^3$ with smooth boundary $\partial \Omega$ (at least of class 
$\Ccal^2$), and it is assumed that $\bu=0$ on $\partial \Omega$. 
Other boundary conditions such as those for channel flows can be treated similarly.

In either the periodic or the no-slip case one obtains a functional equation
formulation for the time-dependent velocity field $\bu=\bu(t)$ corresponding, at each time $t$, to the function $\bx\in\Omega\mapsto \bu(\bx,t)$.
For the functions spaces, one starts, in the periodic case, 
with the space of test functions
\[ \Vcal=\left\{\bu=\bw|_\Omega; \;
     \parbox{4.2in}{ $\bw \in \Ccal^\infty(\RR^3)^3, \;\bnabla\cdot\bw=0,
                \;\int_\Omega \bw(\bx)\;\rd\bx = 0$,
        $\bw$ is periodic with period $L_i$ in each
       direction $x_i$.}
    \right\},
\]
while, in the no-slip case, one considers the test functions
\[ \Vcal=\left\{\bu\in \Ccal_\rc^\infty(\Omega)^3; \;
      \bnabla\cdot\bu = 0 \right\},
\]
where $\Ccal_\rc^\infty(\Omega)$ denotes the space of
infinitely-differentiable real-valued functions with compact support
in $\Omega$.

In each case the space $H$ is defined as the completion of $\Vcal$ under
the $L^2(\Omega)^3$ norm. The space $V$ is the completion of $\Vcal$ under
the $H^1(\Omega)^3$ norm. We identify $H$ with its dual and consider
the dual space $V'$, so that $V\subseteq H \subseteq V'$, with
the injections being continuous, and each space dense in the following
one. In the two cases we consider, we have in fact that $V$ is compactly included in $H$.

We denote the inner products in $H$ and $V$ respectively by
\[ (\bu,\bv)_{L^2} = \int_\Omega \bu(\bx)\cdot\bv(\bx) \;\rd\bx,
     \quad \Vinner{\bu,\bv} = \int_\Omega \sum_{i=1,2,3}
        \frac{\partial \bu}{\partial x_i}
          \cdot \frac{\partial \bv}{\partial x_i}\; \rd\bx,
\]
and the associated norms by $|\bu|_{L^2}=(\bu,\bu)_{L^2}^{1/2}$,
$\|\bu\|_{H^1}=\Vinner{\bu,\bu}^{1/2}$.

The duality product between $V$ and $V'$ is denoted by
\[ \dual{\bu,\bv}_{V',V},
\]
and coincides with the $L^2$ inner product if $\bu\in H$ and $\bv\in V$. The norm in $V'$ is
given by
\[ \|\bu\|_{V'}=\sup_{\substack{\bv\in V \\ \bv\neq 0}} 
    \frac{\dual{\bu,\bv}_{V',V}}{\|\bv\|_{H^1}}.
\]

We first define the Stokes operator as an operator $A:V\rightarrow V'$, by duality,
through, the formula
\[ \dual{A\bu,\bv}_{V',V} = \Vinner{\bu,\bv}, \quad \forall\bu,\bv\in V.
\]
The restriction of the operator $A$ to $D(A)=\{\bu\in V; A\bu\in H\}$ yields an unbounded
self-adjoint closed operator $A|_{D(A)}:D(A)\subset H\rightarrow H$ which
is positive definite and with compact inverse. Hence, it has a countable number of 
eigenvalues $\{\lambda_j\}_{j\in \NN}$, counted according to their multiplicity, in increasing
order, with each eigenvalue $\lambda_j$ associated to an eigenfunction $\bw_j$.
The \emph{Galerkin projector} onto the space spanned by the eigenfunctions
associated with the first $m$ eigenvalues is denoted by $P_m$.
Since we are assuming the domain to have a smooth boundary, we have the 
characterizations $D(A)=H^2(\Omega)\cap V$ and $A|_{D(A)}=-P_{LH}\Delta$, where 
$\Delta$ is the Laplacian and $P_{LH}$ is the Leray-Helmholtz orthogonal projector 
from $L^2(\Omega)^3$ onto $H$.

The following Poincar\'e inequality holds:
\begin{equation}
  \label{poincareineq}
  \lambda_1 |\bu|_{L^2}^2 \leq \|\bu\|_{H^1}^2,  \quad \forall\bu\in V,
\end{equation}
where $\lambda_1>0$ is the first eigenvalue of the Stokes operator.

We also consider the trilinear form
\[ b(\bu,\bv,\bw) = \int_\Omega (\bu\cdot\bnabla)\bv\cdot \bw \;\rd\bx, 
     \quad \bu,\bv,\bw\in V,
\]
continuously defined on $V$, which also defines, by duality, a bilinear operator 
$B:V\times V\rightarrow V'$ according to 
\[ \dual{B(\bu,\bv),\bw}_{V',V} = b(\bu,\bv,\bw), \quad\forall\bu,\bv,\bw\in V.
\]

Then, the functional equation takes the form
\begin{equation}
  \label{nseeq}
  \ddt{\bu} + \nu A\bu + B(\bu,\bu) = \bbf.
\end{equation}

Throughout this work we consider weak solutions and statistical solutions on a time
interval $I\subset \RR$ and for the sake of simplicity we make the following standing hypothesis on the forcing term:
\begin{equation}
  \bbf \in L^\infty(I,H).
\end{equation}

Given a subset $X$ of $H$, we also denote by $X_\rw$ this subset
endowed with the weak topology of $H$. In particular, $H_\rw$ 
denotes the space $H$ endowed with its weak topology.
The closed ball of radius $R$ in $H$ is denoted by $B_H(R)$.
Since $H$ is a separable Hilbert space, its weak topology is metrizable
on bounded sets, and in particular $B_H(R)_\rw$ is a completely metrizable 
metric space. 

With this framework set, we have the following definition of a Leray-Hopf weak
solution.

\renewcommand{\theenumi}{\roman{enumi}}
\begin{defs}
  \label{deflerayhopfweaksolution}
  A (Leray-Hopf) weak solution on a time interval $I\subset\RR$
  is defined as a function $\bu=\bu(t)$ on $I$ with values in $H$
  and satisfying the following properties:
  \begin{enumerate}
    \item \label{lhuinhv} $\bu\in L_\loc^\infty(I;H)\bigcap L_\loc^2(I;V)$;
    \item \label{lhutinvprime} $\partial\bu/\partial t \in L_\loc^{4/3}(I;V')$;
    \item \label{lhuinhw} $\bu\in \Ccal_\loc(I;H_w)$, i.e. $\bu$ is weakly continuous 
      in $H$, which means
      $t\mapsto (\bu(t),\bv)$ is continuous from $I$ into $\RR$, for
      every $\bv\in H$;
    \item \label{lhfnse} $\bu$ satisfies the functional equation \eqref{nseeq}
      in the distribution sense on $I$, with values in $V'$
      \footnote{This is equivalent to \[ (\bu(t),\bv)_{L^2} = (\bu(s),\bv)_{L^2}
        + \int_s^t\{(\bbf(\tau),\bv)_{L^2}- \nu \Vinner{\bu(\tau),\bv} 
         - b(\bu(\tau),\bu(\tau),\bv)\}\;\rd\tau, \] for every $t,s$ in $I$ and
      every $\bv$ in $V;$ see e.g. \cite[Ch. 3, Section 1]{temam}};
    \item \label{lheineq} 
      For almost all $t'$ in $I$, $\bu$ satisfies the following energy inequality:
      \begin{equation}
        \label{energyinequalityintegralform}
          \frac{1}{2}|\bu(t)|_{L^2}^2 + \nu \int_{t'}^t \|\bu(s)\|_{H^1}^2 \;\rd s
            \leq \frac{1}{2}|\bu(t')|_{L^2}^2 + \int_{t'}^t (\bbf(s),\bu(s))_{L^2}\;\rd s,
      \end{equation}     
      for all $t$ in $I$ with $t>t'$. These times $t'$ are characterized as the 
      points of strong continuity from the right for $\bu$, and their set is of
      total measure.
    \item \label{lhinitialcont} If $I$ is closed and bounded on the left, with its left
      end point denoted by $t_0$, then the solution
      is strongly continuous in $H$ at $t_0$ from the right, i.e.
      $\bu(t)\rightarrow \bu(t_0)$ in $H$ as $t\rightarrow t_0^+$.
  \end{enumerate}
\end{defs}

From now on, for notational simplicity, \emph{a weak solution will always
mean a Leray-Hopf weak solution.} 

Notice that condition \eqref{lhutinvprime} is actually a consequence of \eqref{lhuinhv} 
and \eqref{lhfnse}.
Condition \eqref{lheineq} on the energy inequality can be interchanged with the 
assumption that $\bu$ satisfies the following energy inequality in the distribution sense
on $I$:
\begin{equation}
  \label{energyinequality}
  \frac{1}{2}\frac{\rd}{\rd t}|\bu(t)|_{L^2}^2 + \nu \|\bu(t)\|_{H^1}^2
    \leq (\bbf(t),\bu(t))_{L^2}.
\end{equation}

The allowed times $t'$ in \eqref{lheineq} 
can also be characterized as the Lebesgue points of the 
function $t\mapsto |\bu(t)|_{L^2}^2$, in the sense that
\begin{equation}
  \label{lebesguepoints}
  \lim_{\tau\rightarrow 0^+} \frac{1}{\tau} \int_{t'}^{t'+\tau} |\bu(t)|_{L^2}^2 \;\rd t
      = |\bu(t')|_{L^2}^2.
\end{equation}
Since $t\mapsto |\bu(t)|_{L^2}^2$ is locally integrable, these Lebesgue points
form a set of full measure. 

Since $\bu$ belongs to $L_\loc^2(I;V)$, condition \eqref{lheineq} implies, upon
use of the Cauchy-Schwarz and Poincar\'e inequalities,
\begin{equation}
   \label{L2enstrophyestimatefinh}
   |\bu(t)|_{L^2}^2 + \nu \int_{t'}^t \|\bu(s)\|_{H^1}^2 \;\rd s
     \leq |\bu(t')|_{L^2}^2 + \frac{1}{\nu\lambda_1} \|\bbf\|_{L^\infty(t',t;H)}^2 (t-t'),
\end{equation}    
for $t'$ and $t$ as in \eqref{lheineq}.

It has been proved in \cite{dascaliuc} that weak solutions 
satisfy a strengthened form of the energy inequality. More precisely,
given a weak solution $\bu$ on an interval $I$, and denoting by $I'$ 
the set of full measure in $I$ defined by the points of strong continuity of
$\bu$ from the right (see the condition \eqref{lheineq} in the Definition 
\ref{deflerayhopfweaksolution}), it follows that for any absolutely continuous, 
nonnegative, nondecreasing function $\psi:[0,\infty)\rightarrow \RR$ with $\psi'$ 
essentially bounded, the solution $\bu$ satisfies
\begin{multline}
  \label{strengthenedenergyineq}
  \frac{1}{2} \psi(|\bu(t)|_{L^2}^2) + \nu \int_{t'}^t \psi'(|\bu(s)|_{L^2}^2)\|\bu(s)\|_{H^1}^2 
    \;\rd s \\
      \leq \frac{1}{2}\psi(|\bu(t')|_{L^2}^2) 
        + \int_{t'}^t \psi'(|\bu(s)|_{L^2}^2)(\bbf(s),\bu(s))_{L^2} \;\rd s,
\end{multline}
for all $t'$ in $I'$ and for all $t$ in $I$ with $t>t'$. The proof in \cite{dascaliuc} has been given in the case of periodic boundary conditions and for $\bbf\in L^2(I;H)$ with $I=[0,T]$, but it can be easily adapted to the case of a bounded domain in $\RR^3$ with a smooth boundary and with more general forces, on an arbitrary interval $I$, such as the case $\bbf\in L^\infty(I,H)$ that we consider.

By using an appropriate sequence of test functions in the inequality \eqref{energyinequality} 
(see \cite[Appendix II.B.1]{fmrt2001a} for the details or \cite[Proposition 7.3]{ball} 
for a different proof), one deduces that a weak solution on an arbitrary interval $I$ also satisfies
\begin{equation}
  \label{energyestimate}
  |\bu(t)|_{L^2}^2 \leq |\bu(t')|_{L^2}^2 e^{-\nu\lambda_1 (t-t')}
     + \frac{1}{\nu^2\lambda_1^2} \|\bbf\|_{L^\infty(t',t;H)}^2
         \left(1-e^{-\nu\lambda_1 (t-t')}\right),
\end{equation}
for almost all $t'$ in $I$ and all $t$ in $I$ with $t'<t$. The allowed times
$t'$ are again the points at which the solution is strongly continuous from the right.

In the case of a weak solution on an interval $I$ closed and bounded on the left
with left end point $t_0$, condition \eqref{lhinitialcont} implies that the point $t_0$ 
is a point of strong continuity from the right, hence the estimate \eqref{energyestimate} 
is also valid for the initial time $t'=t_0$.

Let $R_0$ be given by
\begin{equation}
  \label{defR0}
  R_0 = \frac{1}{\nu\lambda_1}\|\bbf\|_{L^\infty(I;H)}. 
\end{equation}
The energy estimate \eqref{energyestimate} implies the following invariance
property for any ball of radius larger than $R_0$: If $\bu$ is a weak solution
on $[t_0,\infty)$ and $R\geq R_0$, then
\begin{equation}
  \label{invarianceBR}
  \bu(t_0)\in B_H(R) \Rightarrow \bu(t)\in B_H(R), \;\forall t\geq t_0.
\end{equation}

It is well known that given any initial time $t_0$ and any initial
condition $\bu_0$ in $H$, there exists at least one global
weak solution on $[t_0,\infty)$ satisfying $\bu(t_0)=\bu_0$.
See, for instance, \cite{constantinfoias,fmrt2001a,lady63,temam}.

\subsection{Properties of weak solutions}
\label{secpropweak}

In this section we address some further properties of weak solutions. First, in order to
study some topological properties of the space of weak solutions (see Section
\ref{trajectoryspaces}) we will need the following two results.

\begin{lem} [Pasting Lemma]
  \label{pastinglemma}
  Let $\bu^{(1)}$ be a weak solution on an interval $(t_1,t_2]$ and
  $\bu^{(2)}$ be a weak solution on an interval $[t_2,t_3)$, with
  $-\infty\leq t_1<t_2<t_3\leq \infty$ and $\bu^{(1)}(t_2)=\bu^{(2)}(t_2)$.
  Then the function
  \begin{equation}
    \label{concatenation}
      \tilde\bu(t) = \begin{cases}
        \bu^{(1)}(t), & t_1<t< t_2, \\
        \bu^{(2)}(t), & t_2 \leq t < t_3,
    \end{cases}
  \end{equation}
  is a weak solution on the interval $I=(t_1,t_3)$. 
\end{lem}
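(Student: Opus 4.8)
The plan is to verify directly that the concatenated function $\tilde\bu$ satisfies each of the conditions (i)--(vi) of Definition \ref{deflerayhopfweaksolution} on the open interval $I=(t_1,t_3)$. Since $I$ is open, condition (vi) is vacuous, so I only need to check (i)--(v). For (i) and (ii), local integrability of $\tilde\bu$ in $H$, $V$, and of its time derivative in $V'$ are local properties away from $t_2$, where they follow from the corresponding properties of $\bu^{(1)}$ and $\bu^{(2)}$; near $t_2$ one covers a neighborhood by the two half-intervals and adds the contributions, using that $\bu^{(1)}$ is a weak solution up to and including $t_2$ and $\bu^{(2)}$ from $t_2$ on. The key point making the gluing legitimate is the matching condition $\bu^{(1)}(t_2)=\bu^{(2)}(t_2)$ together with the one-sided strong (or at least weak) continuity at $t_2$ built into the two pieces.

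Next I would establish weak continuity (iii). On $(t_1,t_2)$ and on $(t_2,t_3)$ this is inherited from $\bu^{(1)}$ and $\bu^{(2)}$, so the only issue is continuity of $t\mapsto(\tilde\bu(t),\bv)$ at $t=t_2$ for each $\bv\in H$. From the right, $\tilde\bu$ agrees with $\bu^{(2)}$, which is weakly continuous on $[t_2,t_3)$ by (iii) for $\bu^{(2)}$ (note $[t_2,t_3)$ is closed on the left, so weak continuity at $t_2$ from the right holds). From the left, $\tilde\bu$ agrees with $\bu^{(1)}$ on $(t_1,t_2]$; here I use that a weak solution on an interval closed on the right is weakly continuous at its right endpoint. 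Since both one-sided limits equal $(\bu^{(1)}(t_2),\bv)=(\bu^{(2)}(t_2),\bv)$, weak continuity at $t_2$ follows, giving $\tilde\bu\in\Ccal_\loc(I;H_w)$.

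For (iv), the functional equation, I would use the integral (weak) formulation recorded in the footnote to Definition \ref{deflerayhopfweaksolution}: for $\bv\in V$ one has, for $\bu^{(1)}$,
\[
  (\bu^{(1)}(t),\bv)_{L^2} = (\bu^{(1)}(s),\bv)_{L^2}
    + \int_s^t\{(\bbf(\tau),\bv)_{L^2}-\nu\Vinner{\bu^{(1)}(\tau),\bv}-b(\bu^{(1)}(\tau),\bu^{(1)}(\tau),\bv)\}\;\rd\tau
\]
for $t,s\in(t_1,t_2]$, and analogously for $\bu^{(2)}$ on $[t_2,t_3)$. Given arbitrary $s<t$ in $I$, if both lie on the same side of $t_2$ the identity for $\tilde\bu$ is immediate; if $s<t_2<t$, I split the integral at $t_2$, apply the $\bu^{(1)}$ identity on $[s,t_2]$ and the $\bu^{(2)}$ identity on $[t_2,t]$, and add, the boundary terms at $t_2$ cancelling because $\bu^{(1)}(t_2)=\bu^{(2)}(t_2)$ and the integrands coincide piecewise with that of $\tilde\bu$. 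This yields the weak form of \eqref{nseeq} for $\tilde\bu$ on all of $I$. Finally, for (v), the set of admissible $t'$ for $\tilde\bu$ is the union of the admissible sets for $\bu^{(1)}$ in $(t_1,t_2)$ and for $\bu^{(2)}$ in $[t_2,t_3)$, each of full measure in its interval, hence of full measure in $I$; for such $t'$ the energy inequality \eqref{energyinequalityintegralform} for $\tilde\bu$ from $t'$ to any later $t$ follows, when $t'$ and $t$ are on the same side of $t_2$, directly, and when $t'<t_2\le t$ by chaining the inequality for $\bu^{(1)}$ on $[t',t_2]$ with that for $\bu^{(2)}$ on $[t_2,t]$ — here one uses that $t_2$ is a point of strong continuity from the right for $\bu^{(2)}$ and that $|\bu^{(1)}(t_2)|_{L^2}=|\bu^{(2)}(t_2)|_{L^2}$, so the intermediate energy term telescopes cleanly.

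The main obstacle, and the only place real care is needed, is the junction point $t_2$: ensuring that the one-sided behaviors of the two pieces genuinely splice (weak continuity from both sides, and the cancellation/telescoping of boundary terms in the integral identity and the energy inequality). Everything else is a routine localization argument. One should double-check that $\bu^{(1)}$, being a weak solution on $(t_1,t_2]$, is indeed weakly continuous at $t_2$ — this is part of (iii) applied to that interval — and that the energy inequality for $\bu^{(1)}$ can be taken with upper endpoint exactly $t=t_2$, which is also part of its definition since $t_2\in(t_1,t_2]$.
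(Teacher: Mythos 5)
Your proof is correct: the paper itself omits the argument (stating only that ``the proof is simple'' and deferring to \cite{frt2010c}), and your direct verification of conditions (i)--(v) of Definition \ref{deflerayhopfweaksolution} is exactly the intended routine check. You correctly isolate the only delicate point, namely that condition \eqref{lhinitialcont} for $\bu^{(2)}$ on $[t_2,t_3)$ makes $t_2$ a point of strong continuity from the right and hence an admissible starting time for its energy inequality, so that the two energy inequalities chain at $t_2$ via $|\bu^{(1)}(t_2)|_{L^2}=|\bu^{(2)}(t_2)|_{L^2}$.
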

\medskip

\begin{lem}[Compactness Lemma]
  \label{convergenceofsolutions}
  Let $\{\bu_j\}_{j\in\NN}$ be a sequence of weak solutions on
  some interval $I=(t_0,t_1)$, $-\infty \leq t_0 < t_1 \leq \infty$,
  and suppose this sequence is uniformly bounded in $H$.
  Then, there exists a subsequence $\{\bu_{j'}\}_{j'}$ and a weak
  solution $\bu$ on $I$ such that $\bu_{j'}$ converges to $\bu$
  in $H_\rw$ uniformly on any compact interval in $I$. 
\end{lem}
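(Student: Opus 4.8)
The plan is to extract a limit by a diagonal argument using the a priori bounds in Definition \ref{deflerayhopfweaksolution}, then verify that the limit is again a Leray-Hopf weak solution by passing to the limit in the weak formulation and in the energy inequalities.

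First I would set up the uniform estimates. Since $\{\bu_j\}$ is uniformly bounded in $H$, say $\sup_j\sup_{t\in I}|\bu_j(t)|_{L^2}\leq R$, the estimate \eqref{L2enstrophyestimatefinh} together with a covering of $I$ by intervals $[t',t'']$ (where $t'$ is a right-continuity point, which can be taken arbitrarily close to the left endpoint of each subinterval) gives a uniform bound for $\bu_j$ in $L^2_\loc(I;V)$. Combined with the equation \eqref{nseeq} and the standard estimate $\|B(\bu_j,\bu_j)\|_{V'}\leq c|\bu_j|_{L^2}^{1/2}\|\bu_j\|_{H^1}^{3/2}$ valid in 3D, this yields a uniform bound for $\partial\bu_j/\partial t$ in $L^{4/3}_\loc(I;V')$. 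Thus $\{\bu_j\}$ is bounded in $L^2_\loc(I;V)$ with derivatives bounded in $L^{4/3}_\loc(I;V')$.

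Next, on each compact subinterval $[a,b]\subset I$ I would apply the Aubin–Lions compactness lemma to get a subsequence converging strongly in $L^2(a,b;H)$, and weakly in $L^2(a,b;V)$; the uniform $H$-bound plus weak continuity also gives, for each fixed $\bv$ in a countable dense subset of $H$, an equicontinuous (by the $V'$-bound on derivatives) and uniformly bounded family $t\mapsto(\bu_j(t),\bv)$, so Arzelà–Ascoli yields uniform convergence of $(\bu_j(t),\bv)$ on $[a,b]$. Exhausting $I$ by compact intervals and diagonalizing over both the countable dense set of $\bv$'s and the intervals produces a single subsequence $\{\bu_{j'}\}$ and a limit $\bu$ such that $\bu_{j'}\to\bu$ in $H_\rw$ uniformly on compact subintervals of $I$, and $\bu\in L^\infty_\loc(I;H)\cap L^2_\loc(I;V)$ with $\partial\bu/\partial t\in L^{4/3}_\loc(I;V')$, giving \eqref{lhuinhv}, \eqref{lhutinvprime}, and \eqref{lhuinhw}. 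Passing to the limit in the integrated weak formulation (the footnote form of \eqref{lhfnse}) is routine: the linear terms pass by weak convergence in $L^2(a,b;V)$, and the nonlinear term $\int_s^t b(\bu_{j'},\bu_{j'},\bv)\,\rd\tau$ passes because $\bu_{j'}\to\bu$ strongly in $L^2(a,b;H)$ and weakly in $L^2(a,b;V)$, so $\bu$ satisfies \eqref{lhfnse}.

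The main obstacle is verifying the energy inequality \eqref{lheineq} for the limit, since each $\bu_j$ satisfies it only off a $j$-dependent null set, and strong continuity at the left endpoint (condition \eqref{lhinitialcont}) is not being assumed for the $\bu_j$ here (the interval $I=(t_0,t_1)$ is open). The device is to work with the integrated form: for each $j$, \eqref{energyinequalityintegralform} holds for a.e.\ $t'$ and all $t>t'$; integrating in $t'$ over a small interval and using the strong $L^2(a,b;H)$-convergence to pass to the limit, one obtains, for a.e.\ $t'$ and all $t>t'$,
\[
  \tfrac12|\bu(t)|_{L^2}^2 + \nu\liminf_{j'}\int_{t'}^t\|\bu_{j'}(s)\|_{H^1}^2\,\rd s
    \leq \tfrac12|\bu(t')|_{L^2}^2 + \int_{t'}^t(\bbf(s),\bu(s))_{L^2}\,\rd s;
\]
here $|\bu(t)|_{L^2}^2\leq\liminf|\bu_{j'}(t)|_{L^2}^2$ by weak lower semicontinuity at the fixed time $t$, $|\bu(t')|_{L^2}^2=\lim$ along a.e.\ $t'$ by $L^2$-convergence, the forcing term converges by strong convergence, and $\int_{t'}^t\|\bu\|_{H^1}^2\leq\liminf\int_{t'}^t\|\bu_{j'}\|_{H^1}^2$ by weak lower semicontinuity of the $L^2(t',t;V)$-norm. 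This gives \eqref{energyinequalityintegralform} for $\bu$ on a set of full measure of $t'$, hence \eqref{lheineq}. Finally, since $I$ is open there is nothing to check for \eqref{lhinitialcont}, so $\bu$ is a weak solution on $I$, completing the proof.
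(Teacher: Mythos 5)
Your proof is correct and is precisely the classical argument the paper invokes for this lemma: uniform bounds in $L^2_\loc(I;V)$ and $L^{4/3}_\loc(I;V')$ from the energy inequality, Aubin--Lions plus Arzel\`a--Ascoli with a diagonal extraction, and passage to the limit in the weak formulation and in the (integrated-in-$t'$) energy inequality to recover condition \eqref{lheineq} for the limit. The paper gives no further detail beyond citing this standard route, so there is nothing to compare beyond noting that your treatment of the $j$-dependent null sets of admissible $t'$ is exactly the delicate point that needs the care you give it.
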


The proof of Lemma \ref{pastinglemma} is simple (see \cite{frt2010c} for some details). 
The proof of Lemma \ref{convergenceofsolutions} is classical and follows from
uniform (in $j$) estimates of the type \eqref{lhuinhv} and \eqref{lhutinvprime}
of the Definition \ref{deflerayhopfweaksolution} and the Arzela-Ascoli Theorem 
(see e.g. \cite{constantinfoias,lady63,temam,temam3}).

Another result which will be useful is the following criterion
for strong continuity from the right for a weak solution.

\renewcommand{\theenumi}{\arabic{enumi}}
\begin{lem}
  \label{criterionstrongcontinuityright}
  Let $\bu$ be a weak solution on an interval $I\subset \RR$. 
  Let $t'\in I$. Then, the following are equivalent
  \begin{enumerate}
    \item \label{liminfcrit} 
      $\displaystyle \liminf_{\tau\rightarrow 0^+} \frac{1}{\tau}\int_{t'}^{t'+\tau}
      |\bu(s)|_{L^2}^2\;\rd s \leq |\bu(t')|_{L^2}^2$;
    \item \label{strongcontcrit} $\bu$ is strongly continous from the right at $t'$; and
    \item \label{limcrit} 
      $\displaystyle \lim_{\tau\rightarrow 0^+} \frac{1}{\tau}\int_{t'}^{t'+\tau} 
        |\bu(s)|_{L^2}^2\;\rd s = |\bu(t')|_{L^2}^2$;
  \end{enumerate}
\end{lem}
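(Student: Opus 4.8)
The plan is to prove the cycle of implications $\eqref{strongcontcrit}\Rightarrow\eqref{limcrit}\Rightarrow\eqref{liminfcrit}\Rightarrow\eqref{strongcontcrit}$, writing $g(s)=|\bu(s)|_{L^2}^2$, which by property \eqref{lhuinhv} of Definition \ref{deflerayhopfweaksolution} is locally integrable on $I$.

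The implications $\eqref{strongcontcrit}\Rightarrow\eqref{limcrit}$ and $\eqref{limcrit}\Rightarrow\eqref{liminfcrit}$ are immediate: if $\bu$ is strongly continuous from the right at $t'$ then $g(s)\to g(t')$ as $s\to t'^+$, so the averages $\tau^{-1}\int_{t'}^{t'+\tau}g(s)\,\rd s$ converge to $g(t')$; and a limit is in particular a $\liminf$.

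The substance is $\eqref{liminfcrit}\Rightarrow\eqref{strongcontcrit}$, for which two ingredients are combined. First, the weak continuity \eqref{lhuinhw} gives $\bu(s)\rightharpoonup\bu(t')$ in $H$ as $s\to t'^+$, so by weak lower semicontinuity of the norm $g(t')\le\liminf_{s\to t'^+}g(s)$. Second, the energy estimate \eqref{L2enstrophyestimatefinh}, applied with the nonnegative enstrophy term dropped, gives, for every allowed time $s\in I'$ (the full-measure set of points of strong right continuity, cf.\ \eqref{lheineq}) and every $t>s$ in $I$, the forward bound $g(t)\le g(s)+C(t-s)$ with $C=(\nu\lambda_1)^{-1}\|\bbf\|_{L^\infty(I;H)}^2$. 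Now assume \eqref{liminfcrit} and choose $\tau_k\downarrow0$ with $\tau_k^{-1}\int_{t'}^{t'+\tau_k}g\to L\le g(t')$. For each $k$ the set of $s\in(t',t'+\tau_k)$ where $g$ does not exceed its own average over that interval has positive measure, so, $I'$ being of full measure, one may pick $s_k\in I'\cap(t',t'+\tau_k)$ with $g(s_k)\le\tau_k^{-1}\int_{t'}^{t'+\tau_k}g$. Then, for any fixed $t>t'$ in $I$ and all $k$ large enough that $t'+\tau_k<t$,
\[
  g(t)\le g(s_k)+C(t-s_k)\le\frac{1}{\tau_k}\int_{t'}^{t'+\tau_k}g\;+\;C(t-t'),
\]
and letting $k\to\infty$ yields $g(t)\le L+C(t-t')\le g(t')+C(t-t')$; hence $\limsup_{t\to t'^+}g(t)\le g(t')$. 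Together with the lower bound from weak continuity this gives $|\bu(t)|_{L^2}\to|\bu(t')|_{L^2}$ as $t\to t'^+$, and since simultaneously $\bu(t)\rightharpoonup\bu(t')$ in the Hilbert space $H$, we conclude $\bu(t)\to\bu(t')$ strongly, which is \eqref{strongcontcrit}.

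The only slightly delicate step is the passage from the averaged control in \eqref{liminfcrit} to a genuine pointwise upper bound for $g$ just to the right of $t'$; this is exactly what the selection of the allowed times $s_k$ (where $g$ is no larger than its mean over the shrinking intervals) together with the forward‑in‑time, monotone‑type estimate \eqref{L2enstrophyestimatefinh} accomplishes. Everything else is routine.
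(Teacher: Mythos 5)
Your proof is correct and follows essentially the same route as the paper's: select times $s_k\to t'^+$ lying both in the full-measure set of admissible starting points for the energy inequality and in the positive-measure set where $|\bu|_{L^2}^2$ does not exceed its average over $(t',t'+\tau_k)$, then use the energy inequality from $s_k$ and weak continuity at $t'$ to upgrade to strong convergence. The only (harmless) difference is that you work with the crude bound \eqref{L2enstrophyestimatefinh} with the enstrophy term dropped, whereas the paper first shows $\bu(s_k)\to\bu(t')$ strongly and passes to the limit in the full energy inequality, thereby also recovering \eqref{energyinequalityintegralform} at $t'=t'$ as a byproduct.
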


\begin{proof} 
Assume \eqref{liminfcrit} holds. Let $\varepsilon_n$ be a sequence of positive numbers
with $\varepsilon_n\rightarrow 0$. Then, there exists a decreasing sequence of positive 
times $\tau_n \rightarrow 0$ such that 
\[ \frac{1}{\tau_n}\int_{t'}^{t'+\tau_n} \left( |\bu(s)|_{L^2}^2 - |\bu(t')|_{L^2}^2\right)\;\rd s
     \leq \varepsilon_n.
\]
Hence, for each $n\in \NN$, there exists a set $I_n\subset (t',t'+\tau_n)$
of positive Lebesgue measure and such that
\[   |\bu(s)|_{L^2}^2 - |\bu(t')|_{L^2}^2 \leq \varepsilon_n, \qquad
     \text{for all } s\in I_n.
\]
Since $I_n$ is of positive Lebesgue measure, we can
find, for each $n\in \NN$, a time $t_n'\in I_n$ which 
is a point of strong continuity of $\bu$ from the right.
Hence, the following energy inequality holds:
\[  \frac{1}{2} |\bu(t)|_{L^2}^2 + \nu \int_{t_n'}^t \|\bu(s)\|_{H^1}^2 \;\rd s
      \leq \frac{1}{2}|\bu(t_n')|_{L^2}^2 + \int_{t_n'}^t (\bbf(s),\bu(s))_{L^2}\;\rd s,
\]
for all $t\in I$, $t>t_n'$. Moreover, since $t_n'\in I_n$, we 
have that
\[  |\bu(t_n')|_{L^2}^2 - |\bu(t')|_{L^2}^2 \leq \varepsilon_n \rightarrow 0,
\]
as $n\rightarrow \infty$, which means that
\[ \limsup_{n\rightarrow\infty} |\bu(t_n')|_{L^2}^2 \leq |\bu(t')|_{L^2}^2.
\]
This, together with the weak continuity of $\bu$ at $t=t'$, implies
that $\bu(t_n')$ converges strongly in $H$ to $\bu(t')$. Then, 
passing to the limit in the energy inequality, we find that
\[  \frac{1}{2} |\bu(t)|_{L^2}^2 + \nu \int_{t'}^t \|\bu(s)\|_{H^1}^2 \;\rd s
      \leq \frac{1}{2}|\bu(t')|_{L^2}^2 + \int_{t'}^t (\bbf(s),\bu(s))_{L^2}\;\rd s,
\]
for all $t\in I$, $t>t'$. This implies that $t=t'$ is a point of strong continuity
of $\bu$ from the right, which proves \eqref{strongcontcrit}.

Assuming now that $\bu(t)$ converges strongly in $H$ to $\bu(t')$, then 
$t\mapsto |\bu(t)|_{L^2}^2-|\bu(t')|_{L^2}^2$ is continuous
at $t=t'$ and vanishes at $t=t'$, which in turn implies \eqref{limcrit}.
Now it is clear that \eqref{limcrit} implies \eqref{liminfcrit}, which completes the proof.
\end{proof}
\medskip

\subsection{Elements of measure theory}
\label{measuretheory}

In this section, we recall a number of basic results in functional analysis in order
to help the reader whose background is in a more applied area, such as statistical
fluid dynamics.

The statistical solutions defined in this work are Borel
probability measures in some appropriate topological spaces. 
In particular, statistical solutions are defined as families of
Borel probability measures in the phase space of the system. We also consider Borel probability measures in time-dependent function spaces. We shall therefore
collect a number of basic facts from measure theory which are needed 
in this and other works on this subject. For the results mentioned here, 
the reader is referred to \cite{aliprantisborder,bourbaki69,brownpearcy,
rudin,moschovakis,kuratowski}.

In what follows, we will assume that a \emph{topological space} is a \emph{Hausdorff space}, in which any pair of distinct singletons can
be separated by disjoint open sets. 

We recall that the family of \emph{Borel sets} in a given topological space $X$
is the smallest $\sigma$-algebra containing the open sets in that topological
space. We denote the collection of Borel sets in $X$ by $\Bcal(X)$. If the topological space is a reflexive Banach space then 
the Borel sets in the strong topology coincide with the Borel
sets in the weak topology. In particular the Borel sets in $H$
coincide with the Borel sets in $H_\rw$. In this particular case,
Borel sets in $V$ are also
Borel sets in $H$ (see \cite[Appendix IV.A.1, p. 213]{fmrt2001a}).

A \emph{measurable space} is a pair $(X,\Mcal)$ where $X$ is a set and $\Mcal$ is a 
$\sigma$-algebra of subsets of $X$ called the \emph{measurable sets}. 
A \emph{measure space} is a triple $(X,\Mcal,\mu)$ where $(X,\Mcal)$ is a measurable
space and $\mu$ is a measure defined on the sets in $\Mcal$. A probability space is a 
measure space $(X,\Mcal,\mu)$ for which $\mu$ is positive and $\mu(X)=1$. A measure is said to be \emph{complete} if any subset of
a \emph{null set} (a measurable set with measure zero) is also measurable.

Given two measurable spaces $(X,\Mcal)$ and $(Y,\Ncal)$ and a function
$f:X\rightarrow Y$, the function $f$ is said to be measurable, or $(\Mcal,\Ncal)$-measurable,
if $f^{-1}(E)\in \Mcal$ for all $E\in \Ncal$.

A \emph{Borel measure} on a topological space $X$ is a measure $\mu$ on $X$  
defined on the $\sigma$-algebra of the Borel sets $\Bcal(X)$ of $X$. 
When the target space is metrizable (and the measure in the target space is the Borel
measure) then the pointwise limit of a sequence of measurable functions from a 
measurable space into that metrizable space is measurable
(see \cite[Lemma 4.29]{aliprantisborder}).

Given a Borel measure $\mu$ on a topological space $X$,
the $\sigma$-algebra $\Bcal_\mu(X)$ is defined as the smallest $\sigma$-algebra
containing the Borel sets and the subsets of Borel sets of $\mu$-measure zero. 
One can show that
$E\in\Bcal_\mu(X)$ if and only if there exists a Borel set $E_B$ and a subset 
$E_N$ of a Borel set of $\mu$-measure zero such that $E=E_B\cup E_N$.
This representation of $E\in\Bcal_\mu(X)$ may not be unique but the 
$\mu$-measure
of $E_B$ is independent of the representation, so that we can extend the Borel
measure $\mu$ to a complete Borel measure on $\Bcal_\mu(X)$ by defining
$\mu(E)=\mu(E_B)$. Such a measure is called the \emph{Lebesgue extension}
of the Borel measure and is still denoted by $\mu$.
Moreover, we call the elements in $\Bcal_\mu(X)$ as $\mu$-measurable sets.
The collection $\Bcal_\mu(X)$ of $\mu$-measurable sets is usually larger than the 
collection of Borel sets $\Bcal(X)$.

Given two topological spaces $X$ and $Y$ and a continuous function $f:X\rightarrow Y$,
it follows that $f$ is a \emph{Borel map} in the sense that $f^{-1}(E)\in \Bcal(X)$ for all 
$E\in\Bcal(Y)$. Such a continuous function is also $(\Bcal_\mu(X),\Bcal(Y))$-measurable,
with respect to the extension of a Borel measure $\mu$ on $X$.
Notice, however, that given two (Lebesgue extensions of) Borel measures
$\mu$ and $\nu$ on $X$ and $Y$, respectively, a continuous function $f$ may not
be \emph{measurable} from $(X,\Bcal_\mu(X),\mu)$ to $(Y,\Bcal_\nu(Y),\nu)$ since
$f^{-1}(E)$ may not belong to $\Bcal_\mu(X)$ for all $E$ in $\Bcal_\nu(Y)$ (just
take $f$ to be the identity to get a contradiction).
  
A \emph{carrier} of a measure is any measurable subset of \emph{full measure}, i.e.
its complement is of zero measure. The 
\emph{support} of a Borel measure is the smallest closed set of full measure.

Given a topological space $X$, we denote by $\Ccal_\rb(X)$ the space of bounded
real-valued continuous functions on $X$, and by $\Ccal_\rc(X)$ the space of
compactly supported real-valued continuous functions on $X$.

A \emph{locally compact Hausdorff space} $X$ is a (Hausdorff) topological space
with the property that every point has a compact neighborhood.
The Kakutani-Riesz Representation theorem \cite{bourbaki69,rudin} 
asserts that a positive linear functional $f$ defined on the space of 
compactly supported continuous real-valued functions $\Ccal_\rc(X)$ 
on a locally compact Hausdorff space $X$ can be uniquely 
represented by a Borel measure 
$\mu$ on $X$, with 
\[ f(\varphi)=\int_X \varphi(\bu)\;\rd\mu(\bx), \quad \text{for all } \varphi \in \Ccal_\rc(X).
\]

Given two measurable spaces $(X,\Mcal)$ and $(Y,\Ncal)$, 
a measurable function $T:X\rightarrow Y$,
and a probability measure $\mu$ on $(X,\Mcal)$, one can define a probability 
measure $\mu_T$ on $Y$ by the formula $\mu_T(E)=\mu(T^{-1}(E))$, for all measurable 
subsets $E$ of $Y$. The measure $\mu_T$ is called the \emph{measure induced} from 
$\mu$ by $T$, and is sometimes denoted $T\mu$ or $\mu T^{-1}$ 
(see \cite[Section 13.12]{aliprantisborder}). 
It follows that
\begin{equation}
  \label{changeofvariablesinducedmeasures}
  \int_Y \varphi(y) \;\rd\mu_T(y) = \int_X \varphi(T(x))\;\rd\mu(x), \qquad \forall \varphi\in L^1(\mu).
\end{equation}
Moreover, 
\begin{equation}
  \label{integrabilityinducedmeasures}
  \text{$\psi$ is $\mu_T$-measurable and $\psi\circ T\in L^1(\mu)$ if and 
    only if $\psi\in L^1(\mu_T)$.}
\end{equation}

If $\mu$ is a regular Borel measure (as defined in 
\eqref{upperregmeas}-\eqref{compactsetscontainedinE} below) 
on a locally compact Hausdorff space $X$, then
$\Ccal_\rc(X)$ is dense in $L^p(\mu)$, for $1\leq p <\infty$ \cite[Theorem 13.9]{aliprantisborder},
and \eqref{changeofvariablesinducedmeasures} holds for all $\varphi\in \Ccal_\rc(X)$.

In the case $X$ and $Y$ are locally compact topological spaces, a continuous
map $T:X\rightarrow Y$ induces an operator $L_T:\Ccal_\rc(Y)\rightarrow \Ccal_\rc(X)$
given by $L_T\varphi = \varphi\circ T$. Then, regarding a Borel probability measure $\mu$ 
on $X$ as an element of the dual space $\Ccal_\rc(X)'$, it is natural to view the 
induced measure $T\mu$ as $L_T^*\mu$, where 
$L_T^*:\Ccal_\rc(X)'\rightarrow \Ccal_\rc(Y)'$
is the adjoint of the operator $L_T$.

In a metrizable topological space $X$, the following statements concerning
two Borel probability measures $\mu$ and $\nu$ are equivalent \cite[Theorem 15.1]{aliprantisborder}:
\begin{align*}
   \mu=\nu 
   \Leftrightarrow & \mu(G)=\nu(G) \text{ for all open sets } G \\
   \Leftrightarrow & \mu(F)=\nu(F) \text{ for all closed sets } F \\
   \Leftrightarrow & \int_X \varphi(x)\;\rd\mu(x) = \int_X \varphi(x)\;\rd\nu(x), 
     \forall \varphi\in \Ccal_\rb(X) \\
   \Leftrightarrow & \int_X \varphi(x)\;\rd\mu(x) = \int_X \varphi(x)\;\rd\nu(x), \forall 
     \varphi\in \Dcal,
\end{align*}
where $\Dcal$ is any dense subset of $\Ccal_\rb(X)$

In a metrizable topological space $X$ we say that a net $\{\mu_\alpha\}_\alpha$ 
of Borel probability measures on $X$ converges weak-star to a Borel probability measure
$\mu$ on $X$ if 
\[ \int_X \varphi(x) \;\rd\mu_\alpha(x) \rightarrow \int_X \varphi(x) \;\rd\mu(x), 
  \;\forall \varphi\in\Ccal_\rb(X).
\]
(See \cite[Section 15.1]{aliprantisborder}.) This convergence is denoted by
\[ \mu_\alpha \stackrel{*}{\rightharpoonup} \mu.
\]

A Borel measure $\mu$ on a topological space $X$ is called \emph{regular} when
\begin{align}
  \label{upperregmeas}
  \mu(E) & = \inf\{\mu(O); \;O\supset  E, \;O \text{ open in } X\}
    = \inf_{O\in\mathcal{O}_X(E)}\mu(O), \text{ and} \\
  \label{lowerregmeas}
  \mu(E) & = \sup\{\mu(K); \;K\subset  E, \;K \text{ compact in } X\}
    = \sup_{K\in\mathcal{K}_X(E)}\mu(K),
\end{align}
where
\begin{align}
  \label{opensetscontainingE}
  \mathcal{O}_X(E) & = \text{ family of open sets in $X$ containing } E, \\
  \label{compactsetscontainedinE}
  \mathcal{K}_X(E) & = \text{ family of compact sets in $X$ contained in } E.
\end{align}
The first relation is called \emph{upper regularity} and the second one,
\emph{lower regularity}.\footnote{Some authors refer to this notion of lower regularity 
as tightness, and define lower regularity in terms of closed sets. In this sense,
any Borel measure on a metrizable space is regular 
\cite[Theorem 12.5]{aliprantisborder}, but not necessarily tight.} 
The Lebesgue extension of a regular Borel measure is also a regular measure.

Now if $K$ is a compact subspace of a locally convex topological vector space $V$
and $E$ is the set of extremal points of $K$ then the Krein-Milman theorem 
\cite{dunfordschwartz} asserts that the closed convex hull $\overline{\text{co}} E$
of $E$ coincides with that of $K$. In the case $X$ is a compact 
metric space and we take $V=\Ccal(X)'$ and $K$ to be the subset $V$ of positive linear
functionals with norm one, then the weak-star topology in $V$ is metrizable, and $K$ is convex and weakly-star compact and is made of the Borel 
probability measures on $X$. The extremal points $E$ of $K$ are precisely the Dirac measures on $X$. The Krein-Milman theorem applies and yields 
that any Borel probability measure on $X$ is the weak-star limit of convex 
combinations of Dirac measures on $X$.

A space that plays an important role in measure theory
is the \emph{Polish space}, which is a separable and complete metrizable space. 
An important property is that \emph{any finite Borel measure on a Polish space is 
regular in the sense above} \cite[Theorem 12.7]{aliprantisborder}. This property will 
be particularly exploited in a forthcoming paper \cite{frtssp2}, which considers 
stationary statistical solutions.

In our case, all the spaces $H$, $V$, $V'$ are Polish,
and so are the bounded, weakly closed subsets of $H$ endowed with the weak
topology, such as $B_H(R)_\rw$, $R>0$.  The space $H_\rw$, however, 
is not Polish. 

\subsection{Time-dependent function spaces}
\label{timedependentfuncionspacessec}

We define some basic ``time-dependent'' function spaces. In what follows, we consider
an arbitrary interval $I$ in $\RR$. First, we consider the 
spaces $\Ccal_\loc(I,H_\rw)$ and $\Ccal_\loc(I,B_H(R)_\rw)$, with $R>0$, endowed with the topology 
of uniform weak convergence on compact intervals in $I$. With this topology, since $H_\rw$
is separable, the space $\Ccal_\loc(I,H_\rw)$ is a separable Hausdorff locally convex topological 
vector space (the proof can be adapted from \cite[Lemma 3.99]{aliprantisborder} using that
the topology is that of uniform convergence on compact subintervals, and using that
bounded sets in $H_\rw$ are metrizable), and $\Ccal_\loc(I,B_H(R)_\rw)$ is a Polish space. In the case $I$ is compact, we write simply $\Ccal(I,H_\rw)$ and $\Ccal(I,B_H(R)_\rw)$.

The topology of $\Ccal_\loc(I,H_\rw)$ can be characterized by a basis of neighborhoods of 
the origin, given by
\begin{equation}
   \Ocal(J,O_\rw) = \left\{\bv\in \Ccal_\loc(I,H_\rw); \; \bv(t)\in O_\rw, \;\forall t\in J\right\},
\end{equation}
where $J$ is a compact subset of $I$ (or just a compact interval in $I$, the generated
topology is the same) and $O_\rw$ is a (weak) neighborhood of the origin in $H_\rw$.

For intervals $J\subset I \subset \RR$, we define the restriction operator given by
\begin{equation}
  \begin{aligned}
    \Pi_J :  \Ccal_\loc(I, H_\rw) & \: \rightarrow \;\Ccal_\loc(J, H_\rw) \\
       \bu & \;\mapsto \;(\Pi_J \bu)(t) = \bu(t), \;\forall t\in J.
  \end{aligned}
\end{equation}
For the sake of notational simplicity, we do not make explicit the dependence
of the operator on $I$. This should be clear in the context. 
These operators are continuous. In case $J$ is closed in $I$, 
$\Pi_J$ is also surjective and open, in the sense of taking an 
open set in $\Ccal_\loc(I,H_\rw)$ into an open set in $\Ccal_\loc(J,H_\rw)$. 

For each interval $I\subset \RR$ and each $t\in I$, we also define the projection operators
\begin{equation}
  \begin{aligned}
    \Pi_t : \Ccal_\loc(I, H_\rw) & \;\rightarrow \;H_\rw \\
       \bu & \;\mapsto \;\Pi_t \bu = \bu(t),
  \end{aligned}
\end{equation}
which are also continuous and open, as well as surjective.

These operators are crucial for passing from trajectory to phase space and are
also important for the characterization of certain spaces
and for proving some topological properties for them.

For example, given $\bu\in \Ccal_\loc(I,H_\rw)$, we have that $\bu$ is bounded on each compact 
interval $J\subset I$. Hence, given any sequence of numbers $R_k> 0$, 
$R_k\rightarrow \infty$,
and any sequence of compact intervals $J_n\Subset I$, $I=\cup_n J_n$, we can write 
\[ \Ccal_\loc(I,H_\rw)=\bigcap_n \bigcup_k \Pi_{J_n}^{-1}\Ccal(J_n,B_H(R_k)_\rw).
\]

The bounded sets in $\Ccal_\loc(I,H_\rw)$ can be characterized as the sets $\Bcal$
for which there exists an increasing sequence of compact intervals $J_n\Subset I$ with
$I=\cup_n J_n$, and an increasing sequence of numbers $R_n>0$ with 
$R_n\rightarrow \infty$, such that 
\[ \Bcal \subset \bigcap_n \Pi_{J_n}^{-1}\Ccal(J_n,B_H(R_n)).
\]
Any closed bounded set $\Bcal$ endowed with the topology inherited from $\Ccal_\loc(I,H_\rw)$ 
is a Polish space, hence $\Ccal_\loc(I,H_\rw)$ can be termed a ``quasi-Polish'' space
(we call a \emph{quasi-Polish space} any topological vector space such that any closed 
bounded subset is Polish).

The compact subsets of $\Ccal_\loc(I,H_\rw)$ can be characterized as the sets
$\Kcal$ for which for every compact interval $J\subset I$, the subset $\Pi_J\Kcal$ 
is equi-bounded with respect to the norm of $H$, i.e. there
exists $R>0$ such that $|\bu(t)|_{L^2}\leq R$ for all $\bu\in \Kcal$ and all $t\in J$, and 
$\Pi_J\Kcal$ is equicontinuous with respect to the uniform structure of 
$\Ccal(J,H_\rw)$, i.e. for any (weakly) open neighborhood $O_\rw$ of the origin in $H_\rw$,
there exists $\delta>0$ such that $\bu(t_2)-\bu(t_1)\in O_\rw$ for every $t_1,t_2\in J$
with $|t_2-t_1|<\delta$.

\subsection{Trajectory spaces}
\label{trajectoryspaces}

We now consider some spaces of weak solutions and study their topological properties. 
In what follows, we consider $R\geq R_0$, where $R_0$ is given by \eqref{defR0}, 
and intervals $I\subset \RR$, and denote by 
$I^\circ$ the interior of $I$. 

We define the spaces
\begin{align}
  \Ucal_I
      & = \{ \bu\in \Ccal_\loc(I,H_\rw); \;\bu \text{ is a weak solution on } I\}, \\
  \Ucal_I(R)
      & = \left\{ \bu\in \Ccal_\loc(I,B_H(R)_\rw); \;\bu \text{ is a weak solution on } I
         \right\}, \\
  \Ucal_I^\sharp
      & = \{ \bu\in \Ccal_\loc(I,H_\rw); \;\bu \text{ is a weak solution on } I^\circ\}, \\
  \Ucal_I^\sharp(R)
      & = \left\{ \bu\in \Ccal_\loc(I,B_H(R)_\rw); \;\bu \text{ is a weak solution on } 
         I^\circ \right\},
\end{align}
endowed with the topology inherited from $\Ccal_\loc(I,H_\rw)$.

\begin{rmk}
It is straighforward to see that $\Ucal_I\subset\Ucal_I^\sharp$
and $\Ucal_I(R)\subset\Ucal_I^\sharp(R)$. When
$I$ is open on the left, then actually $\Ucal_I=\Ucal_I^\sharp$ 
and $\Ucal_I(R)=\Ucal_I^\sharp(R)$. 
The main difference between the spaces $\Ucal_I$ and $\Ucal_I^\sharp$ 
(and between $\Ucal_I(R)$ and $\Ucal_I^\sharp(R)$) is in the case in which 
$I$ is closed and bounded on the left, for which the solutions in $\Ucal_I$ 
are necessarily strongly continuous from the right at the left end point of $I$,
while the solutions in $\Ucal_I^\sharp$ are weakly continuous but not necessarily
strongly continuous. The space $\Ucal_I^\sharp$ is in fact the sequential
closure of $\Ucal_I$. In the bounded case, since $\Ucal_I^\sharp(R)$ is metrizable and
complete (see Proposition \ref{Ucalspacesprop}), we have that this space is in fact the 
closure of $\Ucal_I(R)$. 

In order to illustrate that the inclusions could be strict when $I$ is closed and bounded on 
the left, or at least that we cannot prove equality, consider the case in which $I=[0,T)$. 
Let $\bu=\bu_n$ be a sequence of weak solutions of \eqref{nseeq} on $I=[0,T)$ with
initial data $\bu(0)=\bu_{0n}$. Assume that,
as $n\rightarrow \infty$, $\bu_{0n}$
converges weakly to $\bu_0$ in $H$. Then, it is not difficult to deduce (similarly to
Lemma \ref{convergenceofsolutions} or see the existence results in 
\cite{constantinfoias,fmrt2001a,lady63,temam,temam3}) that a 
subsequence of $\bu_n$ converges to a limit $\bu$ in $L^2(0,T;V)$ weakly and
in $L^\infty(0,T;H)$ weak-star and that $\bu$ is a weak solution on $(0,T)$
and not necessarily on $[0,T)$; that is we are not able to prove 
\eqref{energyinequalityintegralform} for $t'=0$. The sequence $\{\bu_n\}_n$ belongs
to $\Ucal_I$ but we can only guarantee that the limit point $\bu$ belongs to
$\Ucal_I^\sharp$.
\qed
\end{rmk}

We start with the following result.

\begin{lem}
  Let $I\subset \RR$ be an arbitrary interval and $R\geq R_0$.
  Then, $\Ucal_I(R)$ and $\Ucal_I^\sharp(R)$ are not empty.
\end{lem}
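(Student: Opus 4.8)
The plan is to exhibit a single explicit element in each of the two sets, which by the inclusion $\Ucal_I(R)\subset\Ucal_I^\sharp(R)$ will in fact suffice to handle both simultaneously; indeed it is enough to produce one weak solution on $I$ (in the full Leray--Hopf sense of Definition~\ref{deflerayhopfweaksolution}) whose trajectory stays inside $B_H(R)_\rw$, since such a solution automatically lies in $\Ucal_I(R)$ and hence in $\Ucal_I^\sharp(R)$. The natural candidate is the solution emanating from the zero initial condition at the left end point of $I$ (or, when $I$ has no left end point, from $\bu(t_0)=0$ at some interior point $t_0$ together with a backward extension).

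First I would dispose of the case in which $I$ is closed and bounded on the left, with left end point $t_0$. By the existence theorem recalled at the end of Section~\ref{NSEsettingsec} (see \cite{constantinfoias,fmrt2001a,lady63,temam}), there is a global Leray--Hopf weak solution $\bu$ on $[t_0,\infty)$ with $\bu(t_0)=0$; restricting it to $I$ gives an element of $\Ucal_I$. Since $0\in B_H(R)$ and $R\geq R_0$, the invariance property \eqref{invarianceBR} gives $\bu(t)\in B_H(R)$ for all $t\geq t_0$, so in fact $\bu\in\Ucal_I(R)$, and $\Ucal_I(R)\neq\emptyset$; a fortiori $\Ucal_I^\sharp(R)\neq\emptyset$.

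Next I would treat the case in which $I$ is open on the left (including $I$ unbounded on the left, and the degenerate possibilities), so that $\Ucal_I=\Ucal_I^\sharp$ and $\Ucal_I(R)=\Ucal_I^\sharp(R)$ by the Remark preceding the statement. Pick any $t_0\in I^\circ$ and let $\bu^{(2)}$ be a global weak solution on $[t_0,\infty)$ with $\bu^{(2)}(t_0)=0$, as above; by \eqref{invarianceBR} its trajectory lies in $B_H(R)$. On the left of $t_0$ one cannot in general solve the Navier--Stokes equations, but one does not need to: simply set $\bu^{(1)}(t)\equiv 0$ on $(\inf I,\,t_0]$, which is trivially a weak solution there (all the conditions of Definition~\ref{deflerayhopfweaksolution} hold for the constant function $0$ provided $\bbf$ contributes nonnegatively to the energy inequality, which it does since $(\bbf(s),0)_{L^2}=0\geq -\nu\int\|0\|_{H^1}^2$, and strong continuity is automatic), with values in $B_H(R)$. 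Since $\bu^{(1)}(t_0)=0=\bu^{(2)}(t_0)$, the Pasting Lemma~\ref{pastinglemma} yields a weak solution on all of $I^\circ$, hence on $I$ since $I$ is open on the left; its trajectory lies in $B_H(R)_\rw$, so it belongs to $\Ucal_I(R)=\Ucal_I^\sharp(R)$, which is therefore nonempty.

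The only point requiring a little care — and the main (modest) obstacle — is the verification that the constant function $\bu\equiv 0$ is genuinely a Leray--Hopf weak solution on a left-open interval in the precise sense of Definition~\ref{deflerayhopfweaksolution}, in particular that it satisfies the energy inequality \eqref{energyinequalityintegralform} with every $t'$ admissible; this is immediate because both sides reduce to $0$, using $(\bbf(s),0)_{L^2}=0$. Everything else — membership in $L^\infty_\loc\cap L^2_\loc$, weak continuity, the functional equation, the initial continuity clause — is trivial for the zero function. With this observation in place the two cases above cover every interval $I\subset\RR$, and the proof is complete. $\square$
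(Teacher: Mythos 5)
Your first case (where $I$ is closed and bounded on the left) is correct and is essentially the paper's argument: start from an initial datum in $B_H(R)$ at the left end point, use the global existence theorem and the invariance property \eqref{invarianceBR}, and restrict to $I$.

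The second case contains a genuine gap. You claim that the constant function $\bu^{(1)}\equiv 0$ is a Leray--Hopf weak solution on $(\inf I,\,t_0]$, and you verify the energy inequality; but you overlook condition \eqref{lhfnse} of Definition \ref{deflerayhopfweaksolution}. The zero function satisfies the functional equation \eqref{nseeq} only if $\bbf=0$: plugging $\bu\equiv 0$ into the weak formulation gives $0=\int_s^t(\bbf(\tau),\bv)_{L^2}\,\rd\tau$ for all $\bv\in V$ and all $s,t$, which forces $\bbf=0$ a.e. Since the standing hypothesis is only $\bbf\in L^\infty(I,H)$, your candidate is not a weak solution on the left portion of the interval, and the Pasting Lemma cannot be applied. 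There is no easy repair along these lines, because the Navier--Stokes equations cannot in general be solved backward in time from $t_0$, and when $\inf I=-\infty$ no forward-solving trick reaches the whole interval. The paper instead handles the left-open case by taking a sequence of initial times $t_{0n}\rightarrow\inf I$, solving forward from a fixed datum in $B_H(R)$ on each $[t_{0n},\infty)$, and extracting a limit via the Compactness Lemma \ref{convergenceofsolutions} together with a diagonalization; the limit is a weak solution on the interior of $I$ with trajectory in $B_H(R)$, which suffices because $\Ucal_I=\Ucal_I^\sharp$ for $I$ open on the left. You should replace your pasting construction with this limiting argument.
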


\begin{proof}
  First, since $\Ucal_I(R)\subset\Ucal_I^\sharp(R)$, it suffices to show that $\Ucal_I(R)$
  is not empty. This amounts to showing that there exists a Leray-Hopf weak solution
  on $I$ with $\bu(t)\in B_H(R)$ for all $t\in I$. If $I$ is closed and bounded on the left
  with left end point $t_0$, then we take an arbitrary initial condition $\bu_0\in B_H(R)$,
  use the well-known existence result (see Section \ref{NSEsettingsec}) to obtain a global 
  Leray-Hopf weak solution $\bu$ on $[t_0,\infty)$, use the energy equation and,
  in particular, the invariance property \eqref{invarianceBR} to deduce that $\bu(t)\in B_H(R)$
  for all $t\geq t_0$ and finally restrict $\bu$ to $I$ to have an element in $\Ucal_I(R)$.
  
  If $I$ is open on the left with left end point $t_0\geq -\infty$, then we take a sequence of 
  initial times $t_{0n}\rightarrow t_0$ and take a fixed 
  initial condition $\bu_0\in B_H(R)$ (or even a sequence of initial conditions provided
  they are all within $B_H(R)$). Then, we construct a sequence of weak solutions $\bu_n$,
  each of them in $\Ucal_{[t_{0n},\infty)}(R)$. Using Lemma \ref{convergenceofsolutions}
  and a diagonalization process, we find a subsequence converging to a weak solution 
  $\bu$ on $(t_0,\infty)$, with $\bu(t)\in B_H(R)$, for all $t>t_0$.
  Restricting this solution to $I$ yields an element in $\Ucal_I(R)$.
\end{proof}

\renewcommand{\theenumi}{\arabic{enumi}}
\begin{prop}
  \label{Ucalspacesprop}
  Let $I\subset \RR$ be an arbitrary interval and let $R\geq R_0$. Then,
  \begin{enumerate}
    \item \label{uandusharplctvs} 
      The spaces $\Ucal_I$ and $\Ucal_I^\sharp$ are separable Hausdorff 
      spaces;
    \item \label{urseparablemetrizable} 
      The space $\Ucal_I(R)$ is a separable metrizable space;
    \item \label{uisharppolish} The space $\Ucal_I^\sharp(R)$ is a Polish space.
  \end{enumerate}
\end{prop}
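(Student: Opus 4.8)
The plan is to reduce each assertion to the corresponding statement about $\Ccal_\loc(I,H_\rw)$ (or $\Ccal_\loc(I,B_H(R)_\rw)$) established in Section \ref{timedependentfuncionspacessec}, together with the Compactness Lemma (Lemma \ref{convergenceofsolutions}). The key observation is that $\Ucal_I$, $\Ucal_I^\sharp$, $\Ucal_I(R)$, $\Ucal_I^\sharp(R)$ are all \emph{subspaces} of the ambient spaces $\Ccal_\loc(I,H_\rw)$ or $\Ccal_\loc(I,B_H(R)_\rw)$ with the inherited topology, so separability, metrizability, and the Hausdorff property are automatically inherited; the only nontrivial point is completeness of $\Ucal_I^\sharp(R)$ in assertion \eqref{uisharppolish}, which amounts to showing that $\Ucal_I^\sharp(R)$ is a \emph{closed} subset of the Polish space $\Ccal_\loc(I,B_H(R)_\rw)$.

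First I would dispatch \eqref{uandusharplctvs}: since $\Ccal_\loc(I,H_\rw)$ is a separable Hausdorff (locally convex topological vector) space, and subspaces of separable metrizable-on-bounded-sets Hausdorff spaces are Hausdorff and separable (separability passes to subspaces here because $\Ccal_\loc(I,H_\rw)$ is, on each bounded piece, second countable, via the quasi-Polish structure noted in the excerpt), both $\Ucal_I$ and $\Ucal_I^\sharp$ inherit these properties directly. Next, for \eqref{urseparablemetrizable}, $\Ccal_\loc(I,B_H(R)_\rw)$ is Polish, hence separable metrizable, and $\Ucal_I(R)$ is a subset of it with the inherited topology, so it is separable metrizable; no closedness is needed here.

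The substance is \eqref{uisharppolish}. Since $\Ccal_\loc(I,B_H(R)_\rw)$ is Polish and a closed subspace of a Polish space is Polish, it suffices to prove that $\Ucal_I^\sharp(R)$ is closed in $\Ccal_\loc(I,B_H(R)_\rw)$. Equivalently, because this ambient space is metrizable, it suffices to show that $\Ucal_I^\sharp(R)$ is sequentially closed. So let $\bu_j \in \Ucal_I^\sharp(R)$ with $\bu_j \to \bu$ in $\Ccal_\loc(I,B_H(R)_\rw)$, i.e. $\bu_j \to \bu$ in $H_\rw$ uniformly on compact subintervals; I must show $\bu \in \Ucal_I^\sharp(R)$, namely that $\bu$ is a weak solution on $I^\circ$ with values in $B_H(R)$. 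The bound $\bu(t)\in B_H(R)$ for all $t\in I$ is immediate from weak convergence and the fact that $B_H(R)$ is weakly closed. To see $\bu$ is a weak solution on $I^\circ$: the sequence $\{\bu_j\}$ restricted to $I^\circ$ is a sequence of weak solutions on $I^\circ$ uniformly bounded in $H$ (by $R$), so Lemma \ref{convergenceofsolutions} yields a subsequence converging in $H_\rw$, uniformly on compact subintervals of $I^\circ$, to \emph{some} weak solution $\bv$ on $I^\circ$. But the full sequence $\bu_j$ already converges to $\bu$ in the same topology, so by uniqueness of limits in the Hausdorff space $\Ccal_\loc(I^\circ,H_\rw)$ we get $\bu|_{I^\circ} = \bv$, hence $\bu$ is a weak solution on $I^\circ$. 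This gives $\bu \in \Ucal_I^\sharp(R)$, so $\Ucal_I^\sharp(R)$ is closed, hence Polish.

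The main obstacle is purely the closedness argument in \eqref{uisharppolish}, and within it the one delicate point is that Lemma \ref{convergenceofsolutions} only furnishes a \emph{subsequential} limit which is \emph{a priori} just \emph{some} weak solution; the resolution, as indicated, is to invoke uniqueness of limits in the Hausdorff topology of $\Ccal_\loc(I^\circ,H_\rw)$ to identify that limit with $\bu$. One should also double-check the harmless-but-necessary point that convergence in $\Ccal_\loc(I,B_H(R)_\rw)$ restricts to convergence in $\Ccal_\loc(I^\circ,H_\rw)$ (true since the restriction operator $\Pi_{I^\circ}$ is continuous and $B_H(R)_\rw \hookrightarrow H_\rw$ is a topological embedding), and that in assertion \eqref{uandusharplctvs} we are not claiming $\Ucal_I^\sharp$ is a topological vector space — it is not, being merely a subset — so the phrase ``separable Hausdorff spaces'' is exactly what is proved.
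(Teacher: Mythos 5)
Your proposal is correct and follows essentially the same route as the paper: assertions (\ref{uandusharplctvs}) and (\ref{urseparablemetrizable}) are inherited from the ambient spaces, and the only substantive point is the completeness of $\Ucal_I^\sharp(R)$, which the paper likewise reduces (via metrizability) to a sequential argument resting on Lemma \ref{convergenceofsolutions}. Your version merely spells out the details the paper leaves implicit, including the correct identification of the subsequential limit with $\bu$ by uniqueness of limits in the Hausdorff topology.
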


\begin{proof}
  Properties \eqref{uandusharplctvs} and \eqref{urseparablemetrizable} 
  are not difficult to check. The only step which is not immediate 
  is the completeness of $\Ucal_I^\sharp(R)$. Since this space is metrizable, it suffices
  to check sequential completeness. The sequential completeness follows
  then from Lemma \ref{convergenceofsolutions}.
\end{proof}
\medskip

The Leray-Hopf weak solutions belong to $\Ucal_I$, 
so this is the natural space to consider, but the larger space $\Ucal_I^\sharp$ is 
needed because each space $\Ucal_I^\sharp(R)$ is compact as we will prove 
below\footnote{In \cite{fmrt2001a} 
we said that $\Ucal_I(R)$ is complete, which might not be true. Therefore, 
in some proofs in \cite{fmrt2001a}, one has to replace $\Ucal_I(R)$ with 
$\Ucal_I^\sharp(R)$, as it is done in the present paper.}.
Nevertheless, we show that $\Ucal_I$ and $\Ucal_I(R)$ are at least Borel subsets 
of $\Ccal_\loc(I,H_\rw)$ and $\Ccal_\loc(I,B_H(R)_\rw)$, respectively. 
  
\begin{prop}
  \label{ucalitildercompact}
  Let $I\subset\RR$ be an arbitrary interval and let $R\geq R_0$. Then
  $\Ucal_I^\sharp(R)$ is compact in $\Ccal_\loc(I,B_H(R)_\rw)$ and, hence, it is 
  a compact metric space, and it is compactly embedded
  in $L_\loc^2(I;H)$.
\end{prop}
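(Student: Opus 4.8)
**The plan is to establish compactness of $\Ucal_I^\sharp(R)$ via the characterization of compact subsets of $\Ccal_\loc(I,H_\rw)$ given in Section \ref{timedependentfuncionspacessec}**, and then deduce the remaining assertions. Recall that a subset $\Kcal\subset\Ccal_\loc(I,H_\rw)$ is compact iff it is closed and, for every compact interval $J\subset I$, the family $\Pi_J\Kcal$ is equibounded in the norm of $H$ and equicontinuous with respect to the uniform structure of $\Ccal(J,H_\rw)$. First I would note that $\Ucal_I^\sharp(R)\subset\Ccal_\loc(I,B_H(R)_\rw)$ is automatically equibounded (by $R$) on every compact subinterval, since every element takes values in $B_H(R)$. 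By Proposition \ref{Ucalspacesprop}\eqref{uisharppolish} the space $\Ucal_I^\sharp(R)$ is Polish, hence metrizable, so compactness is equivalent to sequential compactness. But sequential compactness is immediate from Lemma \ref{convergenceofsolutions}: given any sequence in $\Ucal_I^\sharp(R)$, each element is a weak solution on $I^\circ$ uniformly bounded in $H$ (by $R$), so a subsequence converges in $H_\rw$ uniformly on compact subintervals of $I^\circ$ to a weak solution $\bu$ on $I^\circ$; one then checks that $\bu$ is weakly continuous on all of $I$ (including endpoints, using weak sequential compactness of $B_H(R)$ and the uniform weak convergence up to the endpoint) with values in $B_H(R)$, so that $\bu\in\Ucal_I^\sharp(R)$, and that the convergence is uniform on compact intervals of $I$, i.e. in the topology of $\Ccal_\loc(I,H_\rw)$.

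Once compactness in $\Ccal_\loc(I,B_H(R)_\rw)$ is established, the statement that $\Ucal_I^\sharp(R)$ is a compact metric space follows immediately, since we already know it is metrizable (Proposition \ref{Ucalspacesprop}). For the compact embedding into $L_\loc^2(I;H)$, I would argue as follows. The identity map $\Ucal_I^\sharp(R)\to L_\loc^2(I;H)$ is well defined since every weak solution lies in $L_\loc^2(I;V)\subset L_\loc^2(I;H)$ and is bounded by $R$ in $L^\infty(I;H)$. To see it is continuous, take a sequence $\bu_j\to\bu$ in $\Ucal_I^\sharp(R)$; by compactness every subsequence has a further subsequence converging in $L_\loc^2(I;H)$ (using the uniform enstrophy bound \eqref{L2enstrophyestimatefinh} on subintervals bounded away from the left endpoint, which gives a uniform bound in $L^2(J;V)$, together with the uniform bound $\partial\bu_j/\partial t$ in $L_\loc^{4/3}(I;V')$ from Definition \ref{deflerayhopfweaksolution}\eqref{lhutinvprime}, and the Aubin–Lions compactness lemma since $V\hookrightarrow\hookrightarrow H\hookrightarrow V'$), and this $L_\loc^2(I;H)$-limit must coincide with $\bu$ by uniqueness of the $\Ccal_\loc(I,H_\rw)$-limit. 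Hence $\bu_j\to\bu$ in $L_\loc^2(I;H)$. Since a continuous image of a compact set is compact, $\Ucal_I^\sharp(R)$ is compact, hence compactly embedded, in $L_\loc^2(I;H)$.

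The main obstacle, I expect, is the careful treatment of the left endpoint when $I$ is closed and bounded on the left. At the endpoint $t_0$, the limit solution $\bu$ need only be weakly continuous (not strongly), so one must verify that the uniform weak convergence on compact subintervals, which a priori is only guaranteed on compact subintervals of the \emph{open} interval $I^\circ$ by Lemma \ref{convergenceofsolutions}, actually extends up to $t_0$. This is where one uses that the limit candidate takes values in the fixed ball $B_H(R)$ and that the convergence in $\Ccal_\loc(I,B_H(R)_\rw)$ is being tested against the metric on that ball; a diagonal/equicontinuity argument, combined with the weak compactness of $B_H(R)$ and the energy estimate \eqref{energyestimate} (valid at interior points of strong continuity accumulating at $t_0$), shows that $\Pi_J\Ucal_I^\sharp(R)$ is equicontinuous on any compact $J\subset I$ including $t_0$, closing the argument. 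Also in the $L_\loc^2(I;H)$ part, the enstrophy bound \eqref{L2enstrophyestimatefinh} degenerates as the left endpoint is approached, so the Aubin–Lions argument must be run on $[t_0+\delta,t_1]$ for each $\delta>0$ and then combined with the uniform $L^\infty(I;H)$ bound on the thin slab $[t_0,t_0+\delta]$ to control the $L^2(I;H)$ norm there by $R\sqrt{\delta}\to 0$.
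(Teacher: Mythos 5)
Your argument is correct and rests on the same ingredients as the paper's proof --- the uniform a~priori bounds in $L_\loc^2(I;V)$ and on $\partial\bu/\partial t$ in $L_\loc^{4/3}(I;V')$, the Aubin compactness theorem for the $L_\loc^2(I;H)$ embedding, and Arzel\`a--Ascoli for compactness in $\Ccal_\loc(I,H_\rw)$ --- but packages them differently. The paper applies Aubin and Arzel\`a--Ascoli directly to $\Ucal_I^\sharp(R)$ on the whole interval $I$ (the a~priori bounds being uniform up to a left end point $t_0$), obtains relative compactness in both target spaces, and concludes using the closedness of $\Ucal_I^\sharp(R)$ from Proposition \ref{Ucalspacesprop}; you instead pass through sequential compactness via Lemma \ref{convergenceofsolutions}, which forces you to patch the left end point by hand since that lemma is stated on open intervals. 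Your end point discussion is the right idea (and the same issue is implicitly present in the completeness claim of Proposition \ref{Ucalspacesprop}), but one premise in it is overly pessimistic: the enstrophy bound \eqref{L2enstrophyestimatefinh} does \emph{not} degenerate at the left end point for elements of $\Ucal_I^\sharp(R)$. Letting $t'\rightarrow t_0^+$ along points of strong right-continuity and using $|\bu(t')|_{L^2}\leq R$ gives
\begin{equation*}
  \nu\int_{t_0}^{t}\|\bu(s)\|_{H^1}^2\;\rd s\leq R^2+\frac{1}{\nu\lambda_1}\|\bbf\|_{L^\infty(I;H)}^2(t-t_0)
\end{equation*}
uniformly over $\Ucal_I^\sharp(R)$, hence the $L^2(J;V)$ and $L^{4/3}(J;V')$ bounds are uniform on every compact $J\subset I$ containing $t_0$. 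This yields the weak equicontinuity at $t_0$ directly, via $|(\bu(t)-\bu(s),\bv)_{L^2}|\leq \|\bu'\|_{L^{4/3}(J;V')}\|\bv\|_{H^1}(t-s)^{1/4}$ for $\bv\in V$ together with the uniform bound by $R$ and density of $V$ in $H$, and it lets the Aubin argument run on all of $J$; the $\delta$-slab decomposition in your last sentence is therefore unnecessary, though not wrong.
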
  

\begin{proof}
  The classical a~priori estimates for the weak solutions
  (see e.g. \cite{constantinfoias,fmrt2001a,lady63,temam}) yield that
  \begin{align*}
    \text{(1)} & \text{ a bounded subset of } \Ucal_I^\sharp
        \text{ is bounded in } L_\loc^2(I;V), \\
     \text{(2)} & \left\{ \ddt{\bu}; \;\bu\in \text{bounded subset of } \Ucal_I^\sharp
        \right\} \text{ is bounded in } L_\loc^{4/3}(I;V').
  \end{align*}
  Therefore, $\Ucal_I^\sharp(R)$ is bounded in $\Lcal_I$, where
  \[ \Lcal_I = \{ \bu \in L_\loc^2(I; V); \;\bu'\in L_\loc^{4/3}(I;V')\},
  \]
  endowed with the natural metric. Notice that $\Lcal_I$ is a separable 
  Fr\'echet space, hence also a Polish space. Moreover, by the
  Aubin Compactness Theorem, $\Lcal_I$ is compactly embedded in
  $L^2_\loc(I;H)$. Therefore, $\Ucal_I^\sharp(R)$ is relatively compact in $L_\loc^2(I;H)$.

  By the Arzela-Ascoli Theorem, the sets $\Bcal = \{\bu\in \Bcal_1; \bu'\in \Bcal_2\}$,
  where $\Bcal_1$ is any bounded set in $\Ccal_\loc(I;H_\rw)$ and $\Bcal_2$ is
  any bounded set in $L_\loc^{4/3}(I;V')$, are relatively compact in $\Ccal_\loc(I;H_\rw)$.
  From this we infer that $\Ucal_I^\sharp(R)$ is relatively compact in $\Ccal_\loc(I;H_\rw)$.

  Since  $\Ucal_I^\sharp(R)$ is closed and included in $\Ccal_\loc(I,B_H(R)_\rw)$, we deduce that 
  $\Ucal_I^\sharp(R)$ is compact in $\Ccal_\loc(I,B_H(R)_\rw)$ and, hence, it is a compact 
  metrizable space with respect to the topology inherited from $\Ccal_\loc(I;H_\rw)$. It is also
  compactly embedded in $L_\loc^2(I;H)$.
\end{proof}
\medskip
  
The next result shows, in particular, that, in case $I$ is an interval open on the left, then $\Ucal_I$ and $\Ucal_I^\sharp$ are Borel $\Fcal_{\sigma\delta}$ sets in $\Ccal_\loc(I,H_\rw)$, i.e. they are countable intersections of countable unions of closed sets in $\Ccal_\loc(I,H_\rw)$.
  
\begin{prop}
  Let $I\subset \RR$ be an interval open on the left. Then, for any sequence
  $\{R_k\}_{k\in \NN}$ of positive numbers with $R_k\geq R_0$ and 
  $R_k\rightarrow \infty$ and for any sequence $\{J_n\}_{n\in \NN}$ of compact intervals 
  in $I$ with $I=\cup_n J_n$, we have the characterization
  \begin{equation}
    \label{characucaliopen}
    \Ucal_I =\Ucal_I^\sharp=\bigcap_n \bigcup_k\Pi_{J_n}^{-1}\Ucal_{J_n}^\sharp(R_k).
  \end{equation}
  In particular, $\Ucal_I$ and $\Ucal_I^\sharp$ are Borel $\Fcal_{\sigma\delta}$ sets in 
  $\Ccal_\loc(I,H_\rw)$. 
\end{prop}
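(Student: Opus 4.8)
The plan is to prove the displayed set identity $\Ucal_I=\Ucal_I^\sharp=\bigcap_n\bigcup_k\Pi_{J_n}^{-1}\Ucal_{J_n}^\sharp(R_k)$ by two inclusions and then read off the topological statement. The first equality $\Ucal_I=\Ucal_I^\sharp$ is the one already recorded in the remark preceding the proposition (it uses that $I$ is open on the left), so only the identification with the right-hand side must be argued. Granting it, the $\Fcal_{\sigma\delta}$ assertion is immediate: by Proposition~\ref{ucalitildercompact} each $\Ucal_{J_n}^\sharp(R_k)$ is compact in $\Ccal_\loc(J_n,B_H(R_k)_\rw)$, hence compact and so closed in $\Ccal_\loc(J_n,H_\rw)$; since the restriction map $\Pi_{J_n}$ is continuous, each $\Pi_{J_n}^{-1}\Ucal_{J_n}^\sharp(R_k)$ is closed in $\Ccal_\loc(I,H_\rw)$, so $\bigcup_k\Pi_{J_n}^{-1}\Ucal_{J_n}^\sharp(R_k)$ is $\Fcal_\sigma$ and the intersection over $n$ is $\Fcal_{\sigma\delta}$. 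The inclusion $\subseteq$ is routine: for $\bu\in\Ucal_I^\sharp$ and fixed $n$ one has $J_n^\circ\subseteq I^\circ$, so $\bu|_{J_n}$ inherits the defining properties of a weak solution on $J_n^\circ$ (the strong right-continuity condition at a left endpoint being vacuous for the open interval $J_n^\circ$), and since $\bu$ is weakly continuous on the compact set $J_n$ the uniform boundedness principle gives $\sup_{t\in J_n}|\bu(t)|_{L^2}<\infty$, so $\bu|_{J_n}\in\Ucal_{J_n}^\sharp(R_k)$ once $R_k$ exceeds this supremum.

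For the reverse inclusion, take $\bu$ in the right-hand side, so $\bu\in\Ccal_\loc(I,H_\rw)$ and $\bu|_{J_n}$ is a weak solution on $J_n^\circ$ for every $n$; the task is to promote this to the statement that $\bu$ is a weak solution on $I^\circ$. If the $J_n$ were nested (which is the case one uses in practice) this would be immediate, since then $\bigcup_n J_n^\circ=I^\circ$ and every defining property of a weak solution passes to an increasing union of open intervals (a compact subinterval of $I^\circ$ sits inside a single $J_n^\circ$, and a point of strong right-continuity together with the energy inequality issued from it are local notions). For a general sequence $\{J_n\}$, the open set $U:=\bigcup_n J_n^\circ$ still omits only countably many points of $I^\circ$ — a point of $I^\circ\setminus U$ lies in some $J_n$ without being interior to it, hence is one of its two endpoints. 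I would first check that $\bu$ is a weak solution on each connected component of $U$: the pieces $J_n^\circ$ lying in a given component can be chained by overlaps of positive measure, and two weak solutions agreeing on an overlapping open interval splice, the energy inequality being handled by inserting an intermediate point of strong right-continuity (these have full measure) and adding the two inequalities. It then remains to splice across the countably many points $p\in I^\circ\setminus U$. For such a $p$, the restriction of $\bu$ to the part of $U$ immediately to its left, together with $p$, is a weak solution on the corresponding half-open interval ending at $p$ (the energy inequality extends to the endpoint $p$ by monotone convergence and the weak lower semicontinuity of the norm), $\bu$ is weakly continuous at $p$, and one similarly controls $\bu$ just to the right of $p$; provided $p$ is a point of strong right-continuity of $\bu$, the Pasting Lemma then glues $\bu$ across $p$, and a limiting argument over these countably many points yields that $\bu$ is a weak solution on $I^\circ$.

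The main obstacle is precisely that last proviso: showing that each $p\in I^\circ\setminus U$ is a point of strong right-continuity of $\bu$, equivalently that $\lim_{t\downarrow p}|\bu(t)|_{L^2}^2\le|\bu(p)|_{L^2}^2$ (the limit existing by the energy inequality). This is the same sort of strong-continuity phenomenon that, at a left endpoint, separates $\Ucal_I$ from $\Ucal_I^\sharp$, so it genuinely has to be extracted — from the weak solution lying immediately to the right of $p$, from its a~priori energy bounds and its continuity into $V'$, and from the weak continuity of $\bu$ at $p$ — via the criterion of Lemma~\ref{criterionstrongcontinuityright}. Everything else (the chaining of overlapping pieces, the passages to the limit in the energy and functional inequalities, and the bookkeeping with the topology of $\Ccal_\loc(I,H_\rw)$) is routine.
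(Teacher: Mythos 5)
Your treatment of the inclusion $\Ucal_I^\sharp\subseteq\bigcap_n\bigcup_k\Pi_{J_n}^{-1}\Ucal_{J_n}^\sharp(R_k)$ and of the $\Fcal_{\sigma\delta}$ conclusion is exactly the paper's argument (Banach--Steinhaus for the uniform bound on $J_n$, compactness of $\Ucal_{J_n}^\sharp(R_k)$ from Proposition~\ref{ucalitildercompact} plus continuity of $\Pi_{J_n}$), and for a \emph{nested} exhaustion your reverse inclusion is also complete, since then $\bigcup_n J_n^\circ=I^\circ$ and all the defining conditions, including the energy inequality issued from a point of strong right-continuity, pass to the increasing union. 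The paper, for what it is worth, does not argue the reverse inclusion at all; you are right that for a general covering it is the only nontrivial point.

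The gap is the step you yourself flag and then defer: showing that an exceptional point $p\in I^\circ\setminus\bigcup_n J_n^\circ$ is a point of strong right-continuity of $\bu$. This cannot be ``extracted'' from the a~priori bounds, the continuity into $V'$, and the weak continuity at $p$ via Lemma~\ref{criterionstrongcontinuityright}: that lemma presupposes that $\bu$ is already a weak solution on an interval containing $p$, which is precisely what is to be proved. Concretely, membership in the right-hand side of \eqref{characucaliopen} only tells you that $\bu$ is a Leray--Hopf solution on $(a,p)$ and on $(p,b)$ and weakly continuous at $p$; weak lower semicontinuity gives $|\bu(p)|_{L^2}^2\le E_\pm$, where $E_\pm$ are the one-sided essential limits of $|\bu(t)|_{L^2}^2$ at $p$, but nothing forces $E_+\le E_-$, and if $E_+>E_-$ the energy inequality from any good $t'<p$ fails for $t>p$, so $\bu\notin\Ucal_I$. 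Ruling this out is exactly the question of whether a weakly continuous function that is a Leray--Hopf solution on an open interval $(p,b)$ must be strongly right-continuous at $p$ --- the same unresolved issue that the paper invokes to explain why $\Ucal_{[t_0,T)}$ might be strictly contained in $\Ucal_{[t_0,T)}^\sharp$. So the general-covering version of the identity is not reachable by these means; the correct repair is to prove \eqref{characucaliopen} for an increasing sequence $\{J_n\}$ (which is all that is needed for the Borel $\Fcal_{\sigma\delta}$ conclusion, and is how such exhaustions are used elsewhere in the paper), rather than to try to splice across the exceptional points.
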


\begin{proof}
  Let $\bu\in\Ucal_I=\Ucal_I^\sharp$. Since $\bu$ is weakly continuous, it
  follows from the Banach-Steinhaus theorem that $\bu(t)$ is bounded in $H$
  uniformly for $t$ on any compact interval in $I$. Therefore, for each $n$,
  there exists $k$ sufficiently large such that $\Pi_{J_n}\bu\in \Ucal_{J_n}^\sharp(R_k)$.
  Hence, for each $n$, $\bu$ belongs to $\bigcup_k\Pi_{J_n}^{-1}\Ucal_{J_n}^\sharp(R_k)$.
  This implies \eqref{characucaliopen}.
  
  Since each $\Ucal_I^\sharp(R_k)$ is compact, and $\Pi_J$ is continuous,
  the characterization \eqref{characucaliopen} shows that the spaces are
  Borel $\Fcal_{\sigma\delta}$ sets in $\Ccal_\loc(I,H_\rw)$. 
\end{proof}
\medskip

\begin{prop}
  \label{characbdducal}
  Let $I\subset \RR$ be an interval open on the left. Then, the bounded 
  subsets of $\Ucal_I=\Ucal_I^\sharp$ can be characterized as the sets $\Bcal$
  for which there exists an increasing sequence of compact intervals $J_n\Subset I$ with
  $I=\cup_n J_n$, and an increasing sequence of numbers $R_n\geq R_0$ with 
  $R_n\rightarrow \infty$, 
  such that 
  \[ \Bcal \subset \bigcap_n \Pi_{J_n}^{-1}\Ucal_{J_n}^\sharp(R_n).
  \]
  Moreover, any closed bounded subset of $\Ucal_I=\Ucal_I^\sharp$ is compact, 
  and, thus, $\Ucal_I=\Ucal_I^\sharp$ is quasi-compact, i.e. a space such that every 
  closed bounded subset is compact. 
\end{prop}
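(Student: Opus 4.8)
The plan is to prove the two assertions in turn, relying on the characterization \eqref{characucaliopen} of $\Ucal_I=\Ucal_I^\sharp$ as $\bigcap_n\bigcup_k\Pi_{J_n}^{-1}\Ucal_{J_n}^\sharp(R_k)$ together with the characterization of bounded sets in $\Ccal_\loc(I,H_\rw)$ recalled in Section \ref{timedependentfuncionspacessec}, and on the compactness of each $\Ucal_{J_n}^\sharp(R)$ from Proposition \ref{ucalitildercompact}.

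First I would establish the set-theoretic characterization of the bounded subsets. If $\Bcal$ is contained in $\bigcap_n\Pi_{J_n}^{-1}\Ucal_{J_n}^\sharp(R_n)$ for some exhausting sequence $J_n\Subset I$ and some $R_n\geq R_0$ with $R_n\to\infty$, then in particular every $\bu\in\Bcal$ satisfies $\bu(t)\in B_H(R_n)$ for all $t\in J_n$, so $\Bcal$ is contained in $\bigcap_n\Pi_{J_n}^{-1}\Ccal(J_n,B_H(R_n))$ and hence is a bounded subset of $\Ccal_\loc(I,H_\rw)$ by the characterization recalled earlier; since $\Bcal\subset\Ucal_I$, it is a bounded subset of $\Ucal_I$. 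Conversely, suppose $\Bcal\subset\Ucal_I$ is bounded in $\Ccal_\loc(I,H_\rw)$. By that same characterization there is an exhausting sequence of compact intervals $J_n\Subset I$ and numbers $S_n>0$, $S_n\to\infty$, with $\Bcal\subset\bigcap_n\Pi_{J_n}^{-1}\Ccal(J_n,B_H(S_n))$. Set $R_n=\max\{S_n,R_0\}$, which we may also take increasing after passing to a subsequence of the $J_n$; then for each $\bu\in\Bcal$, $\Pi_{J_n}\bu$ is a weak solution on $J_n^\circ$ (since $\bu$ is a weak solution on $I^\circ\supset J_n^\circ$) that takes values in $B_H(R_n)$ on all of $J_n$, so $\Pi_{J_n}\bu\in\Ucal_{J_n}^\sharp(R_n)$, i.e. $\Bcal\subset\bigcap_n\Pi_{J_n}^{-1}\Ucal_{J_n}^\sharp(R_n)$, as required.

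For the quasi-compactness, let $\Bcal$ be a closed bounded subset of $\Ucal_I=\Ucal_I^\sharp$. By the first part, $\Bcal\subset\bigcap_n\Pi_{J_n}^{-1}\Ucal_{J_n}^\sharp(R_n)=:\Kcal$. Each $\Ucal_{J_n}^\sharp(R_n)$ is compact by Proposition \ref{ucalitildercompact}, and each $\Pi_{J_n}:\Ccal_\loc(I,H_\rw)\to\Ccal_\loc(J_n,H_\rw)$ is continuous, so each $\Pi_{J_n}^{-1}\Ucal_{J_n}^\sharp(R_n)$ is closed in $\Ccal_\loc(I,H_\rw)$; moreover $\Kcal$ is contained in the bounded set $\bigcap_n\Pi_{J_n}^{-1}\Ccal(J_n,B_H(R_n))$. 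To see $\Kcal$ is compact, I would use the criterion for compactness in $\Ccal_\loc(I,H_\rw)$ recalled in Section \ref{timedependentfuncionspacessec}: for each fixed compact interval $J\subset I$, choosing $n$ with $J\subset J_n$, the set $\Pi_J\Kcal$ is contained in $\Pi_J(\Pi_{J_n}^{-1}\Ucal_{J_n}^\sharp(R_n))$, which is equibounded in $H$ (by $R_n$) and equicontinuous in $\Ccal(J,H_\rw)$ because it is the image under restriction of the compact, hence equicontinuous, set $\Ucal_{J_n}^\sharp(R_n)\subset\Ccal(J_n,B_H(R_n)_\rw)$; moreover $\Kcal$ is closed. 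Hence $\Kcal$ is compact, and $\Bcal$, being a closed subset of the compact set $\Kcal$, is compact. Therefore every closed bounded subset of $\Ucal_I=\Ucal_I^\sharp$ is compact, which is the claimed quasi-compactness.

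The main obstacle I anticipate is the bookkeeping needed to pass from the arbitrary exhausting data $(J_n,S_n)$ furnished by the abstract boundedness criterion to a \emph{monotone} pair $(J_n,R_n)$ with $R_n\geq R_0$ that still exhausts $I$ and still dominates the bounds for $\Bcal$; this requires extracting a subsequence of the intervals and interleaving it with a cofinal subsequence of the $J_n$ already in play, while checking that restriction of weak solutions on $I^\circ$ to $J_n^\circ$ genuinely lands in $\Ucal_{J_n}^\sharp(R_n)$ rather than only in $\Ucal_{J_n}(R_n)$ — which is automatic precisely because $\Ucal_{J_n}^\sharp(R_n)$ only demands the solution property on the interior $J_n^\circ$, so no strong continuity at the endpoints of $J_n$ is needed. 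The compactness argument itself is then routine given Proposition \ref{ucalitildercompact} and the stated compactness criterion for $\Ccal_\loc(I,H_\rw)$.
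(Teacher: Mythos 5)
Your proof is correct and follows essentially the route the paper sketches: the characterization of the bounded sets is read off from the corresponding characterization of bounded sets in $\Ccal_\loc(I,H_\rw)$ together with the local nature of the weak-solution property on $I^\circ$, and the compactness rests on Proposition \ref{ucalitildercompact}. The only cosmetic difference is that where the paper invokes a diagonalization process to extract convergent subsequences from the compact sets $\Ucal_{J_n}^\sharp(R_n)$, you verify compactness of $\bigcap_n\Pi_{J_n}^{-1}\Ucal_{J_n}^\sharp(R_n)$ directly via the equiboundedness/equicontinuity criterion for $\Ccal_\loc(I,H_\rw)$ together with closedness; the two arguments are equivalent and use the same ingredients.
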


\begin{proof}
  The characterization is not difficult to see. The compactness of the closed bounded sets
  follows from the compactness of each set $\Ucal_{J_n}^\sharp(R_n)$ and
  by using a diagonalization process.
\end{proof}
\medskip

\begin{prop}
  \label{ucalisharpsigmacompact}
  Let $I\subset\RR$ be an interval closed and bounded on the left. Then for any sequence
  $\{R_k\}_{k\in \NN}$ of positive numbers with $R_k\geq R_0$ and $R_k\rightarrow \infty$, 
  we have the representation
  \begin{equation}
    \label{characucalitildeclosed}
    \Ucal_I^\sharp = \bigcup_k\Ucal_I^\sharp(R_k).
  \end{equation}
  In particular, $\Ucal_I^\sharp$ is $\sigma$-compact in $\Ccal_\loc(I,H_\rw)$, i.e.
  it is a countable union of compact sets in $\Ccal_\loc(I,H_\rw)$. Moreover, any 
  bounded subset of $\Ucal_I^\sharp$ must be included in $\Ucal_I^\sharp(R_k)$
  for $k$ sufficiently large.
\end{prop}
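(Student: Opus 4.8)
The plan is to establish the representation \eqref{characucalitildeclosed} by two inclusions, with the nontrivial direction being $\Ucal_I^\sharp \subset \bigcup_k \Ucal_I^\sharp(R_k)$. Let $\bu \in \Ucal_I^\sharp$, so that $\bu$ is a weak solution on $I^\circ$ that is weakly continuous on all of $I$. Write $t_0$ for the left end point of $I$. The key observation is that, although $t_0$ need not be a point of strong continuity from the right, the energy estimate \eqref{energyestimate} is valid starting from almost every $t' \in I$, and in particular from points $t'$ arbitrarily close to $t_0$; together with the weak lower semicontinuity of the $H$-norm and the fact that $\bu$ is weakly continuous at $t_0$, this will bound $|\bu(t_0)|_{L^2}$ in terms of $\limsup_{t' \to t_0^+} |\bu(t')|_{L^2}$, which in turn is controlled via \eqref{energyestimate} applied on a fixed small subinterval. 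Concretely, I would pick any point $t_1 \in I^\circ$ and a point of strong continuity $t' \in (t_0, t_1)$, apply \eqref{energyestimate} on $[t', t]$ to get $|\bu(t)|_{L^2}^2 \leq |\bu(t')|_{L^2}^2 + R_0^2$ for $t > t'$ (using \eqref{defR0}), and then let $t' \to t_0^+$ through strong-continuity points; weak continuity at $t_0$ gives $|\bu(t_0)|_{L^2} \leq \liminf |\bu(t')|_{L^2}$, so in fact $\bu$ is bounded on $I$ by a single constant depending only on $\bu$ (via its value near $t_0$) and $R_0$. Choosing $k$ with $R_k$ exceeding this constant places $\bu \in \Ucal_I^\sharp(R_k)$.

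More carefully, the uniform bound should be extracted as follows. Since $\bu \in \Ccal_\loc(I, H_\rw)$, the function $\bu$ is in particular weakly bounded, hence norm-bounded, on the compact interval $[t_0, t_1]$ by the Banach--Steinhaus theorem; call this bound $M$. For $t \geq t_1$, apply \eqref{energyestimate} with $t' \in (t_0, t_1)$ a strong-continuity point to obtain $|\bu(t)|_{L^2}^2 \leq |\bu(t')|_{L^2}^2 e^{-\nu\lambda_1(t-t')} + R_0^2 \leq M^2 + R_0^2$. Thus $\sup_{t \in I} |\bu(t)|_{L^2} \leq \sqrt{M^2 + R_0^2} =: \rho(\bu) < \infty$. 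Picking $k$ so large that $R_k \geq \rho(\bu)$ yields $\bu(t) \in B_H(R_k)$ for all $t \in I$, i.e.\ $\bu \in \Ucal_I^\sharp(R_k)$ (the weak-solution property on $I^\circ$ is unchanged). This proves \eqref{characucalitildeclosed}; the reverse inclusion is immediate since each $R_k \geq R_0$. The last sentence of the statement — that any bounded subset of $\Ucal_I^\sharp$ sits inside some $\Ucal_I^\sharp(R_k)$ — then follows from the characterization of bounded sets in $\Ccal_\loc(I, H_\rw)$ recalled in Section \ref{timedependentfuncionspacessec}: a bounded set $\Bcal$ is contained in $\bigcap_n \Pi_{J_n}^{-1}\Ccal(J_n, B_H(R_n'))$ for some $R_n' \to \infty$, but since $I$ is closed and bounded on the left and $I = \cup_n J_n$ with $J_n$ increasing, one may take $J_1 \ni t_0$, and then the above energy estimate propagates the bound $R_1'$ on $J_1$ to a single uniform bound on all of $I$ (again picking up an additive $R_0^2$), so $\Bcal \subset \Ucal_I^\sharp(R_k)$ for $k$ with $R_k$ beyond that uniform bound. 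The $\sigma$-compactness of $\Ucal_I^\sharp$ in $\Ccal_\loc(I, H_\rw)$ is then an immediate consequence of \eqref{characucalitildeclosed} together with the compactness of each $\Ucal_I^\sharp(R_k)$ established in Proposition \ref{ucalitildercompact}.

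The main obstacle is the first step: turning the merely \emph{local} (interior) weak-solution property plus weak continuity at the left end point into a \emph{global}, uniform-in-$t$ norm bound. The subtlety is precisely that condition \eqref{lhinitialcont} may fail at $t_0$ for elements of $\Ucal_I^\sharp$, so one cannot directly invoke \eqref{energyestimate} from $t' = t_0$; the resolution is to run the estimate from interior strong-continuity points $t'$ approaching $t_0$ and absorb the limit using only the weak continuity at $t_0$ (which controls $|\bu(t_0)|_{L^2}$ from below by a liminf) — but note we do not even need to pass to $t_0$ in the limit, since the Banach--Steinhaus bound $M$ on $[t_0, t_1]$ already does the job on the compact piece and \eqref{energyestimate} handles the tail $t \geq t_1$. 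Everything else is bookkeeping with the definitions of the spaces $\Ucal_I^\sharp(R)$ and with the characterization of bounded subsets of $\Ccal_\loc(I, H_\rw)$.
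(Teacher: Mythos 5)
Your proposal is correct and follows essentially the same route as the paper: Banach--Steinhaus gives a norm bound on a compact interval containing the left end point, and the energy estimate \eqref{energyestimate} launched from an interior strong-continuity point propagates this to a uniform bound on all of $I$, after which the representation \eqref{characucalitildeclosed} and the $\sigma$-compactness follow from Proposition \ref{ucalitildercompact}. Your write-up simply fills in the quantitative details (the explicit $M^2+R_0^2$ bound and the treatment of bounded subsets) that the paper leaves to the reader.
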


\begin{proof}
  Let $\bu\in\Ucal_I^\sharp$. Since $\bu$ is weakly continuous on $I$, it follows from the 
  Banach-Steinhauss theorem that $\bu(t)$ is bounded in $H$ for $t$ in any compact interval
  in $I$ containing the left end point of $I$. Then, applying the energy estimate 
  \eqref{energyestimate} starting from any ``good'' point $t'$ in this compact interval, we
  find that $\bu$ is bounded on all $I$. Hence, it must belong to $\Ucal_I^\sharp(R_k)$
  for $k$ sufficiently large. Therefore, \eqref{characucalitildeclosed} holds. Since 
  each $\Ucal_I^\sharp(R_k)$ is compact, we deduce that $\Ucal_I^\sharp$ is
  $\sigma$-compact. The characterization of the bounded sets is easy to see.
\end{proof} 
\medskip

\begin{prop}
  \label{Ucalileftborel}
  Let $I\subset\RR$ be an interval closed and bounded on the left and let $R\geq R_0$. 
  Then, $\Ucal_I$ and $\Ucal_I(R)$ are Borel sets in $\Ccal_\loc(I,H_\rw)$.
  Moreover, for any sequence $\{R_k\}_{k\in \NN}$ of positive numbers 
  with $R_k\rightarrow \infty$, we have the characterization
  \begin{equation}
    \label{characucaliclosed}
    \Ucal_I = \bigcup_k\Ucal_I(R_k).
  \end{equation}
  Furthermore, any bounded subset of $\Ucal_I$ must be included in 
  $\Ucal_I(R_k)$ for $k$ sufficiently large.
\end{prop}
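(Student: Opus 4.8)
The plan is to settle first the characterization \eqref{characucaliclosed} and the statement on bounded subsets, which are elementary and which together reduce the measurability of $\Ucal_I$ to that of the sets $\Ucal_I(R)$. Let $t_0$ be the left end point of $I$. If $\bu\in\Ucal_I$, then by condition \eqref{lhinitialcont} in Definition \ref{deflerayhopfweaksolution} the point $t_0$ is a point of strong continuity from the right of $\bu$, so the energy estimate \eqref{energyestimate} is valid with $t'=t_0$; since $\nu^{-2}\lambda_1^{-2}\|\bbf\|_{L^\infty(I;H)}^2=R_0^2$, this gives $|\bu(t)|_{L^2}^2\le\max\{|\bu(t_0)|_{L^2}^2,R_0^2\}$ for all $t\in I$, hence $\bu\in\Ucal_I(R)$ for $R=\max\{|\bu(t_0)|_{L^2},R_0\}$ and in particular $\bu\in\Ucal_I(R_k)$ once $R_k\ge R$. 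As $\Ucal_I(R_k)\subset\Ucal_I$ always, \eqref{characucaliclosed} follows. If $\Bcal$ is a bounded subset of $\Ucal_I$, then boundedness in $\Ccal_\loc(I,H_\rw)$ forces $M:=\sup_{\bu\in\Bcal}|\bu(t_0)|_{L^2}<\infty$ (this bound at $t_0$ is contained in the characterization of bounded sets recalled in Section \ref{timedependentfuncionspacessec}), and the same estimate yields $\Bcal\subset\Ucal_I(\max\{M,R_0\})\subset\Ucal_I(R_k)$ for $k$ large.

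It remains to prove that $\Ucal_I(R)$ is Borel in $\Ccal_\loc(I,H_\rw)$ for each $R\ge R_0$. The key point is that an element $\bu\in\Ucal_I^\sharp(R)$ --- which is already a weak solution on $I^\circ$ and is weakly continuous on all of $I$ --- belongs to $\Ucal_I(R)$ exactly when it is strongly continuous from the right at $t_0$: indeed, conditions \eqref{lhuinhv}--\eqref{lheineq} of Definition \ref{deflerayhopfweaksolution} hold on $I^\circ$ by hypothesis, and once strong right-continuity at $t_0$ is available they extend up to $t_0$ by letting the base point of the energy inequality tend to $t_0^+$ (so that $\bu\in L^2_\loc(I;V)$ up to $t_0$ and the integral form of \eqref{nseeq} passes to the limit at $t_0$). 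Therefore
\[
  \Ucal_I(R)=\bigl\{\bu\in\Ucal_I^\sharp(R):\;\bu\text{ is strongly continuous from the right at }t_0\bigr\},
\]
and by Lemma \ref{criterionstrongcontinuityright} the bracketed condition is equivalent to $\Phi(\bu)\le|\bu(t_0)|_{L^2}^2$, where $\Phi(\bu):=\liminf_{\tau\to0^+}\tau^{-1}\int_{t_0}^{t_0+\tau}|\bu(s)|_{L^2}^2\,\rd s$.

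To conclude, I would check that both functionals in this inequality are Borel on $\Ucal_I^\sharp(R)$. The map $\bu\mapsto|\bu(t_0)|_{L^2}^2=|\Pi_{t_0}\bu|_{L^2}^2$ is lower semicontinuous on $\Ccal_\loc(I,H_\rw)$, being the composition of the continuous map $\Pi_{t_0}$ with the weakly lower semicontinuous norm of $H$, hence Borel. For $\Phi$, by Proposition \ref{ucalitildercompact} the space $\Ucal_I^\sharp(R)$ is compactly --- in particular continuously --- embedded in $L_\loc^2(I;H)$, so for each fixed $\tau>0$ the map $\bu\mapsto\tau^{-1}\int_{t_0}^{t_0+\tau}|\bu(s)|_{L^2}^2\,\rd s$ is continuous on $\Ucal_I^\sharp(R)$; and since $\tau\mapsto\tau^{-1}\int_{t_0}^{t_0+\tau}|\bu(s)|_{L^2}^2\,\rd s$ is continuous on $(0,\infty)$ for each such $\bu$, one has $\Phi(\bu)=\sup_{n\in\NN}\inf\{\tau^{-1}\int_{t_0}^{t_0+\tau}|\bu(s)|_{L^2}^2\,\rd s:\tau\in\QQ,\,0<\tau<1/n\}$, a countable combination of continuous functions of $\bu$, hence Borel on $\Ucal_I^\sharp(R)$. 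Thus $\Ucal_I(R)=\{\bu\in\Ucal_I^\sharp(R):\Phi(\bu)\le|\bu(t_0)|_{L^2}^2\}$ is Borel in $\Ucal_I^\sharp(R)$; and since $\Ucal_I^\sharp(R)$ is compact (Proposition \ref{ucalitildercompact}) hence closed in the Hausdorff space $\Ccal_\loc(I,H_\rw)$, a Borel subset of it is Borel in $\Ccal_\loc(I,H_\rw)$. Finally $\Ucal_I=\bigcup_k\Ucal_I(R_k)$ is Borel, being a countable union of Borel sets.

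The step that genuinely needs care is the measurability of the time-average functional $\bu\mapsto\tau^{-1}\int_{t_0}^{t_0+\tau}|\bu(s)|_{L^2}^2\,\rd s$, which is precisely why the compact embedding of $\Ucal_I^\sharp(R)$ into $L_\loc^2(I;H)$ from Proposition \ref{ucalitildercompact} is brought in; once that is granted, the reduction of the $\liminf$ defining $\Phi$ to a countable supremum of infima (valid because the time-average is continuous in $\tau$) and the lower semicontinuity of $\bu\mapsto|\bu(t_0)|_{L^2}^2$ make the rest routine. Should one wish to avoid Proposition \ref{ucalitildercompact} here, the same measurability can be obtained by writing $|\bu(s)|_{L^2}^2=\sup_m\bigl(2(\bu(s),\bv_m)_{L^2}-|\bv_m|_{L^2}^2\bigr)$ for a fixed countable dense set $\{\bv_m\}\subset H$ and invoking monotone convergence to present the time-average as an increasing limit of manifestly continuous functionals of $\bu$.
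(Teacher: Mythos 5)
Your proof is correct, but it takes a genuinely different route from the one the paper uses for this proposition. The paper's proof introduces a second, finer topology $\tilde\Tcal$ on $\Ccal_\loc(I,B_H(R)_\rw)$ (uniform weak convergence on compacta plus strong convergence at $t_0$), checks that the two topologies generate the same Borel $\sigma$-algebra (each basic $\tilde\Tcal$-neighborhood is a $\Gcal_\delta$ for the original topology), and then observes that $\Ucal_I(R)$ is \emph{closed} for $\tilde\Tcal$, because strong convergence at $t_0$ lets one pass to the limit in the energy inequality with $t'=t_0$. Your argument instead identifies $\Ucal_I(R)$ inside the compact set $\Ucal_I^\sharp(R)$ as the sublevel set $\{\Phi(\bu)\le|\bu(t_0)|_{L^2}^2\}$ of the liminf of time-averaged kinetic energy, via Lemma \ref{criterionstrongcontinuityright}, and proves Borel measurability of $\Phi$ by reducing the liminf to a countable sup-inf of functionals that are continuous on $\Ucal_I^\sharp(R)$ (using the embedding into $L^2_\loc(I;H)$ from Proposition \ref{ucalitildercompact}, or alternatively the Galerkin/monotone-limit device you mention). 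This is, in substance, the strategy the paper deploys only afterwards, in Lemma \ref{lemphicondition}, Proposition \ref{ucalgdeltainucaltilde} and Corollary \ref{corgdeltasigma}, where it yields the sharper conclusion that $\Ucal_I(R)$ is a $\Gcal_\delta$ in $\Ucal_I^\sharp(R)$ and $\Ucal_I$ a $\Gcal_{\delta\sigma}$ in $\Ucal_I^\sharp$; with a small extra effort (writing the bad set as a countable union of closed sets, as in Proposition \ref{ucalgdeltainucaltilde}) your argument would give that refinement too, whereas the two-topologies trick only gives Borel-ness but is shorter. Your treatment of \eqref{characucaliclosed} and of the bounded subsets via the energy estimate \eqref{energyestimate} at $t'=t_0$ coincides with the paper's. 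Two minor points worth noting: Lemma \ref{criterionstrongcontinuityright} is stated for a weak solution on $I$ at a point $t'\in I$, so its application at $t'=t_0$ to an element of $\Ucal_I^\sharp(R)$ (a weak solution only on $I^\circ$) technically requires re-running its proof, which goes through since only weak continuity at $t_0$ and the energy inequality from interior points are used — the paper makes the same tacit extension in Lemma \ref{lemphicondition}; and conditions \eqref{lhuinhv}--\eqref{lheineq} actually extend to $I$ for every element of $\Ucal_I^\sharp(R)$ (the uniform bound by $R$ already gives $\bu\in L^2_\loc(I;V)$ up to $t_0$), so strong right-continuity at $t_0$ is genuinely the only distinguishing condition, as you assert.
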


\begin{proof}  
  Let $t_0$ denote the left end point of $I$. Denote by $\Tcal$ the topology
  defined earlier for $\Ccal_\loc(I,B_H(R)_\rw)$ and 
  denote by $\tilde\Tcal$ the topology in $\Ccal_\loc(I,B_H(R)_\rw)$ associated
  with the uniform weak convergence on compact intervals in $I$ \emph{and} 
  strong convergence at $t_0$. 
  More precisely, this topology can be characterized by a system of  
  neighborhoods of the origin given by
  \[ \Ncal(J,O_\rw,\varepsilon) = \{\bv\in\Ccal_\loc(I,B_H(R)_\rw); 
         \;\bv(t)\in O_\rw, \;\forall t\in J, \;|\bv(t_0)|_{L^2}\leq \varepsilon,\},
  \]
  where $J\subset I$ is compact, $O_\rw$ is a neighborhood of the origin
  in $B_H(R)_\rw$, and $\varepsilon>0$. This space is metrizable and, in fact, it is
  a Polish space. 
  
  The topology $\tilde\Tcal$ is finer than $\Tcal$, i.e. $\Tcal$ is contained in $\tilde\Tcal$. 
  We claim that the corresponding Borel sets generated by these two topologies 
  are the same. Since $\tilde\Tcal$ is finer than $\Tcal$, it suffices
  to show that every open set in the former topology is a Borel set in the
  latter topology. For that, it suffices to show that every neighborhood 
  $\Ncal(J,O_\rw,\varepsilon)$
  is a Borel set in $\Ccal_\loc(I,B_H(R)_\rw)$. In fact, $\Ncal(J,O_\rw,\varepsilon)$
  is a $\Gcal_\delta$ set in $\Ccal_\loc(I,B_H(R)_\rw)$ since it can be written as
  \[ \Ncal(J,O_\rw,\varepsilon) = \bigcap_{m\in \NN}\left(\Ocal(J,O_\rw)\cap
     \Ocal(\{t_0\},O^{m,\varepsilon}_\rw)\right),
  \]
  where $O^{m,\varepsilon}_\rw=\{\bv\in H; |P_m\bv|_{L^2}< \varepsilon\}$
  and $P_m$ is the 
  Galerkin projector defined in Section \ref{NSEsettingsec}.
  
  Consider now $\Ucal_I(R)$ with respect to the topology $\tilde\Tcal$.
  As such, it is a closed set since the strong convergence at $t_0$
  together with the energy inequality \eqref{energyestimate} with 
  $t'=t_0$ implies that the limit of a sequence of weak solutions still satisfies 
  the energy inequality with $t'=t_0$, from which we deduce that the 
  limit solution is strongly continuous from the right at $t_0$. Note that
  since the topology $\tilde\Tcal$ in $\Ccal_\loc(I,B_H(R)_\rw)$ is also metrizable, 
  it suffices to consider sequences. 

  Now, since $\Ucal_I(R)$ is closed for $\tilde\Tcal$
  it is Borel with respect to $\tilde\Tcal$, hence it is also Borel
  in $\Ccal_\loc(I,B_H(R)_\rw)$.
  
  Now, due to the energy estimate \eqref{energyestimate}, any
  $\bu\in \Ucal_I$ is uniformly bounded on $I$, hence it belongs
  to $\Ucal_I(R_k)$ for sufficiently large $k$. Thus, the characterization
  \eqref{characucaliclosed} follows and implies that $\Ucal_I$ is also Borel.
  
  The characterization of the bounded sets is not difficult to see. 
\end{proof}
\medskip

\subsection{Further topological properties for some trajectory spaces}

We now show an interesting result saying that $\Ucal_I$ is a $\Gcal_{\delta\sigma}$-set 
within $\Ucal_I^\sharp$ (i.e. $\Ucal_I$ is a countable union of sets which are countable intersections of relatively open sets in $\Ucal_I^\sharp$). This result is a refinement of the
result in Proposition \ref{Ucalileftborel}, but the previous one is also presented because 
of its simplicity. We also show, in the case the forcing term is time-independent,
that $\Ucal_I$ is a large set within $\Ucal_I^\sharp$, in the sense of being also dense
in it. First, let us prove the following lemma. 

\renewcommand{\theenumi}{\arabic{enumi}}
\begin{lem}
  \label{lemphicondition}
  Consider a time interval $I$ which is closed and bounded on the left,
  with left end point $t_0$. Then, the function
  \begin{equation}
    \label{definitionofPsi}
    \Psi(\bu,t) = \frac{1}{t-t_0}\int_{t_0}^t \left(|\bu(s)|_{L^2}^2-|\bu(t_0)|_{L^2}^2\right)\;\rd s
  \end{equation}
  is a Borel function in $(\bu,t)\in \Ccal_\loc(I,H_\rw)\times (I\setminus \{t_0\})$.
  Moreover, for each fixed $\bu\in \Ccal_\loc(I,H_\rw)$, the function
  $t\mapsto \Psi(\bu,t)$ is continuous in $t\in I\setminus \{t_0\}$.
  Furthermore, the function 
  $\bu\mapsto\liminf_{t\rightarrow t_0^+} \Psi(\bu,t)$ is a Borel function on 
  $\Ccal_\loc(I,H_\rw)$ with
  \begin{equation}
    \label{liminfpsigeqzero}
       \liminf_{t\rightarrow t_0^+} \Psi(\bu,t) \geq 0, \text{ for all } \bu\in \Ccal_\loc(I,H_\rw).
  \end{equation} 
  Finally, given $\bu\in\Ucal_I^\sharp$, the following statements are equivalent
  \begin{enumerate}
    \item \label{liminfcritpsi} $ \liminf_{t\rightarrow t_0^+} \Psi(\bu,t) = 0,$ \smallskip
    \item \label{uinucalicrit} $\bu\in \Ucal_I$; and \medskip
    \item \label{limcritpsi} $\lim_{t\rightarrow t_0^+} \Psi(\bu,t) = 0$.
  \end{enumerate}
\end{lem}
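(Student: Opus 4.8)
The plan is to establish the five assertions in the stated order; the first three are measure-theoretic, while inequality~\eqref{liminfpsigeqzero} and the triple equivalence rest ultimately on weak lower semicontinuity of the $H$-norm and on the argument in the proof of Lemma~\ref{criterionstrongcontinuityright}. For the Borel measurability of $\Psi$ I would write the integrand through the Galerkin projectors: for each $m\in\NN$ the map $(\bu,s)\mapsto|P_m\bu(s)|_{L^2}^2=\sum_{j=1}^m(\bu(s),\bw_j)_{L^2}^2$ is jointly continuous on $\Ccal_\loc(I,H_\rw)\times I$, since uniform weak convergence on compact intervals controls the $\bu$-variable and weak continuity of each trajectory controls the $s$-variable; hence $(\bu,t)\mapsto\int_{t_0}^t|P_m\bu(s)|_{L^2}^2\;\rd s$ is jointly continuous as well (uniform convergence of the integrands on compact intervals). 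Since $|P_m\bu(s)|_{L^2}^2\nearrow|\bu(s)|_{L^2}^2$ for each fixed $(\bu,s)$, monotone convergence shows that $(\bu,t)\mapsto\int_{t_0}^t|\bu(s)|_{L^2}^2\;\rd s$ is an increasing supremum of continuous functions, hence lower semicontinuous, hence Borel; dividing by the continuous nonvanishing factor $t-t_0$ on $I\setminus\{t_0\}$ and subtracting the Borel map $\bu\mapsto|\bu(t_0)|_{L^2}^2$ then shows $\Psi$ is Borel. For fixed $\bu$, the function $s\mapsto|\bu(s)|_{L^2}^2$ is Borel and locally bounded (Banach--Steinhaus), so its primitive is locally Lipschitz and, dividing by $t-t_0$, $t\mapsto\Psi(\bu,t)$ is continuous on $I\setminus\{t_0\}$.

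Using this continuity in $t$, one has $\liminf_{t\to t_0^+}\Psi(\bu,t)=\sup_n\inf_{r\in D_n}\Psi(\bu,r)$, where $D_n$ is a countable dense subset of $(t_0,t_0+1/n)\cap I$; this is a countable combination of the Borel maps $\bu\mapsto\Psi(\bu,r)$, so $\bu\mapsto\liminf_{t\to t_0^+}\Psi(\bu,t)$ is Borel. Inequality~\eqref{liminfpsigeqzero} is then immediate: every $\bu\in\Ccal_\loc(I,H_\rw)$ is weakly continuous at $t_0$, so $\bu(s)\rightharpoonup\bu(t_0)$ as $s\to t_0^+$, and weak lower semicontinuity of the $H$-norm gives $\liminf_{s\to t_0^+}|\bu(s)|_{L^2}^2\ge|\bu(t_0)|_{L^2}^2$, which averaging over $(t_0,t)$ turns into $\liminf_{t\to t_0^+}\Psi(\bu,t)\ge0$.

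For the equivalence, with $\bu\in\Ucal_I^\sharp$ fixed, I would close the cycle \eqref{uinucalicrit}$\Rightarrow$\eqref{limcritpsi}$\Rightarrow$\eqref{liminfcritpsi}$\Rightarrow$\eqref{uinucalicrit}. If $\bu\in\Ucal_I$ then $\bu$ is strongly continuous from the right at $t_0$, so $|\bu(s)|_{L^2}^2\to|\bu(t_0)|_{L^2}^2$ and hence $\Psi(\bu,t)\to0$, giving \eqref{limcritpsi}; and \eqref{limcritpsi}$\Rightarrow$\eqref{liminfcritpsi} is trivial. The crucial step is \eqref{liminfcritpsi}$\Rightarrow$\eqref{uinucalicrit}, where I would repeat the argument in the proof of Lemma~\ref{criterionstrongcontinuityright}, the only change being that the auxiliary ``good'' points $t_n'$ must be chosen inside $I^\circ$, which is legitimate since such points have full measure in $I^\circ$ and the shrinking intervals $(t_0,t_0+\tau_n)$ used in that proof lie in $I^\circ$. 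Concretely, \eqref{liminfcritpsi} says $\liminf_{t\to t_0^+}\frac{1}{t-t_0}\int_{t_0}^t|\bu(s)|_{L^2}^2\;\rd s\le|\bu(t_0)|_{L^2}^2$; this produces times $t_n'\to t_0^+$, each a point of strong right-continuity of $\bu$ in $I^\circ$, with $|\bu(t_n')|_{L^2}^2\le|\bu(t_0)|_{L^2}^2+\varepsilon_n$; weak continuity at $t_0$ forces $\bu(t_n')\to\bu(t_0)$ strongly in $H$, and passing to the limit in the energy inequality started at $t_n'$ yields~\eqref{energyinequalityintegralform} with $t'=t_0$, for all $t\in I^\circ$ with $t>t_0$. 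Since $I\setminus I^\circ$ has at most two points and $\bu$ is weakly continuous and locally bounded in $H$ on all of $I$, the remaining defining properties of a weak solution on $I$ are inherited from $\bu\in\Ucal_I^\sharp$, while~\eqref{energyinequalityintegralform} at $t'=t_0$ together with weak continuity at $t_0$ gives $\limsup_{t\to t_0^+}|\bu(t)|_{L^2}^2\le|\bu(t_0)|_{L^2}^2$, hence strong right-continuity at $t_0$; therefore $\bu\in\Ucal_I$.

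I expect the main obstacle to be making \eqref{liminfcritpsi}$\Rightarrow$\eqref{uinucalicrit} fully rigorous, that is, verifying that the adaptation of Lemma~\ref{criterionstrongcontinuityright} indeed delivers the energy inequality~\eqref{energyinequalityintegralform} at the non-interior point $t_0$ (in particular that $\int_{t_0}^t\|\bu(s)\|_{H^1}^2\;\rd s<\infty$, which comes out of the uniform bound in that limiting procedure rather than being assumed) and that nothing further is needed to promote $\bu$ from $\Ucal_I^\sharp$ to $\Ucal_I$. The measurability claims require care only with the product topology on $\Ccal_\loc(I,H_\rw)\times(I\setminus\{t_0\})$ but are otherwise routine.
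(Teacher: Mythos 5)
Your proof is correct and follows essentially the same route as the paper: Galerkin projectors $P_m$ to reduce the measurability of $\Psi$ to (joint) continuity of the approximations, weak lower semicontinuity of the $H$-norm for \eqref{liminfpsigeqzero}, countable dense sets of times for the measurability of the $\liminf$, and the argument of Lemma \ref{criterionstrongcontinuityright} for the equivalence. The only differences are cosmetic — the paper passes to the limit in $m$ by dominated convergence to exhibit $\Psi$ as a pointwise limit of continuous functions, whereas you use monotone convergence to get lower semicontinuity, and you spell out (correctly, and more carefully than the paper's one-line citation) why the criterion of Lemma \ref{criterionstrongcontinuityright} still applies at the endpoint $t_0$ for $\bu\in\Ucal_I^\sharp$, which is only a weak solution on $I^\circ$.
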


\begin{proof} 
For $m\in \NN$, let $P_m$ be the Galerkin projector onto the first $m$ modes
of the Stokes operator. Consider the functions
\[ \Psi_m(\bu,t) = \Psi(P_m\bu,t) 
    = \frac{1}{t-t_0} \int_{t_0}^t \left( |P_m\bu(s)|_{L^2}^2 - |P_m\bu(t_0)|_{L^2}^2\right) \;\rd s,
\]
which are clearly continuous functions on $\Ccal_\loc(I,H_\rw)\times (I\setminus\{t_0\})$.
Since $\bu$ is weakly continuous on 
$I$, the function
\[ t \mapsto |P_m\bu(t)|_{L^2}^2 - |P_m\bu(t_0)|_{L^2}^2
\]
is uniformly bounded on $[t_0,t]$ and converges pointwise to the
function
\[ t \mapsto |\bu(t)|_{L^2}^2 - |\bu(t_0)|_{L^2}^2,
\]
as $m\rightarrow \infty$. Therefore, by the Lebesgue Dominated Convergence
Theorem, the continuous real-valued functions $\Psi_m(\bu,t)$ converge pointwise
in $(\bu,t)\in \Ccal_\loc(I,H_\rw)\times (I\setminus \{t_0\})$ to the
function $\Psi(\bu,t)$. Thus, $\Psi(\bu,t)$ is a Borel function
in $(\bu,t)\in \Ccal_\loc(I,H_\rw)\times (I\setminus \{t_0\})$. 

Now for each $\bu\in \Ccal_\loc(I,H_\rw)$, we note again that
\[ t \mapsto |\bu(t)|_{L^2}^2 - |\bu(t_0)|_{L^2}^2
\]
is uniformly bounded on compact intervals of $I$, 
so that
\[ \Psi(\bu,t) = \frac{1}{t-t_0}\int_{t_0}^t (|\bu(s)|_{L^2}^2 - |\bu(t_0)|_{L^2}^2)\;\rd s, 
     \qquad t\in I, \;t>t_0,
\]
is continuous in $t\in I\setminus \{t_0\}$.

Since $\bu\in \Ccal_\loc(I,H_\rw)$ is in particular weakly continuous at $t_0$, we have 
that 
\[ |\bu(t_0)|\leq \liminf_{t\rightarrow t_0^+} |\bu(t)|,
\]
which implies that
\[ \liminf_{t\rightarrow t_0^+} \Psi(\bu,t) \geq 0,
\]
proving \eqref{liminfpsigeqzero}.

The proof that $\bu\mapsto\liminf_{t\rightarrow t_0^+} \Psi(\bu,t)$ is Borel 
in $\bu\in \Ccal_\loc(I,H_\rw)$ uses the fact that $\Psi(\bu,t)$ is
continuous in $t\in I\setminus \{t_0\}$ for each $\bu$ fixed.
Note that
\[ \liminf_{t\rightarrow t_0^+} \Psi(\bu,t) 
      = \lim_{t\rightarrow 0} \inf_{t_0<s\leq t} \Psi(\bu,s)
      = \lim_{n\rightarrow \infty} \inf_{t_0<s\leq 1/n} \Psi(\bu,s),
\]
and since $\Psi(\bu,t)$ is continuous in $t\in I\setminus\{t_0\}$ we have
\[ \liminf_{t\rightarrow t_0^+} \Psi(\bu,t) 
      = \lim_{n\rightarrow 0} \inf_{t_0<s\leq t, \;s\in \QQ} \Psi(\bu,s).
\]
Since $(t_0,t]\cap \QQ$ is countable, the function
$\inf_{t_0<s\leq t, \;s\in \QQ} \Psi(\bu,s)$ is Borel, and hence
$\bu\mapsto  \liminf_{t\rightarrow t_0^+} \Psi(\bu,t)$ is the limit
of a sequence of Borel functions, hence it is also Borel. 

The equivalence between \eqref{liminfcritpsi}, \eqref{uinucalicrit}, and \eqref{limcritpsi} 
follows directly from Lemma \ref{criterionstrongcontinuityright}. 
\end{proof}
\medskip

\begin{prop}
  \label{ucalgdeltainucaltilde}
  Let $I\subset \RR$ be an interval closed and bounded on the left.
  Then $\Ucal_I(R)$ is a $\Gcal_\delta$-set in $\Ucal_I^\sharp(R)$,
  for any $R\geq R_0$.
\end{prop}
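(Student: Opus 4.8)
The plan is to realize $\Ucal_I(R)$ as the zero set of a Borel function on the compact metric space $\Ucal_I^\sharp(R)$, and to show this zero set is actually a $\Gcal_\delta$. The natural candidate is the function $\bu\mapsto \liminf_{t\to t_0^+}\Psi(\bu,t)$ from Lemma \ref{lemphicondition}, where $t_0$ is the left end point of $I$. By that lemma, this function is Borel on $\Ccal_\loc(I,H_\rw)$ (hence its restriction to $\Ucal_I^\sharp(R)$ is Borel), it is everywhere $\geq 0$, and for $\bu\in\Ucal_I^\sharp$ it vanishes precisely when $\bu\in\Ucal_I$. Therefore
\[
  \Ucal_I(R) = \left\{\bu\in\Ucal_I^\sharp(R);\; \liminf_{t\to t_0^+}\Psi(\bu,t) = 0\right\}.
\]
So the issue is purely topological: why is this particular level set a $\Gcal_\delta$ rather than merely Borel?

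The key step is to rewrite the level set using the continuity of $t\mapsto\Psi(\bu,t)$ on $I\setminus\{t_0\}$ (also from Lemma \ref{lemphicondition}), which lets us replace the $\liminf$ by an infimum over a fixed countable dense set of times and thereby get good semicontinuity. Concretely, fix a sequence $t_0<\cdots<s_2<s_1$ with $s_j\to t_0^+$, $s_j\in I$ (e.g. rationals decreasing to $t_0$). For each $j$, the map $\bu\mapsto\Psi(\bu,s_j)$ is continuous on $\Ccal_\loc(I,H_\rw)$, since by the argument in the proof of Lemma \ref{lemphicondition} it is a pointwise limit that is actually continuous in $(\bu,t)$ jointly once $t$ is fixed away from $t_0$ — more carefully, on the compact set $\Ucal_I^\sharp(R)$ the solutions are uniformly bounded, so $|\bu(s)|_{L^2}^2$ is dominated and $\Psi(\cdot,s_j)$ is continuous there by dominated convergence plus continuity of each $\Psi_m(\cdot,s_j)$. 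Using continuity in $t$, $\liminf_{t\to t_0^+}\Psi(\bu,t) = \lim_{j\to\infty}\inf_{k\geq j}\Psi(\bu,s_k)$, and since this is a nondecreasing limit of upper-semicontinuous functions $g_j(\bu)=\inf_{k\geq j}\Psi(\bu,s_k)$, the liminf is itself upper semicontinuous on $\Ucal_I^\sharp(R)$. Combined with $\liminf_{t\to t_0^+}\Psi(\bu,t)\geq 0$ everywhere, the level set $\{\,\liminf \leq 0\,\}$ equals $\{\,\liminf < 1/n\,\}$ intersected over $n$, and each $\{\,\liminf < 1/n\,\}$ is open by upper semicontinuity; hence
\[
  \Ucal_I(R) = \bigcap_{n\in\NN}\left\{\bu\in\Ucal_I^\sharp(R);\; \liminf_{t\to t_0^+}\Psi(\bu,t) < \frac{1}{n}\right\}
\]
is a $\Gcal_\delta$-set in $\Ucal_I^\sharp(R)$.

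I expect the main obstacle to be getting the continuity/semicontinuity bookkeeping exactly right — in particular verifying that on the compact space $\Ucal_I^\sharp(R)$ the functions $\bu\mapsto\Psi(\bu,s_j)$ are genuinely continuous (not just Borel), which requires a uniform domination to apply the dominated convergence theorem in the $m\to\infty$ limit and then uniform weak convergence control on the finite-dimensional truncations $|P_m\bu(s)|_{L^2}^2$. One should double-check that the integral $\int_{t_0}^{s_j}|\bu(s)|_{L^2}^2\,\rd s$ depends continuously on $\bu$ in the topology of uniform weak convergence on compacts restricted to bounded sets; this is where the metrizability of $B_H(R)_\rw$ and the uniform bound on $\Ucal_I^\sharp(R)$ are used, together with the fact that on $\Ucal_I^\sharp(R)$ one even has compact embedding into $L^2_\loc(I;H)$ by Proposition \ref{ucalitildercompact}, which makes $\bu\mapsto\int_{t_0}^{s_j}|\bu(s)|_{L^2}^2\,\rd s$ continuous directly. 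Once that continuity is in hand, the $\Gcal_\delta$ conclusion is immediate from the upper semicontinuity of the nondecreasing limit $g_j\nearrow\liminf$ and the nonnegativity \eqref{liminfpsigeqzero}.
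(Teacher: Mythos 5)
Your overall strategy (zero set of $\Lambda(\bu)=\liminf_{t\to t_0^+}\Psi(\bu,t)$, realized as a countable intersection of open sets) is the right one, but the route you take to openness contains a genuine error. You argue that $\Lambda$ is upper semicontinuous because it is the nondecreasing limit of the upper semicontinuous functions $g_j(\bu)=\inf_{k\ge j}\Psi(\bu,s_k)$. Increasing pointwise limits preserve \emph{lower}, not upper, semicontinuity: e.g.\ $f_j(x)=\min(jx,1)$ on $[0,1]$ is continuous and increases to a function that fails to be upper semicontinuous at $0$. So the sets $\{\Lambda<1/n\}$ are not known to be open, and your final display is unjustified. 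A second problem is the claim that $\bu\mapsto\Psi(\bu,s_j)$ is \emph{continuous} on $\Ucal_I^\sharp(R)$: the integral term $\int_{t_0}^{s_j}|\bu(s)|_{L^2}^2\,\rd s$ is indeed continuous by the compact embedding into $L^2_\loc(I;H)$, but the subtracted term $|\bu(t_0)|_{L^2}^2$ is only lower semicontinuous under uniform weak convergence (weak convergence of $\bu_j(t_0)$ gives $|\bu(t_0)|_{L^2}\le\liminf_j|\bu_j(t_0)|_{L^2}$, possibly strictly), so $\Psi(\cdot,s_j)$ is only upper semicontinuous; the dominated-convergence argument gives pointwise convergence of $\Psi_m(\cdot,s_j)$ in $\bu$, hence Borel, not continuity. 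A third, more minor, issue: replacing $\liminf_{t\to t_0^+}$ by $\liminf$ along a single decreasing sequence $s_j\downarrow t_0$ is not legitimate even with continuity in $t$ ($\Psi(\bu,\cdot)$ can dip between consecutive $s_j$); you need the infimum over a countable \emph{dense} set of times in each interval $(t_0,t_0+\delta]$.

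The fix is to stop one step earlier: you never need upper semicontinuity of the limit $\Lambda$, only of each $g_j$. Since $\Lambda=\sup_j g_j$ with $g_j$ nondecreasing and $\Lambda\ge 0$, one has
\begin{equation*}
  \Ucal_I(R)=\{\Lambda\le 0\}=\bigcap_j\{g_j\le 0\}=\bigcap_{j,k}\{g_j<1/k\},
\end{equation*}
and each $\{g_j<1/k\}$ is open once $g_j$ is upper semicontinuous, which follows because it is an infimum of the upper semicontinuous functions $\Psi(\cdot,s)$. This is exactly what the paper does, phrased set-theoretically: it writes the bad set as $\bigcup_{k,n}\Ucal_I^{k,n}(R)$ with $\Ucal_I^{k,n}(R)=\{\bu;\ \Psi(\bu,t)\ge 1/k,\ \forall t\in(t_0,t_0+\tau/n]\}$ and proves each such set closed by a direct sequential argument combining the strong $L^2(t_0,t_0+\tau;H)$ convergence (compact embedding) with the weak lower semicontinuity of $|\bu(t_0)|_{L^2}$ — i.e.\ precisely the upper semicontinuity of $\Psi(\cdot,t)$ that your continuity claim overstates.
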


\begin{proof} 
Let $t_0$ be the left end point of the interval $I$.
We have from Lemma \ref{lemphicondition} that 
\[ \liminf_{t\rightarrow t_0^+} \Psi(\bu,t) \geq 0.
\]
for all $\bu\in \Ucal_I^\sharp(R)$, and that such $\bu$
belongs to $\Ucal_I(R)$ if and only if
\[ \liminf_{t\rightarrow t_0^+} \Psi(\bu,t) = 0.
\]
Hence, we can write the ``bad'' set $\Ucal_I^\sharp(R)\setminus \Ucal_I(R)$
as
\[ \Ucal_I^\rb(R) = \Ucal_I^\sharp(R)\setminus \Ucal_I(R)
    = \left\{\bu\in \Ucal_I^\sharp(R); \exists \;\delta,\varepsilon>0,
           \Psi(\bu,t)\geq \varepsilon, \forall t\in (t_0,t_0+\delta] \right\}.
\] 
We can also write it as a countable union of the form
\[ \Ucal_I^\rb(R) = \bigcup_{k,n\in \NN} \Ucal_I^{k,n}(R),
\]
where
\[ \Ucal_I^{k,n}(R) = \{\bu\in \Ucal_I^\sharp(R); \; 
     \Psi(\bu,t)\geq 1/k, \forall t\in (t_0,t_0+\tau/n] \},
\]
where $\tau>0$ is fixed and is such that $[t_0,t_0+\tau]\subset I$.
We claim that
\begin{equation}
  \label{badknsetisclosed}
  \Ucal_I^{k,n}(R) \emph{ is closed in } \Ucal_I^\sharp(R).
\end{equation}

In fact, since $\Ucal_I^\sharp(R)$ is metrizable, it suffices to work with 
sequences $\bu_j\in \Ucal_I^{k,n}(R)$ converging in this metric (i.e.
that of $\Ccal_\loc(I,B_H(R)_\rw)$) to some solution $\bu\in\Ucal_I^\sharp(R)$. 
Since $\Ucal_I^\sharp(R)$ is included in $L_\loc^2(I,H)$ (in fact, compactly
embedded, by Proposition \ref{ucalitildercompact}), it follows that this convergence
also takes place in $L^2(t_0,t_0+\tau;H)$. Since $\bu_j(t_0)$ converges weakly to
$\bu(t_0)$ in $H$, then 
\[ |\bu(t_0)|_{L^2}\leq \liminf_{j\rightarrow \infty} |\bu_j(t_0)|.
\]
This, together with the strong convergence in $L^2(t_0,t_0+\tau;H)$, implies that
\[ \Psi(\bu,t) \geq \limsup_{j\rightarrow\infty}\Psi(\bu_j,t) \geq \frac{1}{k}, 
     \qquad \forall t\in (t_0,t_0+\tau/n].
\]
Hence, $\bu\in \Ucal_I^{k,n}(R)$, which shows that this
set is closed in $\Ucal_I^\sharp(R)$. Note that since $\Ucal_I^\sharp(R)$ is 
compact in $\Ccal_\loc(I,B_H(R)_\rw)$ (from Proposition \ref{ucalitildercompact}), then so are
the sets $\Ucal_I^{k,n}(R)$.

Now, the complement $\Ucal_I^\sharp(R)\setminus \Ucal_I^{k,n}(R)$
is open in $\Ucal_I^\sharp(R)$ and we can write 
$\Ucal_I(R)=\Ucal_I^\sharp(R)\setminus\Ucal_I^\rb(R)$ 
as a countable intersection of open sets
\[ \Ucal_I(R) 
     = \bigcap_{k,n\in \NN} \left(\Ucal_I^\sharp(R)\setminus \Ucal_I^{k,n}(R)\right).
\]
Hence, $\Ucal_I(R)$ is a $\Gcal_\delta$-set in $\Ucal_I^\sharp(R)$.
\end{proof}
\medskip

Since $\Ucal_I = \bigcup_{k\in \NN}\Ucal_I(kR_0)$, in the case
$I$ is closed and bounded on the left, the following corollary holds.
\begin{cor}
  \label{corgdeltasigma}
  Let $I\subset\RR$ be an interval closed and bounded on the left.
  Then $\Ucal_I$ is a $\Gcal_{\delta\sigma}$-set in $\Ucal_I^\sharp$.
  \qed
\end{cor}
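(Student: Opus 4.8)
The plan is to combine Proposition~\ref{ucalgdeltainucaltilde} with the decomposition from Proposition~\ref{Ucalileftborel}. By the latter we may fix a sequence $R_k\geq R_0$ with $R_k\to\infty$ and write $\Ucal_I=\bigcup_{k\in\NN}\Ucal_I(R_k)$, while by the former each $\Ucal_I(R_k)$ is a $\Gcal_\delta$-set in $\Ucal_I^\sharp(R_k)$. Since a countable union of $\Gcal_\delta$-sets is a $\Gcal_{\delta\sigma}$-set, and since a $\Gcal_\delta$-subset of a $\Gcal_\delta$-subset of a space $X$ is again a $\Gcal_\delta$-subset of $X$ (write the relatively open sets as traces of open sets of $X$ and intersect the two countable families), the whole corollary reduces to the single assertion that $\Ucal_I^\sharp(R)$ is a $\Gcal_\delta$-set in $\Ucal_I^\sharp$ for every $R\geq R_0$.

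To prove this I would proceed as follows. First fix a countable dense subset $D\subset I$ and, for $m\in\NN$, let $P_m$ be the Galerkin projector of Section~\ref{NSEsettingsec}. The map $v\mapsto P_mv$ is weakly continuous from $H$ into the finite-dimensional space $P_mH$, on which the weak and norm topologies coincide, so $v\mapsto|P_mv|_{L^2}$ is weakly continuous on $H$; consequently $\{v\in H;\;|P_mv|_{L^2}<r\}$ is open in $H_\rw$ and, since $\Pi_t\colon\Ucal_I^\sharp\to H_\rw$ is continuous, $\Pi_t^{-1}\{v\in H;\;|P_mv|_{L^2}<r\}$ is open in $\Ucal_I^\sharp$. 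I then claim
\[
  \Ucal_I^\sharp(R)=\bigcap_{m\in\NN}\;\bigcap_{t\in D}\;\bigcap_{l\in\NN}\;
     \Pi_t^{-1}\{v\in H;\;|P_mv|_{L^2}<R+1/l\},
\]
which exhibits $\Ucal_I^\sharp(R)$ as a countable intersection of open subsets of $\Ucal_I^\sharp$. The inclusion ``$\subseteq$'' is immediate since $|P_mv|_{L^2}\leq|v|_{L^2}$. For ``$\supseteq$'', if $\bu$ belongs to the right-hand side then, letting $l\to\infty$, $|P_m\bu(t)|_{L^2}\leq R$ for every $m$ and every $t\in D$, hence $|\bu(t)|_{L^2}=\sup_m|P_m\bu(t)|_{L^2}\leq R$ for all $t\in D$; by the weak continuity of $\bu$ and the weak lower semicontinuity of the norm of $H$ this extends to all $t\in I$, so $\bu\in\Ccal_\loc(I,B_H(R)_\rw)$, and since $\bu$ is moreover a weak solution on $I^\circ$ (being an element of $\Ucal_I^\sharp$), we conclude $\bu\in\Ucal_I^\sharp(R)$.

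Assembling the pieces then finishes the proof: $\Ucal_I(R_k)$ is $\Gcal_\delta$ in $\Ucal_I^\sharp(R_k)$, $\Ucal_I^\sharp(R_k)$ is $\Gcal_\delta$ in $\Ucal_I^\sharp$, hence $\Ucal_I(R_k)$ is $\Gcal_\delta$ in $\Ucal_I^\sharp$, and therefore $\Ucal_I=\bigcup_k\Ucal_I(R_k)$ is a $\Gcal_{\delta\sigma}$-set in $\Ucal_I^\sharp$. I expect the only real obstacle to be the $\Gcal_\delta$-representation of $\Ucal_I^\sharp(R)$ inside $\Ucal_I^\sharp$: although $\Ucal_I^\sharp(R)$ is compact, hence closed, in $\Ucal_I^\sharp$, the ambient space $\Ucal_I^\sharp$ need not be metrizable, so one cannot simply invoke that closed sets are $\Gcal_\delta$; one must instead express the requirement ``$\bu(t)\in B_H(R)$ for all $t$'' through a countable family of \emph{weakly open} conditions, and the Galerkin projectors $P_m$ are exactly the tool that makes the inequality $|\cdot|_{L^2}\leq R$ into such a condition --- the very same device already employed in the proof of Proposition~\ref{Ucalileftborel} via the sets $O^{m,\varepsilon}_\rw$.
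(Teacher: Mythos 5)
Your proof is correct and follows the same route as the paper: decompose $\Ucal_I=\bigcup_k\Ucal_I(R_k)$ as in Proposition \ref{Ucalileftborel} and invoke Proposition \ref{ucalgdeltainucaltilde} for each piece. The one point where you go beyond the paper's (very terse) argument is in explicitly verifying that each $\Ucal_I^\sharp(R)$ is itself a $\Gcal_\delta$-set in $\Ucal_I^\sharp$ --- needed because the ambient space is not known to be metrizable, so ``compact, hence closed'' does not immediately give $\Gcal_\delta$ --- and your representation of $\Ucal_I^\sharp(R)$ as a countable intersection of the weakly open sets $\Pi_t^{-1}\{v;\,|P_mv|_{L^2}<R+1/l\}$ over a countable dense set of times, using weak continuity and weak lower semicontinuity of the norm for the reverse inclusion, is a correct and welcome way to close that gap.
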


In the particular case the forcing term $\bbf$ is time-independent we also have the
following density result. We do not use this density in this article, but it might be useful in the study of stationary statistical solutions.

\begin{prop}
  Let $I\subset \RR$ be an interval closed and bounded on the left and let $R\geq R_0$.
  Suppose the forcing term is time-independent, with $\bbf\in H$. Then, 
  $\Ucal_I(R)$ is a dense $\Gcal_\delta$-set in $\Ucal_I^\sharp(R)$ and
  $\Ucal_I$ is a dense $\Gcal_{\delta\sigma}$-set in $\Ucal_I^\sharp$.
\end{prop}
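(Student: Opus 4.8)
The plan is to note first that the $\Gcal_\delta$ and $\Gcal_{\delta\sigma}$ assertions are already established, without any hypothesis on $\bbf$, in Proposition \ref{ucalgdeltainucaltilde} and Corollary \ref{corgdeltasigma}; so only the density statements are new. Next I would reduce to a single claim. Since $\Ucal_I(R)\subseteq\Ucal_I$ and, by Proposition \ref{ucalisharpsigmacompact}, $\Ucal_I^\sharp=\bigcup_k\Ucal_I^\sharp(R_k)$ for a suitable sequence $R_k\nearrow\infty$ with $R_k\geq R_0$, each $\Ucal_I^\sharp(R_k)$ carrying the subspace topology inherited from $\Ucal_I^\sharp$, it suffices to prove that $\Ucal_I(R)$ is dense in $\Ucal_I^\sharp(R)$ for every $R\geq R_0$: then any nonempty relatively open subset of $\Ucal_I^\sharp$ meets some $\Ucal_I^\sharp(R_k)$ in a nonempty relatively open set, hence meets $\Ucal_I(R_k)\subseteq\Ucal_I$, so $\Ucal_I$ is dense in $\Ucal_I^\sharp$.

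The engine of the density proof is that, when $\bbf\in H$ is time-independent, the functional equation \eqref{nseeq} and the energy inequality \eqref{energyinequalityintegralform} are autonomous: time-translates of weak solutions are again weak solutions, with the same range and with the set of points of strong right-continuity translated accordingly. Fix $\bu\in\Ucal_I^\sharp(R)$ and let $t_0$ be the left end point of $I$. By Lemmas \ref{criterionstrongcontinuityright} and \ref{lemphicondition} the only obstruction to $\bu$ lying in $\Ucal_I(R)$ is a possible failure of strong continuity from the right at $t_0$. By Definition \ref{deflerayhopfweaksolution}\eqref{lheineq}, the points of strong right-continuity of $\bu$ form a subset of $I^\circ$ of full measure, so I would pick $\delta_n\downarrow 0$ with $t_0+\delta_n$ such a point and set $\bw_n:=\bu(\,\cdot\,+\delta_n)$. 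If $I=[t_0,\infty)$, then $\bw_n$ is a weak solution on all of $I$ (it solves \eqref{nseeq} and satisfies the energy inequality on $(t_0,\infty)$ because $\bu$ does there, and is strongly right-continuous at $t_0$ by the choice of $\delta_n$) with range in $B_H(R)$, so $\bw_n\in\Ucal_I(R)$; and $\bw_n\to\bu$ in $\Ccal_\loc(I,B_H(R)_\rw)$ because on each compact subinterval $\bu$ is uniformly continuous for the metrizable weak topology of $B_H(R)$, so its weak oscillation over intervals of length $\delta_n$ tends to $0$ uniformly. This settles the half-line case.

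When $I$ is bounded, the translate $\bu(\,\cdot\,+\delta_n)$ is no longer defined on all of $I$, so I would first extend $\bu$ and then use the continuity of the restriction operator. Concretely, I claim every $\bu\in\Ucal_I^\sharp(R)$ is the restriction to $I$ of some $\bU\in\Ucal_{[t_0,\infty)}^\sharp(R)$, where $t_1$ denotes the right end point of $I$. To build $\bU$: since $\bu\in L_\loc^\infty(I;H)$ has a time derivative in $L_\loc^{4/3}(I^\circ;V')$, it extends to a continuous $V'$-valued function on $[t_0,t_1]$ bounded in $H$, hence has a weak limit $\bu(t_1^-)\in B_H(R)$; using the energy inequality on $I^\circ$ and weak lower semicontinuity of $|\cdot|_{L^2}$, one checks $\bu$ is in fact a weak solution on $(t_0,t_1]$; then one takes any global weak solution $\bv$ on $[t_1,\infty)$ with $\bv(t_1)=\bu(t_1^-)$ — whose range lies in $B_H(R)$ by \eqref{invarianceBR} — and pastes via Lemma \ref{pastinglemma}. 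Granting the extension, the bounded case follows from the half-line case: take $\bU$ with $\Pi_I\bU=\bu$ and $\bw_n\in\Ucal_{[t_0,\infty)}(R)$ with $\bw_n\to\bU$ as above; then $\Pi_I\bw_n\in\Ucal_I(R)$ and $\Pi_I\bw_n\to\Pi_I\bU=\bu$ by continuity of $\Pi_I$.

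I expect the extension step to be the main obstacle: one must verify that a function which solves \eqref{nseeq} only on the open interior $I^\circ$ and is merely weakly continuous at the right end point still satisfies the energy inequality up to and including that end point, so that it may be continued by the existence theorem and pasted; everything else is a routine application of the autonomy of the system, of weak uniform continuity, and of the continuity of the restriction maps. Finally, combining the density of $\Ucal_I(R)$ in $\Ucal_I^\sharp(R)$ with Proposition \ref{ucalgdeltainucaltilde} and Corollary \ref{corgdeltasigma} gives that $\Ucal_I(R)$ is a dense $\Gcal_\delta$-set in $\Ucal_I^\sharp(R)$ and that $\Ucal_I$, being the union of the dense $\Gcal_\delta$-sets $\Ucal_I(R_k)\subseteq\Ucal_I^\sharp(R_k)$ inside $\Ucal_I^\sharp$, is a dense $\Gcal_{\delta\sigma}$-set in $\Ucal_I^\sharp$.
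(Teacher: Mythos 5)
Your proposal is correct and follows essentially the same route as the paper: reduce to density of $\Ucal_I(R)$ in $\Ucal_I^\sharp(R)$, extend $\bu$ to a global element of $\Ucal_{[t_0,\infty)}^\sharp(R)$ by pasting (resp.\ a compactness argument when $I$ is open on the right), and then exploit the time-independence of $\bbf$ to translate by small amounts $\delta_n\downarrow 0$ chosen so that $t_0+\delta_n$ is a point of strong right-continuity, with convergence of the translates following from uniform weak continuity on compact subintervals. The only differences are organizational (you translate directly on the half-line and extend only in the bounded case, whereas the paper extends first in all cases), and your explicit verification that the energy inequality propagates to the right end point before pasting is a point the paper glosses over but which is handled exactly as you indicate, via weak lower semicontinuity of the $H$-norm.
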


\begin{proof}
  That $\Ucal_I(R)$ is a $\Gcal_\delta$-set in $\Ucal_I^\sharp(R)$ and
  $\Ucal_I$ is a $\Gcal_{\delta\sigma}$-set in $\Ucal_I^\sharp$ have been
  proved in Proposition \ref{ucalgdeltainucaltilde} and Corollary \ref{corgdeltasigma}.
  We only need to prove the density. Since any $\bu\in \Ucal_I^\sharp$ belongs to some
  $\Ucal_I^\sharp(R)$ for some $R\geq R_0$ sufficiently large, it suffices to prove the result
  for a given $R\geq R_0$. 

  Let then $\bu\in \Ucal_I^\sharp(R)$, with $R\geq R_0$. We first show the existence of 
  an extension of $\bu$ to a global solution $\tilde\bu\in \Ucal_{[t_0,\infty)}^\sharp(R)$ on 
  $[t_0,\infty)$. In the case $I$ is closed on the right, say $I=[t_0,T]$, just paste $\bu$ with 
  a global weak solution on $[T,\infty)$ with the initial condition $\bu(T)$,
  using Lemma \ref{pastinglemma}. In the case $I=[t_0,T)$,
  take a sequence $t_n$ of positive times converging to $T$ from the left,
  and, for each $n$, consider a global weak solutions on $[t_n,\infty)$ with
  initial condition $\bu(t_n)$. Then, from Lemma \ref{pastinglemma}, each such 
  global weak solution can be concatenated with $\bu$ on $[t_0,t_n]$ to yield a global 
  weak solution on $[t_0,\infty)$. Thanks now to Lemma \ref{convergenceofsolutions} 
  this sequence of concatenated solutions have a limit point which is a global weak 
  solution $\tilde\bu$ on $[t_0,\infty)$, which must agree with $\bu$ on $I$. Note
  that Lemma \ref{convergenceofsolutions} does not apply to $[t_0,\infty)$ since
  this interval is closed on the left, but since the sequence of solutions agree with $\bu$
  near the origin we just need to apply this lemma on, say, $(t_0,\infty)$.

  Let now $t_n'\rightarrow t_0^+$ be a sequence of positive times which are 
  points of strong continuity from the right for $\bu$. Define 
  $\tilde\bu_n(t)=\bu(t_n'-t_0+t)$, for $t\geq t_0$. Since $\bbf$ is time-independent,
  $\tilde\bu_n$ is also a weak solution on $[t_0,\infty)$ which belongs to
  $\Ucal_{[t_0,\infty)}(R)$. Let $\bu_n\in \Ucal_I(R)$ 
  be the restriction of $\tilde\bu_n$ to the time interval $I$.
  From the (uniform on bounded intervals) weak continuity of $\tilde\bu$
  it follows that $\bu_n$ converges to $\bu$ in $\Ucal_I^\sharp(R)$, 
  which shows that $\Ucal_I(R)$ is dense in $\Ucal_I^\sharp(R)$.
\end{proof}

\section{Time-dependent statistical solutions}

In this section we address the definition and properties of 
time-dependent statistical solutions and of Vishik-Fursikov
measures and Vishik-Fursikov statistical solutions of the Navier-Stokes equations, 
casting a new perspective on the theories given in
the fundamental works \cite{foias72,foiasprodi76,vishikfursikov78,vishikfursikov88}
(see also \cite{fmrt2001a}).

\subsection{Cylindrical test functions}
\label{cylindricaltestfn}

For the definition of statistical solutions one needs to consider
appropriate test functions in $\Ccal(H_\rw)$. For this purpose we use the
following definition.
\begin{defs}
  \label{deftestfunction}
  The cylindrical test functions are the functionals
  $\Phi:H\rightarrow \RR$ of the form
  \begin{equation}
    \label{cylindricalfunctions}
    \Phi(\bu) = \phi((\bu,\bv_1),\ldots,(\bu,\bv_k)),
  \end{equation}
  where $k\in\NN$, $\phi$ is a $C^1$ real-valued function on $\RR^k$
  with compact support, and $\bv_1,\ldots,\bv_k$ belong to $V$.
  For such $\Phi$, we denote by $\Phi'$ its Fr\'echet derivative
  in $H$, which has the form
  \[ \Phi'(\bu) = \sum_{j=1}^k
         \partial_j\phi((\bu,\bv_1),\ldots,(\bu,\bv_k))\bv_k,
  \]
  where $\partial_j\phi$ is the derivative of $\phi$ with respect
  to its $j$-th coordinate.
\end{defs}

The significance of the set of cylindrical test functions can be seen by the fact that, when 
restriced to a bounded ball $B_H(R)_\rw$, $R>0$, the cylindrical test functions are 
dense in the space $\Ccal(B_H(R)_\rw)$. In fact, since $B_H(R)_\rw$ is a
compact and separable metrizable space, and the cylindrical test functions form a 
subalgebra which contains the unit element and separates points in $B_H(R)_\rw$,
it follows from the Stone-Weierstrass Theorem \cite{dunfordschwartz} 
that the space of cylindrical test 
functions restricted to $B_H(R)_\rw$ is dense in $\Ccal(B_H(R)_\rw)$.

\subsection{Definition and existence of time-dependent statistical solutions}
Time-dependent statistical solutions in the sense of \cite{foias72, foiasprodi76} 
are defined in the following way.

\renewcommand{\theenumi}{\roman{enumi}}
\begin{defs}
  \label{deftimedependetstatisticalsolution}
  For a given interval $I\subset \RR$, 
  a family $\{\mu_t\}_{t\in I}$ 
  of Borel probability measures on $H$ is called a statistical
  solution of the Navier-Stokes equations over $I$
  if the following conditions hold:
  \begin{enumerate}
    \item \label{ssphimeas} The function
      \[ t\mapsto \int_H \Phi(\bu) \;\rd\mu_t(\bu)
      \]
      is measurable on $I$ for every bounded and continuous
      real-valued function $\Phi$ on $H$;
    \item \label{ssenergy} The function
      \[  t\mapsto \int_H |\bu|_{L^2}^2 \;\rd\mu_t(\bu)
      \]
      belongs to $L_\loc^\infty(I)$;
    \item \label{ssenstrophy} The function
      \[ t\mapsto \int_H \|\bu\|_{H^1}^2 \;\rd\mu_t(\bu)
      \]
      belongs to $L_\loc^1(I)$;
    \item \label{ssliouville} For any cylindrical test function $\Phi$, 
      the Liouville-type equation
      \begin{equation}
        \label{liouvilleeq}
        \int_H \Phi(\bu) \;\rd\mu_t(\bu) = \int_H \Phi(\bu) \;\rd\mu_{t'}(\bu)
           + \int_{t'}^t \int_H \dual{\bF(\bu),\Phi'(\bu)}_{V',V}\;\rd\mu_s(\bu)\;\rd s
      \end{equation}
      holds for all $t', t\in I$, where
      $\bF(\bu) = \bbf - \nu A\bu - B(\bu,\bu)$, so that 
      \[ \dual{\bF(\bu),\Phi'(\bu)}_{V',V}=(\bbf,\Phi'(\bu))_{L^2}-\nu
           \Vinner{\bu,\Phi'(\bu)}-b(\bu,\bu,\Phi'(\bu));
      \]
    \item \label{ssstmeaneneineq}
      The strengthened mean energy inequality holds on $I$, i.e.
      there exists a set $I'\subset I$ of full measure in $I$ such that
      for any nonnegative, nondecreasing, continuously-differentiable real-valued function 
      $\psi:[0,\infty)\rightarrow \RR$ with bounded derivative, the inequality
      \begin{multline}
        \label{strengthenedmeanenergyineq}
        \frac{1}{2}\int_H \psi(|\bu|_{L^2}^2)\;\rd\mu_t(\bu)
         + \nu\int_{t'}^t \int_H \psi'(|\bu|_{L^2}^2)\|\bu\|_{H^1}^2
                 \;\rd\mu_s(\bu)\rd s \\
         \leq \frac{1}{2}\int_H \psi(|\bu|_{L^2}^2)\;\rd\mu_{t'}(\bu)
         + \int_{t'}^t \int_H \psi'(|\bu|_{L^2}^2)(\bbf,\bu)_{L^2}
                 \;\rd\mu_s(\bu)\rd s
      \end{multline}
      holds for any $t'\in I'$ and for all $t\in I$ with $t\geq t'$;
    \item \label{ssinitialtime}
      If $I$ is closed and bounded on the left with left end point $t_0$,
      then the function
      \[ t\mapsto \int_H \psi(|\bu|_{L^2}^2) \;\rd\mu_t(\bu)
      \]
      is continuous at $t=t_0$ from the right, for every
      function $\psi$ as in \eqref{ssstmeaneneineq}.
 \end{enumerate}
\end{defs}

\renewcommand{\theenumi}{\alph{enumi}}
\begin{rmk}
  \label{r2.6a}
  We have the following consequences and remarks concerning the definition above.
  \begin{enumerate}
    \item Note that \eqref{ssphimeas}, \eqref{ssenergy}, \eqref{ssenstrophy}, 
      \eqref{ssliouville} imply that the function 
      $t\rightarrow \int_H \Phi(\bu)\;\rd\mu_t(\bu)$ is continuous
      on $I$ for all $\Phi$ of the type considered in \eqref{ssliouville}. 
      In particular, this continuity is valid at the initial 
      time if the initial time belongs to the interval $I$, i.e.
      if $I$ is closed and bounded on the left.

    \item \label{comb} 
      Note that condition \eqref{ssenstrophy} actually follows from conditions 
      \eqref{ssphimeas}, \eqref{ssenergy}, and \eqref{ssstmeaneneineq}
      with $\psi(r)=r$, $r\geq 0$.

    \item \label{comc} 
      If $I$ is closed and bounded on the left, and assuming 
      \eqref{ssphimeas}, \eqref{ssenergy}, \eqref{ssenstrophy}, 
      \eqref{ssliouville}, \eqref{ssstmeaneneineq}
      then condition \eqref{ssinitialtime} above is equivalent to assuming that
      \eqref{ssstmeaneneineq} holds for $t'=t_0$. 
      Indeed, it is immediate to see, using
      \eqref{ssphimeas}, \eqref{ssenergy}, \eqref{ssenstrophy} 
      and letting $t'\rightarrow t_0$ in \eqref{ssstmeaneneineq}, that \eqref{ssinitialtime}
      implies \eqref{ssstmeaneneineq} with $t'=t_0$. 

      For the converse, let $P_m$ be the Galerkin projector on the 
      first $m$ modes of the Stokes operator and let $\psi_\varepsilon$
      be a continuously-differentiable approximation of $\psi$ with
      compact support and with $0\leq \psi_\varepsilon\leq \psi$.
      Then, $\Phi_{m,\varepsilon}(\bu) =\psi_\varepsilon(|P_m\bu|_{L^2}^2)$ 
      satisfies the requirements in \eqref{ssliouville}, and
      \[ \lim_{t\rightarrow t_0^+} 
            \int_H \psi_\varepsilon(|P_m\bu|_{L^2}^2) \;\rd\mu_t(\bu) 
              = \int_H \psi_\varepsilon(|P_m\bu|_{L^2}^2) \;\rd\mu_{t_0}(\bu).
      \]
      Using this relation and the facts that $\psi_\varepsilon\leq \psi$
      and $|P_m\bu|_{L^2}\leq |\bu|_{L^2}$ in $H$ (so that in particular we can
      apply the Monotone Convergence Theorem 
      \cite{dunfordschwartz,rudin}) we find
      \begin{multline*}
          \int_H \psi(|\bu|_{L^2}^2) \;\rd\mu_{t_0}(\bu) 
            = \lim_{\varepsilon\rightarrow 0, m\rightarrow \infty} 
                \int_H \psi_\varepsilon(|P_m\bu|_{L^2}^2)
                \;\rd\mu_{t_0}(\bu) \\
            = \lim_{\varepsilon\rightarrow 0, m\rightarrow \infty} 
               \lim_{t\rightarrow t_0^+}
               \int_H \psi_\varepsilon(|P_m\bu|_{L^2}^2)
                \;\rd\mu_t(\bu) 
            \leq \liminf_{t\rightarrow t_0^+} 
               \int_H \psi(|\bu|_{L^2}^2) \;\rd\mu_t(\bu).
      \end{multline*}
    
      On the other hand, using \eqref{ssstmeaneneineq} with $t'=t_0$ and letting 
      $t\rightarrow t_0^+$, we find
      \[  \limsup_{t\rightarrow t_0^+} 
            \int_H \psi(|\bu|_{L^2}^2) \;\rd\mu_t(\bu) 
              \leq \int_H \psi(|\bu|_{L^2}^2) \;\rd\mu_{t_0}(\bu).
      \]    

      Therefore, the equality holds and \eqref{ssinitialtime} is proved. 
 
    \item Taking $\psi(r)=r$ in condition \eqref{ssinitialtime} we find that
      \begin{equation}
        \label{e2.N}
        \lim_{t\rightarrow t_0^+}\int_H |\bu|_{L^2}^2 \;\rd\mu_t(\bu) 
            = \int_H |\bu|_{L^2}^2 \;\rd\mu_{t_0}(\bu).
      \end{equation}
      Then, using \eqref{ssenergy}, we find that the initial condition 
      necessarily has finite mean kinetic energy:
      \begin{equation}
        \int_H |\bu|_{L^2}^2 \;\rd\mu_{t_0}(\bu) < \infty.
        \label{e2.Na}
      \end{equation}

    \item Using the arguments in \eqref{comb} and \eqref{comc}, one can show
      in fact that $\int \psi(\bu)\;\rd\mu_t(\bu)$ and
      $\int |\bu|_{L^2}^2 \;\rd\mu_t(\bu)$ are continuous from the
      right almost everywhere in $I$, i.e. at any point $t'$ 
      allowed in \eqref{ssstmeaneneineq}. 
      
    \item A weaker form of the energy inequality (replacing \eqref{ssstmeaneneineq}) 
      is sometimes used in the definition of statistical solution, namely
      \begin{multline}
        \label{meanenergyinequality}
        \frac{1}{2}\int_H |\bu|_{L^2}^2\;\rd\mu_t(\bu)
         + \nu\int_{t'}^t \int_H \|\bu\|_{H^1}^2 \;\rd\mu_s(\bu)\rd s \\
         \leq \frac{1}{2}\int_H |\bu|_{L^2}^2\;\rd\mu_{t'}(\bu)
         + \int_{t'}^t \int_H (\bbf,\bu)_{L^2} \;\rd\mu_s(\bu)\rd s
      \end{multline}
      for all $t\in I$ and almost all $t'\in I$ with $t'\leq t$. For individual solutions,
      the two corresponding energy inequalities, \eqref{energyinequalityintegralform} 
      and \eqref{strengthenedenergyineq}, are actually equivalent, as proved 
      in \cite{dascaliuc}. It is not known, however, whether the 
      mean versions \eqref{meanenergyinequality} and \eqref{strengthenedmeanenergyineq} 
      are equivalent. The strengthened version \eqref{ssstmeaneneineq} 
      is consistent with the usual definition 
      of stationary statistical solutions (see \cite{frtssp2}).
  \end{enumerate}
\end{rmk}

The existence of time-dependent statistical solutions in the sense above
was first proved in \cite{foias72} via Galerkin approximation (see Theorem 1 on page 254
and Proposition 1 on page 291 in \cite{foias72}). The 
existence result can be stated in the following way: Let $t_0\in \RR$ and let $\mu_0$ be a 
Borel Probability measure on $H$ satisfying 
  \[  \int_H |\bu|_{L^2}^2\;\rd\mu_0(\bu) < \infty.
  \]
Then, there exists a time-dependent statistical solution  $\{\mu_t\}_{t\geq t_0}$ satisfying 
$\mu_{t_0}=\mu_0$.

We present in the next sections a different proof based on an idea given in 
\cite{fmrt2001a}, which yields in fact a statistical solution of a particular type, which
we term a Vishik-Fursikov statistical solution. 

\subsection{Definition and existence of Vishik-Fursikov measures}

In the approach of Vishik and Fursikov, the statistical solutions
are obtained through the help of probability measures in suitable 
trajectory spaces. What makes them measures relevant to 
fluid flows is the condition that they be carried by the space
of individual weak solutions. We also ask them to have
finite mean kinetic energy. Inspired by their approach we introduce the
following definition.

\renewcommand{\theenumi}{\roman{enumi}}
\begin{defs}
  \label{defvfmeasure}
  For a given interval $I\subset \RR$, a
  Vishik-Fursikov measure over $I$  
  is defined as a Borel probability measure $\rho$ on the space
  $\Ccal_\loc(I,H_\rw)$ with the following properties
  \begin{enumerate}
    \item \label{rhocarrierforvfm} $\rho$ is carried by $\Ucal_I^\sharp$;
    \item \label{rhomeankineticforvfm} We have
      \[ t\rightarrow \int_{\Ucal_I^\sharp} |\bu(t)|_{L^2}^2 \;\rd\rho(\bu) \in L_\loc^\infty(I);
      \]
    \item \label{meancontinuityatinitialtimeforvfm}
      If $I$ is closed and bounded on the left, with
      left end point $t_0$, then for any nonnegative, nondecreasing 
      continuously-differentiable real-valued function $\psi:[0,\infty)\rightarrow \RR$ 
      with bounded derivative, we have
      \[ \lim_{t\rightarrow t_0^+} \int_{\Ucal_I^\sharp} \psi(|\bu(t)|_{L^2}^2) \;\rd\rho(\bu)
               = \int_{\Ucal_I^\sharp} \psi(|\bu(t_0)|_{L^2}^2) \;\rd\rho(\bu) < \infty.
      \]
  \end{enumerate}
\end{defs}

\renewcommand{\theenumi}{\roman{enumi}}
\begin{rmk}
  \label{rmkorgdefvfss}
  The Definition \ref{defvfmeasure} for a Vishik-Fursikov measure is 
  inspired by the definition of time-dependent statistical solution given
  by Vishik and Fursikov (see \cite{vishikfursikov88}), but it is actually not 
  the same. Our definition is stricter in the sense that any Vishik-Fursikov 
  measure according to Definition \ref{defvfmeasure} is a time-dependent 
  statistical solution according to Vishik and Fursikov. 
  Considering for simplicity $I=[0,T]$, $T>0$,
  their original definition of time-dependent statistical solution 
  on $[0,T]$ is that of a Borel probability measure $\rho$ on the space
  $Z=L^2(0,T;H)\cap C([0,T]; V^{-s})$, for a given $s\geq 2$, 
  where $V^\alpha = D(A^{\alpha/2})$, $\alpha\in \RR$, 
  and satisfying the following conditions:
  \begin{enumerate}
    \item $\rho$ is carried by the space
      \[ \tilde \Lcal  = \{ \bu\in L^2(0,T; V)\cap L^\infty(0,T; H); 
            \; \bu'\in L^\infty(0,T;V^{-s});
      \]
    \item $\rho$ is carried by a Borel subset $W$ of $Z$ which is 
      closed in $\tilde \Lcal$, and consists of weak solutions not necessarily 
      of Leray-Hopf type;  
    \item $\rho$ satisfies a weaker form of the mean energy inequality, namely     
      \[ \int_Z \left( |\bu(t)|_{L^2}^2 + \|\bu\|_{\tilde \Lcal}^2 \right\} \;\rd\rho(\bu)
           \leq C \left( \int_Z |\bu(0)|_{L^2}^2 \;\rd\rho(\bu) + 1\right),
      \]
      for all $t\in [0,T]$, for some constant $C\geq 0$ which does not depend on $t$,
      and where $\|\bu\|_{\tilde \Lcal}$ is the natural norm for $\tilde \Lcal$.
  \end{enumerate}
  It has been proved by Vishik and Fursikov (see \cite[Chapter 4]{vishikfursikov88}),
  using Galerkin approximations, 
  that for any $T>0$ and for every Borel probability $\mu_0$ on $H$ with finite mean 
  kinetic energy, i.e.
  \[ \int_H |\bu|_{L^2}^2\;\rd\mu_0(\bu) < \infty,
  \] 
  there exists a time-dependent statistical solution $\rho$ over 
  the time interval $[0,T]$ with initial probability $\Pi_0\rho=\mu_0$.
  This notion of statistical solution will not be used in the sequel.
\end{rmk}

\begin{rmk}
  \label{rmkexistenceproofsfortdss}
  As mentioned in the previous section, the original proof of existence of statistical solutions 
  in the sense of Definition \ref{deftimedependetstatisticalsolution} was also based on
  Galerkin approximations \cite{foias72}. 
  In \cite{fmrt2001a}, a different proof was given, which form the basis of our present work. 
  The proof in \cite{fmrt2001a} was based on the 
  Krein-Milman Theorem, in which the idea was essentially to prove the existence of 
  a Vishik-Fursikov measure in a sense somewhat weaker than Definition \ref{defvfmeasure}
  and then prove that the projections of such a measure at each time $t$
  yield a family of measures which is a statistical solution.
  In that proof, however, we overlooked the fact that $\Ucal_I(R)$ may not be compact, and
  in particular the energy inequality for $t_0$ at the left end point of the time interval
  does not follow in a trivial way. 
  Using instead the compact space $\Ucal^\sharp_I(R)$, the proof of existence of a 
  Vishik-Fursikov measure follows very much the steps in \cite{fmrt2001a}; nevertheless, 
  we include below this proof for the sake of completeness. The only delicate step and 
  which was not proved in \cite{fmrt2001a} is the continuity of the moments of the measure 
  at the initial time (expressed by condition \eqref{meancontinuityatinitialtimeforvfm} 
  in the Definition \ref{defvfmeasure}),
  and for that we need to take special care. 
  We also remark on another subtle difference between the result in \cite{fmrt2001a}, 
  in which the specific construction yielded a family of measures satisfying a mean 
  energy inequality, while in our case we prove that any Vishik-Fursikov measure satisfies 
  a (strengthened) mean energy inequality, not only the one constructed in the proof of 
  existence.
\end{rmk}

\begin{thm}
  \label{thmexistencevfmeasure}
  Let $t_0\in \RR$ and let $\mu_0$ be a Borel probability 
  measure on $H$ with finite mean kinetic energy, i.e.
  \[ \int_H |\bu|_{L^2}^2 \;\rd\mu_0(\bu) < \infty.
  \]
  Then, there exists a Vishik-Fursikov measure $\rho$ over the time interval
  $I=[t_0,\infty)$ such that $\Pi_{t_0}\rho=\mu_0$.
\end{thm}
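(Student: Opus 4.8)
The plan is to obtain $\rho$ as a weak-star limit of discrete (convex combinations of Dirac) measures supported on finitely many Leray-Hopf weak solutions, using the Krein-Milman argument from \cite{fmrt2001a} but carried out inside the \emph{compact} space $\Ucal_I^\sharp(R)$ rather than the possibly-non-compact $\Ucal_I(R)$. First I would reduce to the case where $\mu_0$ has bounded support: write $\mu_0$ as a (countable) convex combination of its restrictions to the shells $\{R_{k-1}\le|\bu|_{L^2}<R_k\}$ with $R_k\to\infty$, normalize each piece, build a Vishik-Fursikov measure over each, and recombine; condition \eqref{rhomeankineticforvfm} and the finite-mean-energy hypothesis let one control the series. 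So assume $\mathrm{supp}\,\mu_0\subset B_H(R)$ with $R\ge R_0$. By upper regularity of $\mu_0$ on the Polish space $B_H(R)_\rw$, approximate $\mu_0$ weak-star by finitely-supported measures $\mu_0^{(n)}=\sum_i c_i^{(n)}\delta_{\bu_{0i}^{(n)}}$ with $\bu_{0i}^{(n)}\in B_H(R)$. For each initial datum $\bu_{0i}^{(n)}$ pick a global Leray-Hopf weak solution $\bu_i^{(n)}$ on $[t_0,\infty)$; by the invariance property \eqref{invarianceBR} it lies in $\Ucal_I(R)\subset\Ucal_I^\sharp(R)$. Set $\rho^{(n)}=\sum_i c_i^{(n)}\delta_{\bu_i^{(n)}}$, a Borel probability measure on the compact metric space $\Ucal_I^\sharp(R)$.

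Since $\Ucal_I^\sharp(R)$ is compact metrizable (Proposition~\ref{ucalitildercompact}), the set of Borel probability measures on it is weak-star sequentially compact, so a subsequence $\rho^{(n')}\stackrel{*}{\rightharpoonup}\rho$ for some Borel probability measure $\rho$ on $\Ucal_I^\sharp(R)$, which we then view as a measure on $\Ccal_\loc(I,H_\rw)$ carried by $\Ucal_I^\sharp$; this gives \eqref{rhocarrierforvfm}. Property $\Pi_{t_0}\rho=\mu_0$ follows because $\Pi_{t_0}$ is continuous, hence $\Pi_{t_0}\rho^{(n')}\stackrel{*}{\rightharpoonup}\Pi_{t_0}\rho$, while $\Pi_{t_0}\rho^{(n)}=\mu_0^{(n)}\stackrel{*}{\rightharpoonup}\mu_0$ by construction. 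The mean kinetic energy bound \eqref{rhomeankineticforvfm} comes from the uniform energy estimate \eqref{L2enstrophyestimatefinh} (or \eqref{energyestimate}): every weak solution in the support of $\rho^{(n)}$ satisfies, for $t\ge t_0$, a bound $|\bu(t)|_{L^2}^2\le |\bu(t_0)|_{L^2}^2 + C(t-t_0)$ with $C$ depending only on $\|\bbf\|_{L^\infty}$ and $\nu\lambda_1$; integrating against $\rho^{(n)}$ and using $\int|\bu(t_0)|_{L^2}^2\,\rd\mu_0^{(n)}\le\int|\bu(t_0)|_{L^2}^2\,\rd\mu_0<\infty$ (which one arranges to hold along the approximating sequence), then passing to the limit via lower semicontinuity of $\bu\mapsto|\bu(t)|_{L^2}^2$ under weak convergence in $H$, yields a locally bounded bound on $t\mapsto\int|\bu(t)|_{L^2}^2\,\rd\rho(\bu)$.

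The delicate point — the one I expect to be the main obstacle, and which was not handled in \cite{fmrt2001a} — is the continuity of the moments at the initial time, condition \eqref{meancontinuityatinitialtimeforvfm}. The difficulty is that $\bu\mapsto\psi(|\bu(t)|_{L^2}^2)$ is only weakly lower semicontinuous, not continuous, so one gets only $\liminf$ inequalities for free. The approach here is to exploit that each approximating solution is genuinely Leray-Hopf, so it satisfies the energy inequality \eqref{energyestimate} \emph{with $t'=t_0$}, hence the strengthened inequality \eqref{strengthenedenergyineq} with $t'=t_0$ (by \cite{dascaliuc}); averaging over $\rho^{(n)}$ gives the mean strengthened inequality with $t'=t_0$ for each $\rho^{(n)}$. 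One then wants to pass this to $\rho$. For the $\limsup\le$ direction at $t_0$: from the mean strengthened energy inequality with $t'=t_0$ applied to $\rho$ (obtained in the limit — this itself requires care, using Fatou on the dissipation term and weak-star convergence plus uniform integrability on the others, restricting to cylindrical-type $\psi(|P_m\bu|^2)$ and then letting $m\to\infty$ by monotone convergence as in Remark~\ref{r2.6a}\eqref{comc}), one gets $\limsup_{t\to t_0^+}\int\psi(|\bu(t)|_{L^2}^2)\,\rd\rho\le\int\psi(|\bu(t_0)|_{L^2}^2)\,\rd\rho$. For the matching $\liminf\ge$: use weak lower semicontinuity of $|\cdot|_{L^2}^2$ together with weak continuity of each $\bu\in\Ucal_I^\sharp$ at $t_0$, plus the approximation $\psi_\varepsilon(|P_m\bu|^2)$ of $\psi(|\bu|^2)$ exactly as in Remark~\ref{r2.6a}\eqref{comc}, to get $\int\psi(|\bu(t_0)|^2)\,\rd\rho\le\liminf_{t\to t_0^+}\int\psi(|\bu(t)|^2)\,\rd\rho$. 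Combining the two bounds gives the equality and finiteness in \eqref{meancontinuityatinitialtimeforvfm}, completing the proof.
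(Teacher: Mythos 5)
Your overall strategy is the same as the paper's: reduce to $\mu_0$ carried by a ball $B_H(R)$ via a decomposition into shells, approximate $\mu_0$ by finite convex combinations of Dirac measures, lift each atom to a global Leray--Hopf solution lying in $\Ucal_I^\sharp(R)$, extract a weak-star limit $\rho$ using the compactness of $\Ucal_I^\sharp(R)$, and then prove \eqref{meancontinuityatinitialtimeforvfm} by combining a Fatou/lower-semicontinuity bound for the $\liminf$ direction with the energy inequality at $t'=t_0$ (valid because the approximating trajectories are genuine Leray--Hopf solutions) for the $\limsup$ direction. All of that matches the paper.

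There is, however, one genuine gap, and it sits exactly at the point you flag as delicate. Both your $\limsup$ argument at $t_0$ and your verification of \eqref{rhomeankineticforvfm} require
\[
  \limsup_{n\rightarrow\infty}\int_H \psi(|\bu|_{L^2}^2)\,\rd\mu_0^{(n)}(\bu)
  \;\leq\; \int_H \psi(|\bu|_{L^2}^2)\,\rd\mu_0(\bu)
\]
for every admissible $\psi$ (this is \eqref{mu0ppsilimsuptomu0} in the paper). This does \emph{not} follow from weak-star convergence $\mu_0^{(n)}\stackrel{*}{\rightharpoonup}\mu_0$ on $B_H(R)_\rw$: since $\bu\mapsto\psi(|\bu|_{L^2}^2)$ is only weakly lower semicontinuous, weak-star convergence yields the opposite inequality. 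A concrete failure: take $\mu_0=\delta_0$ and $\mu_0^{(n)}=\delta_{\bw_n}$ with $\{\bw_n\}$ orthogonal of norm $R$; then $\mu_0^{(n)}\stackrel{*}{\rightharpoonup}\mu_0$ in $B_H(R)_\rw$ but $\int|\bu|_{L^2}^2\,\rd\mu_0^{(n)}=R^2\not\rightarrow 0$, and the resulting $\rho$ need not satisfy \eqref{meancontinuityatinitialtimeforvfm}. Your parenthetical ``which one arranges to hold along the approximating sequence'' is precisely the missing construction, and ``upper regularity'' of $\mu_0$ does not supply it. The paper arranges it by first replacing $\mu_0$ with its Galerkin projections $P_m\mu_0$, applying Krein--Milman inside $P_mB_H(R)$ where $|P_m\bu|_{L^2}=|\bu|_{L^2}$ on the carrier while $|P_m\bu|_{L^2}\leq|\bu|_{L^2}$ in general and $\psi$ is nondecreasing, and then performing a diagonal selection over a countable dense family $\{\psi_\ell\}$ so that the $\limsup$ bound holds for all $\psi$ simultaneously. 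Without some such device, the passage to the limit of the mean energy inequality at $t'=t_0$ — on which your $\limsup\leq$ half of \eqref{meancontinuityatinitialtimeforvfm} rests — does not close: the term $\int\psi(|\bu(t_0)|_{L^2}^2)\,\rd\rho^{(n)}$ on the right-hand side can only be controlled from above by this arranged property, not by ``uniform integrability'' or by the $P_m$/monotone-convergence device (which again gives bounds in the wrong direction for that term).
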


\begin{proof} 
Let us first consider the case in which $\mu_0$ is carried
by a ball $B_H(R)$ with $R\geq R_0$, i.e. $\mu_0(B_H(R))=1$.

Consider the measures $P_m\mu_0$ given by $P_m\mu_0(E)=\mu_0(P_m^{-1} E)$,
for all Borel sets $E$ in $H$, where $P_m$, $m\in \NN$, are the Galerkin projectors. 
By the Lebesgue Dominated Convergence Theorem, we have 
\begin{equation}
  \label{mu0mconvergestomu0}
  \int_H \varphi(\bu) \;\rd P_m\mu_0(\bu) = \int_H \varphi(P_m\bu) \;\rd\mu_0(\bu) 
    \rightarrow \int_H \varphi(\bu)\;\rd\mu_0(\bu),
      \quad \forall \varphi\in \Ccal(B_H(R)_\rw), 
\end{equation}
which means that
\[ P_m\mu_0 \stackrel{*}{\rightharpoonup} \mu_0, \quad \text{as } m\rightarrow \infty.
\]

Since $P_mB_H(R)_\rw$ is compact and separable 
and $P_m\mu_0$ is carried by $P_m B_H(R)_\rw$,
it follows by the Krein-Milman Theorem that, for each $m\in \NN$, there exist
$J^{m,n}\in \NN$, $\theta_j^{m,n}\in \RR$, and $\bu_{0,j}^{m,n}\in P_m B_H(R)$
such that
\[ 0 < \theta_j^{m,n} \leq 1, \quad \sum_{j=1}^{J^{m,n}} \theta_j^{m,n} = 1,
\]
and
\begin{equation}
  \label{mu0mnconvergestomu0m}
  \mu_0^{m,n} \define \sum_{j=1}^{J^{m,n}} \theta_j^{m,n} \delta_{\bu_{0,j}^{m,n}} 
     \stackrel{*}{\rightharpoonup} P_m\mu_0, \quad \text{as } m\rightarrow \infty.
\end{equation}
Since for any nonnegative, nondecreasing, continuously-differentiable real-valued function 
$\psi:[0,\infty)\rightarrow \RR$ with bounded derivative, the function defined by 
\[ \varphi(\bu)=\psi(|P_m\bu|_{L^2}^2)
\]
belongs to $\Ccal(B_H(R)_\rw)$ for each $m\in \NN$, 
we have in particular that
\[  \lim_{n\rightarrow\infty}\int_H \psi(|P_m\bu|_{L^2}^2)\;\rd\mu_0^{m,n}(\bu) 
      = \int_H \psi(|P_m\bu|_{L^2}^2)\;\rd P_m\mu_0(\bu)
      = \int_H \psi(|P_m\bu|_{L^2}^2)\;\rd\mu_0(\bu).
\]
Since $\mu_0^{m,n}$ is carried by $P_m B_H(R)$, we have 
$|P_m\bu|_{L^2}=|\bu|_{L^2}$ for $\mu_0^{m,n}$-almost every $\bu$, so that
\begin{multline}
  \label{mu0mnpsiconvergestomu0m}
  \lim_{n\rightarrow \infty} \int_H \psi(|\bu|_{L^2}^2)\;\rd\mu_0^{m,n}(\bu) 
    = \lim_{n\rightarrow \infty} \int_H \psi(|P_m\bu|_{L^2}^2)\;\rd\mu_0^{m,n}(\bu) \\
    = \int_H \psi(|P_m\bu|_{L^2}^2)\;\rd\mu_0(\bu)
    \leq \int_H \psi(|\bu|_{L^2}^2)\;\rd\mu_0(\bu), \qquad \forall m\in \NN.
\end{multline}

Since $B_H(R)_\rw$ is compact and metrizable, the space $\Ccal(B_H(R)_\rw)$
is separable, and there exists a countable dense set $\{\varphi_\ell\}_{\ell\in \NN}$ 
in $\Ccal(B_H(R)_\rw)$. Let also $\{\psi_\ell\}_\ell$ be a countable dense set,
with respect to the uniform topology of $\Ccal([0,\infty),\RR)$, in the space
of nonnegative, nondecreasing, continuously-differentiable real-valued functions 
with bounded derivative. 

For each $p\in \NN$, choose, thanks to \eqref{mu0mconvergestomu0}, an index
$m_p\in \NN$ such that 
\begin{equation}
  \label{mu0mpellconvergesmu0ell}
  \left| \int_H \varphi_\ell(\bu) \;\rd P_{m_p}\mu_0(\bu) - \int_H \varphi_\ell(\bu)\;\rd\mu_0(\bu) \right|
    \leq \frac{1}{2p}, \quad \forall \ell=1,\ldots, p.
\end{equation}
Then, using \eqref{mu0mnconvergestomu0m} and \eqref{mu0mnpsiconvergestomu0m}, 
choose $n_p$ such that
\begin{equation}
  \label{mu0mnpellconvergesmu0mpell}
  \left| \int_H \varphi_\ell(\bu) \;\rd\mu_0^{m_p,n_p}(\bu) 
    - \int_H \varphi_\ell(\bu)\;\rd P_{m_p}\mu_0(\bu) \right|
    \leq \frac{1}{2p}, \quad \forall \ell=1,\ldots, p,
\end{equation}
and 
\begin{equation}
  \int_H \psi_\ell(|\bu|_{L^2}^2)\;\rd\mu_0^{m_p,n_p}(\bu) 
    \leq \int_H \psi_\ell(|\bu|_{L^2}^2)\;\rd\mu_0(\bu) + \frac{1}{p}, \quad \forall \ell=1,\ldots, p.
\end{equation}

Set $\mu_0^p=\mu_0^{m_p,n_p}$. Using \eqref{mu0mpellconvergesmu0ell} 
and \eqref{mu0mnpellconvergesmu0mpell}, it follows that 
for each $\ell\in \NN$ and $p\geq \ell$,
\begin{equation}
  \label{mu0pelllimsupmu0}
  \left| \int_H \varphi_\ell(\bu) \;\rd \mu_0^p(\bu) - \int_H \varphi_\ell(\bu)\;\rd\mu_0(\bu) \right|
    \leq \frac{1}{2p}+\frac{1}{2p} = \frac{1}{p}.
\end{equation}
Since $\{\varphi_\ell\}_\ell$ is dense in $\Ccal(B_H(R)_\rw)$ and the measures are
probabilities, we find that
\begin{equation}
  \int_H \varphi(\bu)\;\rd\mu_0^p(\bu) \rightarrow \int_H \varphi(\bu)\;\rd\mu_0(\bu), 
    \quad \text{as } p\rightarrow \infty, \;\forall \varphi\in \Ccal(B_H(R)_\rw),
\end{equation}
which means that
\begin{equation}
  \label{mu0pconvergestomu0}
  \mu_0^p  \stackrel{*}{\rightharpoonup} \mu_0, \quad \text{as } p\rightarrow \infty.
\end{equation}
Moreover, it follows from \eqref{mu0pelllimsupmu0} and the density of $\{\psi_\ell\}_\ell$
that
\begin{equation}
  \label{mu0ppsilimsuptomu0}
  \limsup_{p\rightarrow\infty} \int_H \psi(|\bu|_{L^2}^2) \;\rd\mu_0^p(\bu)
     \leq \int_H \psi(|\bu|_{L^2}^2) \;\rd\mu_0(\bu),
\end{equation}
for all nonnegative, nondecreasing, continuously-differentiable real-valued functions 
$\psi:[0,\infty)\rightarrow \RR$ with bounded derivative. We now use properties
\eqref{mu0pconvergestomu0}, and \eqref{mu0ppsilimsuptomu0}, along with
the fact that $\mu_0^p$ is a convex combination of Dirac deltas
(given by \eqref{mu0mnconvergestomu0m}), to construct 
a Vishik-Fursikov measure.

For each $p\in \NN$, let $\bu^p$ be a weak solution on $[t_0,\infty)$ 
with initial condition $\bu_j^p(t_0)=\bu_{0,j}^{m_p,n_p}$. Since $\bu_{0,j}^{m_p,n_p}$
is bounded in $H$ by $R$, with $R\geq R_0$, it follows from the energy estimate 
\eqref{energyestimate} that $\bu^p(t)\in B_H(R)$ for all $t\geq t_0$, $p\in \NN$, 
$j=1,\ldots, J^{m_p,n_p}$.

Consider now the measures
\[ \rho^p = \sum_{j=1}^{J^{m_p,n_p}} \theta_j^{m_p,n_p} \delta_{\bu_j^p},
\]
which are Borel probability measures on $\Ccal_\loc([t_0,\infty),H_\rw)$ carried by
$\Ucal_{[t_0,\infty)}^\sharp(R)$. According to Proposition \ref{ucalitildercompact}, 
the space $\Ucal_{[t_0,\infty)}^\sharp(R)$ is a compact metric space. 
Therefore, there exists $\rho\in \Ucal_{[t_0,\infty)}^\sharp(R)$ which is
the weak-star limit of the sequence (or a subsequence if necessary) 
of measures $\{\rho^p\}_p$, i.e.
\begin{equation}
  \label{rhopconvergestorho}
  \rho^p \stackrel{*}{\rightharpoonup} \rho, \quad \text{as } p\rightarrow \infty.
\end{equation}

The measure $\rho$ is our candidate for the desired Vishik-Fursikov measure.
We already have that $\rho$ is carried by $\Ucal_{[t_0,\infty)}^\sharp(R)
\subset \Ucal_{[t_0,\infty)}^\sharp$, so that condition \eqref{rhocarrierforvfm} of the
Definition \ref{defvfmeasure} of a Vishik-Fursikov measure
is satisfied. Since $(\bu,t)\mapsto |\bu(t)|_{L^2}^2$ is a Borel function on
$\Ccal_\loc([t_0,\infty),H_\rw)\times [t_0,\infty)$ and
\[ \int_{\Ucal_{[t_0,\infty)}^\sharp} |\bu(t)|_{L^2}^2\;\rd\rho(\bu)
     = \int_{\Ucal_{[t_0,\infty)}^\sharp(R)} |\bu(t)|_{L^2}^2\;\rd\rho(\bu)
     \leq R,
\]
we see that condition \eqref{rhomeankineticforvfm} 
of the Definition \ref{defvfmeasure} is also satisfied. 

Before proving condition \eqref{meancontinuityatinitialtimeforvfm}, 
let us show that the initial condition
$\Pi_{t_0}\rho=\mu_0$ is satisfied. Indeed, we have from 
\eqref{mu0pconvergestomu0} that 
$\Pi_{t_0}\rho^p = \mu^p \stackrel{*}{\rightharpoonup} \mu_0$
on $B_H(R)_\rw$.
On the other hand, since $\Pi_{t_0}$ is weakly continuous,
we have from \eqref{rhopconvergestorho} that 
$\Pi_{t_0}\rho^p\stackrel{*}{\rightharpoonup}\Pi_{t_0}\rho$.
Thus, 
\[ \Pi_{t_0}\rho=\mu_0.
\]

We now prove condition \eqref{meancontinuityatinitialtimeforvfm}. 
Let $\psi:[0,\infty)\rightarrow \RR$ be a nonnegative, 
nondecreasing, continuously-differentiable real-valued function with bounded derivative.
Since any $\bu\in \Ucal_{[t_0,\infty)}^\sharp(R)$ is weakly continuous at $t_0$, it
follows from the Fatou Lemma that
\begin{equation}
  \label{lscpsirho}
   \int_{\Ucal_{[t_0,\infty)}^\sharp(R)} \psi(|\bu(t_0)|_{L^2}^2) \;\rd\rho(\bu) 
     \leq \liminf_{t\rightarrow t_0^+} \int_{\Ucal_{[t_0,\infty)}^\sharp(R)} \psi(|\bu(t)|_{L^2}^2) 
       \;\rd\rho(\bu).
\end{equation}

Now, since $\varphi(\bu)=\psi(|P_m\bu(t)|_{L^2}^2)$ is continuous on $\Ucal_{[t_0,\infty)}^\sharp(R)$ 
for each $m\in \NN$, we have that
\begin{multline*}
  \int_{\Ucal_{[t_0,\infty)}^\sharp(R)} \psi(|P_m\bu(t)|_{L^2}^2)\;\rd\rho(\bu)
      = \lim_{p\rightarrow\infty}\int_{\Ucal_{[t_0,\infty)}^\sharp(R)} 
              \psi(|P_m\bu(t)|_{L^2}^2)\;\rd\rho^p(\bu) \\
      \leq \liminf_{p\rightarrow\infty}\int_{\Ucal_{[t_0,\infty)}^\sharp(R)} 
              \psi(|\bu(t)|_{L^2}^2)\;\rd\rho^p(\bu).
\end{multline*}
Using then the Monotone Convergence Theorem we find that
\begin{multline*}
  \int_{\Ucal_{[t_0,\infty)}^\sharp(R)} \psi(|\bu(t)|_{L^2}^2)\;\rd\rho(\bu)
     = \lim_{m\rightarrow\infty} \int_{\Ucal_{[t_0,\infty)}^\sharp(R)} \psi(|P_m\bu(t)|_{L^2}^2)\;\rd\rho(\bu) \\
     \leq \liminf_{p\rightarrow\infty}\int_{\Ucal_{[t_0,\infty)}^\sharp(R)} \psi(|\bu(t)|_{L^2}^2)\;\rd\rho^p(\bu).
\end{multline*}      

Using the fact that $\rho^p$ is actually carried by $\Ucal_{[t_0,\infty)}(R)$
(since it is a convex combination of Dirac deltas carried by weak solutions 
defined on $[t_0,\infty)$), the strengthened energy inequality \eqref{strengthenedenergyineq}
holds $\rho^p$-almost everywhere and we find the uniform estimate
\[ \psi(|\bu(t)|_{L^2}^2) \leq \psi(|\bu(t_0)|_{L^2}^2) 
  + \frac{1}{\nu\lambda_1}\|\bbf\|_{L^\infty(t_0,\infty;H)}^2(\sup_{r\in \RR}\psi'(r))(t-t_0),
\]
for $\rho^p$-almost every $\bu$.
Thus, 
\begin{multline}
  \label{uscpsirhop}
  \int_{\Ucal_{[t_0,\infty)}^\sharp(R)} \psi(|\bu(t)|_{L^2}^2)\;\rd\rho^p(\bu)
     \leq \int_{\Ucal_{[t_0,\infty)}^\sharp(R)} \psi(|\bu(t_0)|_{L^2}^2)\;\rd\rho^p(\bu) \\
       + \frac{1}{\nu\lambda_1}\|\bbf\|_{L^\infty(t_0,\infty;H)}^2(\sup_{r\in \RR}\psi'(r))(t-t_0).
\end{multline}
Therefore,
\begin{equation}
  \label{uscpsirho}
  \begin{aligned}
  \limsup_{t\rightarrow t_0^+} & \int_{\Ucal_{[t_0,\infty)}^\sharp(R)} \psi(|\bu(t)|_{L^2}^2)
    \;\rd\rho(\bu) \\
     & \leq \limsup_{t\rightarrow t_0^+}
        \liminf_{p\rightarrow\infty}\int_{\Ucal_{[t_0,\infty)}^\sharp(R)} \psi(|\bu(t)|_{L^2}^2)
          \;\rd\rho^p(\bu) \\
      & \leq \limsup_{t\rightarrow t_0^+} \left( 
         \liminf_{p\rightarrow\infty}\int_{\Ucal_{[t_0,\infty)}^\sharp(R)} \psi(|\bu(t_0)|_{L^2}^2)
           \;\rd\rho^p(\bu) \right. \\
      & \qquad \left.            + \frac{1}{\nu\lambda_1}\|\bbf\|_{L^\infty(t_0,\infty;H)}^2
              \sup_{r\in \RR}\psi'(r)(t-t_0)\right) \\
      & = \liminf_{p\rightarrow\infty}\int_{\Ucal_{[t_0,\infty)}^\sharp(R)} \psi(|\bu(t_0)|_{L^2}^2)
        \;\rd\rho^p(\bu) \\
      & \leq \int_{\Ucal_{[t_0,\infty)}^\sharp(R)} \psi(|\bu(t_0)|_{L^2}^2)\;\rd\rho(\bu).
  \end{aligned}
\end{equation}
Putting \eqref{lscpsirho} and \eqref{uscpsirho} together proves condition 
\eqref{meancontinuityatinitialtimeforvfm}. 
This completes the proof in the case in which $\mu_0$ is carried
by a bounded set in $H$. Note also that taking $\psi$ identically $1$ in \eqref{uscpsirhop}
and considering the Galerkin projetor $P_m$ we have
\[  \int_{\Ucal_{[t_0,\infty)}^\sharp(R)} |P_m\bu(t)|_{L^2}^2\;\rd\rho^p(\bu)
     \leq \int_{\Ucal_{[t_0,\infty)}^\sharp(R)} |\bu(t_0)|_{L^2}^2\;\rd\rho^p(\bu) \\
       + \frac{1}{\nu\lambda_1}\|\bbf\|_{L^\infty(t_0,\infty;H)}^2(t-t_0).
\]
Then, passing to the limit as $p\rightarrow \infty$ using \eqref{mu0ppsilimsuptomu0} and
the fact that $\bu\mapsto |P_m\bu(t)|_{L^2}^2$ is continuous, we find
\[ \int_{\Ucal_{[t_0,\infty)}^\sharp(R)} |P_m\bu(t)|_{L^2}^2\;\rd\rho(\bu)
     \leq \int_{\Ucal_{[t_0,\infty)}^\sharp(R)} |\bu(t_0)|_{L^2}^2\;\rd\rho(\bu) \\
       + \frac{1}{\nu\lambda_1}\|\bbf\|_{L^\infty(t_0,\infty;H)}^2(t-t_0).
\]
Taking the limit as $m\rightarrow\infty$ and using the Monotone Convergence
Theorem we obtain
\begin{multline}
  \label{energyestimateforrhoR}
  \int_{\Ucal_{[t_0,\infty)}^\sharp(R)} |\bu(t)|_{L^2}^2\;\rd\rho(\bu)
     \leq \int_{\Ucal_{[t_0,\infty)}^\sharp(R)} |\bu(t_0)|_{L^2}^2\;\rd\rho(\bu) \\
     + \frac{1}{\nu\lambda_1}\|\bbf\|_{L^\infty(t_0,\infty;H)}^2(t-t_0), \;\forall t\geq t_0.
\end{multline}

Consider now the case in which $\mu_0$ is not carried by any bounded set in $H$.
Then, there exists an increasing sequence $\{R_k\}_k$, $R_k\geq R_0$, with 
$R_k\rightarrow \infty$ and such that $\mu_0(A_k)>0$ for every $k$, where
$A_1=B_H(R_1)$ and $A_k=B_H(R_k)\setminus B_H(R_{k-1})$, for $k\geq 2$.
We decompose $\mu_0$ according to
\[ \mu_0 = \sum_{k\in \NN} \mu_0^k, 
\]
where $\mu_0^k$ is given by $\mu_0^k(E) = \mu_0(E\cap A_k)$,
for every Borel set $E$ in $H$, and $k\in\NN$.

Now, since by construction $\mu(A_k)>0$, we normalize each $\mu_0^k$ to a probability measure by
$\bar\mu_0^k(E)=\mu_0^k(E)/\mu_0(A_k)$, for every Borel set $E$ in $H$.
With the procedure above, we construct Vishik-Fursikov measures
$\bar\rho^k$ on $\Ucal_I^\sharp(R_k)$ with $\Pi_{t_0}\bar\rho^k=\bar\mu_0^k(E)$ 
and satisfying
\eqref{energyestimateforrhoR}. Set
\[ \rho = \sum_{k\in\NN} \mu_0(A_k)\bar\rho^k,
\]
which is clearly a Borel probability measure. Since each $\bar\rho^k$ is
carried by $\Ucal_{[t_0,\infty)}^\sharp(R_k)$, 
then $\rho$ is carried by $\Ucal_{[t_0,\infty)}^\sharp$, 
which proves condition \eqref{rhocarrierforvfm}.

From \eqref{energyestimateforrhoR} for $\bar\rho^k$, we deduce that
\begin{equation}
  \label{energyestimateforrho}
  \int_{\Ucal_{[t_0,\infty)}^\sharp} |\bu(t)|_{L^2}^2\;\rd\rho(\bu)
     \leq \int_{\Ucal_{[t_0,\infty)}^\sharp} |\bu(t_0)|_{L^2}^2\;\rd\rho(\bu)
     + \frac{1}{\nu\lambda_1}\|\bbf\|_{L^\infty(t_0,\infty;H)}^2(t-t_0), \qquad \forall t\geq t_0,
\end{equation}
which proves condition \eqref{rhomeankineticforvfm}. 

It remains to prove condition \eqref{meancontinuityatinitialtimeforvfm}. 
Let $\psi$ be as in \eqref{meancontinuityatinitialtimeforvfm} and assume, without
loss of generality, that $\psi(0)=0$. We write
\[  \int_{\Ucal_{[t_0,\infty)}^\sharp} \psi(|\bu(t)|_{L^2}^2) \;\rd\rho(\bu)
    = \sum_{k\in\NN} \mu_0(A_k) \int_{\Ucal_{[t_0,\infty)}^\sharp} \psi(|\bu(t)|_{L^2}^2) 
    \;\rd\bar\rho^k(\bu),
\]
and note that for a given $K\in \NN$, since each $\bar\rho^k$ is a Vishik-Fursikov measure,
\begin{multline}
  \label{conditioniiiuptoK}
  \lim_{t\rightarrow t_0^+} 
      \sum_{k=1}^K \mu_0(A_k) \int_{\Ucal_{[t_0,\infty)}^\sharp} \psi(|\bu(t)|_{L^2}^2) 
      \;\rd\bar\rho^k(\bu) \\
    = \sum_{k=1}^K \mu_0(A_k) \int_{\Ucal_{[t_0,\infty)}^\sharp} \psi(|\bu(t_0)|_{L^2}^2) \;\rd\bar\rho^k(\bu).
\end{multline}
The remaining terms we estimate using \eqref{energyestimateforrhoR}
for $\bar\rho^k$ and the assumption that $\psi'$ is bounded:
\begin{align*}
  \sum_{k=K+1}^\infty &\mu_0(A_k) \int_{\Ucal_{[t_0,\infty)}^\sharp} \psi(|\bu(t)|_{L^2}^2) 
          \;\rd\bar\rho^k(\bu) \\
    & \leq \sum_{k=K+1}^\infty \mu_0(A_k) (\sup_{r\in\RR} \psi'(r)) \left(
          \int_{\Ucal_{[t_0,\infty)}^\sharp} |\bu(t_0)|_{L^2}^2 \;\rd\bar\rho^k(\bu) \right. \\
     & \qquad \qquad \qquad \qquad \qquad \qquad \qquad \qquad \left.
             + \frac{1}{\nu\lambda_1}\|\bbf\|_{L^\infty(t_0,\infty;H)}^2(t-t_0)\right) \\
    & = \sum_{k=K+1}^\infty (\sup_{r\in\RR} \psi'(r)) \left( \int_H |\bu(t_0)|_{L^2}^2 
             \;\rd\mu_0^k(\bu) 
             + \mu_0(A_k)\frac{1}{\nu\lambda_1}\|\bbf\|_{L^\infty(t_0,\infty;H)}^2(t-t_0)\right) \\
    & = \sup_{r\in\RR} \psi'(r) \left( \int_{H\setminus B_H(R_K)} |\bu(t_0)|_{L^2}^2 
             \;\rd\mu_0^k(\bu) + \right. \\
     & \qquad \qquad \qquad \qquad \qquad \left. \mu_0(H\setminus B_H(R_K))
                 \frac{1}{\nu\lambda_1}\|\bbf\|_{L^\infty(t_0,\infty;H)}^2(t-t_0)\right).
\end{align*}
Since the right hand side above goes to zero uniformly in $t$, as $K$ goes to infinity, and taking 
\eqref{conditioniiiuptoK} into consideration we see that
\[  \int_{\Ucal_{[t_0,\infty)}^\sharp} \psi(|\bu(t)|_{L^2}^2) \;\rd\rho(\bu)
      \rightarrow \int_{\Ucal_{[t_0,\infty)}^\sharp} \psi(|\bu(t_0)|_{L^2}^2) \;\rd\rho(\bu)
\]
as $t\rightarrow t_0^+$, which proves condition \eqref{meancontinuityatinitialtimeforvfm}.
\end{proof}
\medskip

\subsection{Mean energy inequality for Vishik-Fursikov measures}

Notice that in the Definition \ref{defvfmeasure} of a Vishik-Fursikov
measure, there is no explicit condition for some sort of mean
energy inequality. It turns out that this is hidden in the hypothesis
that the measure is concentrated on weak solutions, for which the
individual energy inequality holds. In this direction, we have the
following result.
    
\begin{prop}
  \label{propvsenoughinw}
  Let $I\subset \RR$ be an arbitrary interval.
  Let $\rho$ be a Borel probability measure on $\Ccal_\loc(I,H_\rw)$ and
  suppose $\rho$ is carried by $\Ucal_I^\sharp$, with
  \begin{equation}
    \int_{\Ucal_I^\sharp} |\bu(t)|_{L^2}^2 \;\rd\rho(\bu) < \infty,
  \end{equation}
  for almost all $t\in I$. Then,
  \begin{align}
    \label{rhoenergyislinftyloc}
    & t\mapsto \int_{\Ucal_I^\sharp} |\bu(t)|_{L^2}^2 \;\rd\rho(\bu) \in L_\loc^\infty(I), \\
    \label{rhoenstrophyisl1loc}
    & t\mapsto \int_{\Ucal_I^\sharp} \|\bu(t)\|_{H^1}^2 \;\rd\rho(\bu) \in L_\loc^1(I),
  \end{align}
  and the strengthened mean energy inequality holds on $I$, i.e. there exists a set 
  $I'$ of full measure in $I$ such that for any nonnegative, 
  nondecreasing, continuously-differentiable real-valued function 
  $\psi:[0,\infty)\rightarrow \RR$ with bounded derivative, we have, for all
  $t'\in I'$, and all $t\in I$ with $t\geq t'$, 
  \begin{multline}
    \label{statenergyineqforI}
    \int_{\Ucal_I^\sharp} \left\{ \frac{1}{2}\psi(|\bu(t)|_{L^2}^2) 
       +\nu\int_{t'}^t \psi'(|\bu(s)|_{L^2}^2)\|\bu(s)\|_{H^1}^2 \;\rd s \right\} \;\rd\rho(\bu) \\
          \leq  \int_{\Ucal_I^\sharp} \left\{ \frac{1}{2}\psi(|\bu(t')|_{L^2}^2)
               +  \int_{t'}^t \psi'(|\bu(s)|_{L^2}^2)(\bbf(s),\bu(s))_{L^2} 
               \;\rd s \right\} \;\rd\rho(\bu).
  \end{multline}
\end{prop}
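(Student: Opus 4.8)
\emph{The plan} is to deduce all three conclusions from the corresponding pointwise estimates for individual Leray--Hopf weak solutions, namely \eqref{L2enstrophyestimatefinh} and the strengthened energy inequality \eqref{strengthenedenergyineq}, by integrating these over $\rho$ and invoking a Fubini--Tonelli argument. The first thing I would settle is joint measurability. One shows that $(s,\bu)\mapsto |\bu(s)|_{L^2}^2$ and $(s,\bu)\mapsto \|\bu(s)\|_{H^1}^2$ are Borel on $I\times\Ucal_I^\sharp$: for the first, exactly as in Lemma~\ref{lemphicondition}, by writing it as the pointwise limit of the jointly continuous functions $(s,\bu)\mapsto |P_m\bu(s)|_{L^2}^2$; for the second, by the monotone limit $(s,\bu)\mapsto \sum_{j\le m}\lambda_j|(\bu(s),\bw_j)|_{L^2}^2\uparrow \|\bu(s)\|_{H^1}^2$, whose partial sums are jointly continuous. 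Consequently $g(t)\define\int_{\Ucal_I^\sharp}|\bu(t)|_{L^2}^2\,\rd\rho(\bu)$ and $g_V(t)\define\int_{\Ucal_I^\sharp}\|\bu(t)\|_{H^1}^2\,\rd\rho(\bu)$ are measurable in $t$, and likewise for the integrands in \eqref{statenergyineqforI} once $\psi$ is fixed, using $|\psi'|\le\|\psi'\|_\infty$ and $|(\bbf(s),\bu(s))_{L^2}|\le\|\bbf\|_{L^\infty(I;H)}|\bu(s)|_{L^2}$.

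The second step is a ``good initial times'' lemma: I claim that for almost every $t'\in I$ the inequalities \eqref{L2enstrophyestimatefinh} and \eqref{strengthenedenergyineq} hold, \emph{for that fixed} $t'$, for $\rho$-almost every $\bu\in\Ucal_I^\sharp$ and all $t\in I$ with $t>t'$. Indeed, for each $\bu$ the allowed times $t'$ are precisely its points of strong continuity from the right, which by Lemma~\ref{criterionstrongcontinuityright} form the set where $\liminf_{\tau\to0^+}\frac1\tau\int_{t'}^{t'+\tau}|\bu(s)|_{L^2}^2\,\rd s\le|\bu(t')|_{L^2}^2$; since $(t',\bu)\mapsto\frac1\tau\int_{t'}^{t'+\tau}|\bu(s)|_{L^2}^2\,\rd s-|\bu(t')|_{L^2}^2$ is jointly measurable and the $\liminf$ may be taken over rational $\tau$, the ``bad'' set $\Ncal\subset I\times\Ucal_I^\sharp$ of pairs for which $t'$ is not such a point is jointly measurable, with every vertical section $\Ncal_\bu$ Lebesgue-null. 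By Tonelli $\Ncal$ has $(\text{Leb}\times\rho)$-measure zero, so almost every horizontal section $\{\bu:(t',\bu)\in\Ncal\}$ is $\rho$-null, proving the claim. Intersecting this full-measure set of $t'$ with the (full-measure) set where $g(t')<\infty$ yields a full-measure set of admissible initial times.

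Third, fixing such a $t'$ and integrating \eqref{L2enstrophyestimatefinh} over $\rho$ (Tonelli applies, all terms being nonnegative) gives
\begin{equation*}
  g(t) + \nu\int_{t'}^t g_V(s)\,\rd s \;\le\; g(t') + \frac{1}{\nu\lambda_1}\|\bbf\|_{L^\infty(I;H)}^2\,(t-t'),\qquad \forall\, t\in I,\ t>t',
\end{equation*}
and since $t'$ may be taken arbitrarily close to the left endpoint of $I$, this yields $g\in L^\infty_\loc(I)$ and $g_V\in L^1_\loc(I)$, i.e.\ \eqref{rhoenergyislinftyloc}--\eqref{rhoenstrophyisl1loc}. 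With $g\in L^\infty_\loc(I)$ in hand, the forcing double integral in \eqref{strengthenedenergyineq} is absolutely convergent, since by Cauchy--Schwarz $\int_{\Ucal_I^\sharp}|\bu(s)|_{L^2}\,\rd\rho(\bu)\le g(s)^{1/2}$, which is locally integrable; hence Fubini applies, and integrating \eqref{strengthenedenergyineq} over $\rho$ for a fixed admissible $t'$ and a fixed $\psi$ produces \eqref{statenergyineqforI} for that $t'$ and $\psi$.

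Finally, to obtain a single exceptional null set valid for all admissible $\psi$ at once, I would fix a countable family $\{\psi_\ell\}$ such that any admissible $\psi$ is the pointwise limit of a subsequence of $\{\psi_\ell\}$ along which $\psi_\ell'\to\psi'$ pointwise with uniformly bounded derivatives; taking $I'$ to be the intersection over $\ell$ of the full-measure sets produced above (together with the admissible-times set of Step~2), the inequality \eqref{statenergyineqforI} passes to the limit $\psi_\ell\to\psi$ by dominated convergence, using $g\in L^\infty_\loc(I)$, $g_V\in L^1_\loc(I)$ and the uniform bound on $\psi_\ell'$. I expect the main obstacle to be precisely the measurability bookkeeping of Steps~1--2: establishing joint Borel measurability of the energy and enstrophy functionals on the merely $\sigma$-compact (not Polish) space $\Ucal_I^\sharp$, and running the Fubini argument that converts the $\bu$-dependent sets of good times into one full-measure set of times; once that is done, the remaining steps are routine manipulations of the individual energy inequalities.
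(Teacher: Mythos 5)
Your proof is correct, but it takes a genuinely different route from the paper's. The paper localizes $\rho$ to the compact metrizable sets $\Ucal_J^\sharp(R)$ for $J\Subset I$, applies the Krein--Milman theorem to approximate the normalized restriction weak-star by finite convex combinations of Dirac measures carried by individual weak solutions, writes the strengthened energy inequality for those combinations (the good-times sets of the countably many solutions involved still intersect in a set of full measure), integrates in $t'$ over $[t_1,t_2]$ so that the inequality survives the weak-star limit (lower-semicontinuous terms handled by monotone convergence through the Galerkin projectors $P_m$), and then recovers the pointwise-in-$t'$ statement by Lebesgue differentiation as $t_2\to t_1^+$ --- which is exactly why the paper needs the countable dense family $\{\psi_\ell\}$ to extract a single exceptional null set, and a final monotone passage $R\to\infty$. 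You instead integrate the individual inequalities \eqref{L2enstrophyestimatefinh} and \eqref{strengthenedenergyineq} directly against $\rho$, the one nontrivial ingredient being the Fubini--Tonelli argument showing that the jointly measurable ``bad'' set of pairs $(t',\bu)$ has $\rho$-null horizontal sections for a.e.\ $t'$; this is legitimate, since the relevant functionals are pointwise (or monotone) limits of functions jointly continuous in $(t,\bu)$, hence product-measurable because $I$ is separable metric, the map $\tau\mapsto\frac{1}{\tau}\int_{t'}^{t'+\tau}|\bu(s)|_{L^2}^2\,\rd s$ is continuous so the $\liminf$ may indeed be taken over rational $\tau$, and Lemma \ref{criterionstrongcontinuityright} identifies the sections correctly. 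Your route is shorter and avoids both the localization and the Dirac approximation; what the paper's route buys is that it only ever tests $\rho$ against continuous functions and their monotone limits, so no product-measurability bookkeeping is needed, and the Krein--Milman machinery is in any case developed for reuse in Theorem \ref{charactvfr}. Note also that your final step with the family $\{\psi_\ell\}$ is superfluous in your scheme: the set of admissible $t'$ for \eqref{strengthenedenergyineq} consists of the points of strong right-continuity of $\bu$ and does not depend on $\psi$, so your exceptional set is already $\psi$-independent.
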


\begin{proof} 
  Let $J\subset I$ be a closed and bounded interval and recall
  the restriction operator $\Pi_J$, which is continuous from $\Ccal_\loc(I,H_\rw)$ 
  to $\Ccal(J,H_\rw)$. We know that $\Pi_J\Ucal_I^\sharp \subset \Ucal_J^\sharp$.  
  We argue that it suffices to show the result for the measure $\rho_J=\Pi_J\rho$, 
  i.e. that
  \begin{multline}
    \label{statenergyineqforJ}
    \int_{\Ucal_J^\sharp} \left\{ \frac{1}{2}\psi(|\bu(t)|_{L^2}^2) 
       +\nu\int_{t'}^t \psi'(|\bu(s)|_{L^2}^2)\|\bu(s)\|_{H^1}^2 \;\rd s \right\} \;\rd\rho_J(\bu) \\
          \leq  \int_{\Ucal_J^\sharp} \left\{ \frac{1}{2}\psi(|\bu(t')|_{L^2}^2)
               +  \int_{t'}^t \psi'(|\bu(s)|_{L^2}^2)(\bbf(s),\bu(s))_{L^2} \;\rd s \right\} 
                 \;\rd\rho_J(\bu)
  \end{multline}
  holds for almost all $t'\in J$ and all $t\in J$, $t\geq t'$, ,
  where $\psi:[0,\infty)\rightarrow \RR$ is an arbitrary nonnegative, 
  nondecreasing, continuously-differentiable real-valued function 
  with bounded derivative. 
  
  In fact, since $\Pi_J^{-1}\Ucal_J^\sharp \supset \Ucal_I^\sharp$ and
  $\rho$ is carried by $\Ucal_I^\sharp$, \eqref{statenergyineqforJ} 
  implies the corresponding result for $\rho$ and $\Ucal_I^\sharp$ but
  still for almost all $t'\in J$ and all $t\in J$ with $t\geq t'$. But since
  $J\subset I$ is an arbitrary closed and bounded interval in $I$,
  the inequality extends to almost all $t'\in I$ and all $t\in I$ with $t\geq t'$,
  which is what we want to prove. Hence, we now only need to prove
  \eqref{statenergyineqforJ}

  Since each $\bu\in \Ucal_J^\sharp$ is weakly continuous and $J$ is compact
  it follows that $\bu$ is uniformly bounded in $H$ on $J$. Hence, we can write
  \[ \Ucal_J^\sharp = \bigcup_{k\in \NN}  \Ucal_J^\sharp(kR_0).
  \]
  Therefore, we first prove the strengthened mean energy inequality in each 
  $\Ucal_J^\sharp(R)$ with $R\geq R_0$, 
  i.e. that there exists $J'$ of full measure on $J$ such that
  \begin{multline*}
    \int_{\Ucal_J^\sharp(R)} \left\{ \frac{1}{2}\psi(|\bu(t)|_{L^2}^2)
      + \nu\int_{t'}^t \psi'(|\bu(s)|_{L^2}^2)\|\bu(s)\|_{H^1}^2 \;\rd s \right\} \;\rd\rho_J(\bu) \\
          \leq  \int_{\Ucal_J^\sharp(R)} \left\{ \frac{1}{2}\psi(|\bu(t')|_{L^2}^2)
               +  \int_{t'}^t \psi'(|\bu(s)|_{L^2}^2)(\bbf(s),\bu(s))_{L^2} \;\rd s \right\} 
                 \;\rd\rho_J(\bu)
  \end{multline*}
  holds for all $t'\in J'$ and all $t\in J$, $t\geq t'$, and for all $\psi$ as in the statement of
  the proposition. 
    
  We have seen in Proposition \ref{ucalitildercompact} that $\Ucal_J^\sharp(R)$ is a compact
  metric space (with the topology inherited from $\Ccal(J,H_\rw)$) 
  and is compactly embedded in $L_\loc^2(J,H)$.
    
  If $\rho(\Ucal_J^\sharp(R))=0$ the mean energy inequality is trivially 
  satisfied, so we need only consider the case in which $0<\rho(\Ucal_J^\sharp(R))\leq 1$,
  which necessarily happens for $R\geq R_0$ sufficiently large. 
   
  Let $\tilde\rho_J$ be the restriction of the measure $\rho_J$ to $\Ucal_J^\sharp(R)$
  normalized to a probability measure 
  (i.e. $\tilde\rho_J(E) = \rho_J(E\cap \Ucal_J^\sharp(R))/\rho_J(\Ucal_J^\sharp(R))$ 
  for any Borel set $E$ in $\Ucal_J^\sharp(R)$). Since $\Ucal_J^\sharp(R)$ is compact
  and separable we can apply the Krein-Milman Theorem to approximate $\tilde\rho_J$
  by finite convex combinations of Dirac measures concentrated on
  weak solutions $\bu_j\in \Ucal_J^\sharp(R)$, $j=1,\ldots, J(n),$ i.e.
  \[ \rho_n = \sum_{j=1}^{J(n)} \theta_j^{(n)} \delta_{\bu_j^{(n)}} 
        \stackrel{*}{\rightharpoonup} \tilde\rho_J, \quad \text{as } n\rightarrow \infty,
  \]
  with $0<\theta_j^{(n)}\leq 1$, $\sum_{j=1}^{J(n)}\theta_j^{(n)}=1$,
  where the convergence is in the weak-star sense:
  \[ \int_{\Ucal_J^\sharp(R)} \varphi(\bu)\;\rd\rho_n(\bu)
              \rightarrow \int_{\Ucal_J^\sharp(R)} \varphi(\bu)\;\rd\tilde\rho_J(\bu),
  \]
  for all $\varphi\in \Ccal(\Ucal_J^\sharp(R))$.

  Since each $\bu_j^{(n)}$ is a Leray-Hopf weak solution on $J$, 
  we have the individual strengthened energy inequality (see \eqref{strengthenedenergyineq})
  \begin{multline*}
     \frac{1}{2}\psi(|\bu_j^{(n)}(t)|_{L^2}^2) 
       + \nu \int_{t'}^t \psi'(|\bu_j^{(n)}(s)|_{L^2}^2)\|\bu_j^{(n)}(s)\|_{H^1}^2 \;\rd s \\
        \leq \frac{1}{2}\psi(|\bu_j^{(n)}(t')|_{L^2}^2) 
        + \int_{t'}^t \psi'(|\bu_j^n(s)|_{L^2}^2)(\bbf(s),\bu_j^{(n)}(s))_{L^2} \;\rd s,
  \end{multline*}
  for almost all $t'\in J$ and all $t\in J$, $t\geq t'$. The set $J_j^{(n)}$ 
  of allowed times $t'$ above depends on the solution (and not on $\psi$), but since we have 
  a countable family of solutions, the intersection $\tilde J = \cap_{j,n} J_j^{(n)}$
  of the allowed times is still of full measure in $J$, hence the energy inequality 
  above holds for almost all $t'$ in $J$, independently of $j,n\in \NN$. 
  Thus, considering the convex combination of the solutions we write
  \begin{multline}
    \label{meanenergyineqonJR}
    \int_{\Ucal_J^\sharp(R)} \left\{ \frac{1}{2}\psi(|\bu(t)|_{L^2}^2) 
        + \nu\int_{t'}^t \psi'(|\bu(s)|_{L^2}^2)\|\bu(s)\|_{H^1}^2 \;\rd s \right\} \;\rd\rho_n(\bu) \\
          \leq  \int_{\Ucal_J^\sharp(R)} \left\{ \frac{1}{2}\psi(|\bu(t')|_{L^2}^2)
               +  \int_{t'}^t \psi'(|\bu(s)|_{L^2}^2)(\bbf(s),\bu(s))_{L^2} \;\rd s \right\} 
               \;\rd\rho_n(\bu),
  \end{multline}
  for all $t'\in \tilde J$ and all $t\in J$ with $t\geq t'$.
  
  Integrate the relation \eqref{meanenergyineqonJR} in $t'$ to find
  \begin{multline}
    \label{meanenergyineqonJRt1t2}
    \int_{t_1}^{t_2}\int_{\Ucal_J^\sharp(R)} \left\{ \frac{1}{2}\psi(|\bu(t)|_{L^2}^2) 
        + \nu\int_{t'}^t \psi'(|\bu(s)|_{L^2}^2)\|\bu(s)\|_{H^1}^2 \;\rd s \right\} 
        \;\rd\rho_n(\bu)\;\rd t' \\
          \leq   \int_{t_1}^{t_2}\int_{\Ucal_J^\sharp(R)} \left\{ \frac{1}{2}\psi(|\bu(t')|_{L^2}^2)
               +  \int_{t'}^t \psi'(|\bu(s)|_{L^2}^2)(\bbf(s),\bu(s))_{L^2} \;\rd s \right\} 
               \;\rd\rho_n(\bu) \;\rd t',
  \end{multline}
  for all $t_1,t_2,t\in J$, $t_1\leq t_2\leq t$.

  Let $t_1,t_2\in J$, $t_1<t_2$, and consider the function
  \[ \varphi(\bu) = \int_{t_1}^{t_2} \psi(|\bu(t')|_{L^2}^2) \;\rd t',
  \]
  which is a well-defined real-valued Borel function on $\Ucal_J^\sharp(R)$, as
  the limit of the continuous functions $\varphi_m(\bu)=\varphi(P_m\bu)$.
  Since $\Ucal_J^\sharp(R)$ is compactly embedded
  in $L_\loc^2(J,H)$ (Proposition \ref{ucalitildercompact}), it also follows that $\varphi(\bu)$ 
  is a continuous
  function in $\Ucal_J^\sharp(R)$. Hence, since $\rho_n$ converges
  weak-star to $\tilde\rho_J$ we have, using the Fubini Theorem, that
  \begin{multline*}
     \int_{t_1}^{t_2} \int_{\Ucal_J^\sharp(R)} \psi(|\bu(t)|_{L^2}^2) \;\rd\rho_n(\bu) \;\rd t'
        = \int_{\Ucal_J^\sharp(R)} \varphi(\bu)  \;\rd\rho_n(\bu) \\
        \rightarrow \int_{\Ucal_J^\sharp(R)} \varphi(\bu)  \;\rd\tilde\rho_J(\bu) 
         = \int_{t_1}^{t_2} \int_{\Ucal_J^\sharp(R)} \psi(|\bu(t)|_{L^2}^2) 
         \;\rd\tilde\rho_J(\bu) \;\rd t'.
  \end{multline*}
  
  Consider now the functions
  \[ \varphi_m(\bu)= \int_{t'}^{t} \psi'(|\bu(s)|_{L^2}^2)\|P_m\bu(s)\|_{H^1}^2 \;\rd s.
  \]
  Since $\Ucal_J^\sharp(R)$ is compactly embedded in $L_\loc^2(J,H)$, it follows 
  that $\varphi(\bu)$ is a continuous function in $\Ucal_J^\sharp(R)$. Then, using
  the Monotone Convergence Theorem and the fact that $\psi'\geq 0$, we deduce that
  \begin{align*}
        \int_{\Ucal_J^\sharp(R)}  & \int_{t'}^{t} \psi'(|\bu(s)|_{L^2}^2)\|\bu(s)\|_{H^1}^2 
          \;\rd s \;\rd\tilde\rho_J(\bu) \\
         & = \lim_{m\rightarrow\infty} \int_{\Ucal_J^\sharp(R)} 
              \int_{t}^{t'} \psi'(|\bu(s)|_{L^2}^2)\|P_m\bu(s)\|_{H^1}^2 \;\rd s \;\rd\tilde\rho_J(\bu) \\
         & = \lim_{m\rightarrow\infty}
              \int_{\Ucal_J^\sharp(R)} \varphi_m(\bu) \;\rd\tilde\rho_J(\bu) \\    
         & = \lim_{m\rightarrow\infty} 
              \lim_{n\rightarrow \infty}  \int_{\Ucal_J^\sharp(R)} \varphi_m(\bu) \;\rd\rho_n(\bu) \\  
         & = \lim_{m\rightarrow\infty} 
              \lim_{n\rightarrow \infty} \int_{\Ucal_J^\sharp(R)}
                \int_{t}^{t'} \psi'(|\bu(s)|_{L^2}^2)\|P_m\bu(s)\|_{H^1}^2 \;\rd s \;\rd\rho_n(\bu)  \\
         & \leq \liminf_{n\rightarrow\infty} \int_{\Ucal_J^\sharp(R)}
           \int_{t}^{t'} \psi'(|\bu(s)|_{L^2}^2)\|\bu(s)\|_{H^1}^2 \;\rd s \;\rd\rho_n(\bu),
  \end{align*}
  for any $t,t'\in J$, $t'\leq t$. Similarly,
  \begin{multline*}
    \frac{1}{2}\int_{\Ucal_J^\sharp(R)}\psi(|\bu(t)|_{L^2}^2) \;\rd\tilde\rho_J(\bu)
        = \lim_{m\rightarrow \infty} \frac{1}{2}\int_{\Ucal_J^\sharp(R)}\psi(|P_m\bu(t)|_{L^2}^2) 
             \;\rd\tilde\rho_J(\bu) \\
        = \lim_{m\rightarrow \infty} \lim_{n\rightarrow \infty} 
             \frac{1}{2}\int_{\Ucal_J^\sharp(R)}\psi(|P_m\bu(t)|_{L^2}^2) \;\rd\rho_n(\bu)
        \leq \liminf_{n\rightarrow \infty} 
             \frac{1}{2}\int_{\Ucal_J^\sharp(R)}\psi(|\bu(t)|_{L^2}^2) \;\rd\rho_n(\bu).
  \end{multline*}
  On the other hand, since the function
  \[ \bu \mapsto \int_{t'}^t \psi'(|\bu(s)|_{L^2}^2)(\bbf(s),\bu(s))_{L^2} \;\rd s 
  \]
  is continuous on $\Ucal_J^\sharp(R)$, it follows that
  \begin{multline*}
    \int_{\Ucal_J^\sharp(R)} \int_{t'}^t \psi'(|\bu(s)|_{L^2}^2)(\bbf(s),\bu(s))_{L^2} 
      \;\rd s \;\rd \tilde\rho_J(\bu) \\
       = \lim_{n\rightarrow\infty} \int_{\Ucal_J^\sharp(R)} \int_{t'}^t \psi'(|\bu(s)|_{L^2}^2)
          (\bbf(s),\bu(s))_{L^2} \;\rd s \;\rd\rho_n(\bu).
  \end{multline*}
  
  Thus, passing to the limit in the mean energy equation for $\rho_n$
  we find that
  \begin{multline}
    \label{meanenergyineqonJRt1t2rhoJ}
    \int_{t_1}^{t_2}\int_{\Ucal_J^\sharp(R)} \left\{ \frac{1}{2}\psi(|\bu(t)|_{L^2}^2) 
       + \nu\int_{t'}^t \psi'(|\bu(s)|_{L^2}^2)\|\bu(s)\|_{H^1}^2 \;\rd s \right\} 
       \;\rd\tilde\rho_J(\bu)\;\rd t'\\
          \leq  \int_{t_1}^{t_2}\int_{\Ucal_J^\sharp(R)} \left\{ \frac{1}{2}\psi(|\bu(t')|_{L^2}^2)
               +  \int_{t'}^t \psi'(|\bu(s)|_{L^2}^2)(\bbf(s),\bu(s))_{L^2} \;\rd s \right\} 
               \;\rd\tilde\rho_J(\bu) \;\rd t',
  \end{multline}
  for all $t_1,t_2,t\in J$ with $t_1<t_2<t$.   
  
  The aim now is to divide \eqref{meanenergyineqonJRt1t2rhoJ} by $(t_2-t_1)$
  and take the limit as $t_2\rightarrow t_1^+$. The integrand (with respect to $t'$) 
  of the first term on the left hand side of \eqref{meanenergyineqonJRt1t2rhoJ} 
  is independent of $t'$, while the integrand of the second term in the left hand side and that 
  of the second term in the right hand side are continuous in $t'$ and present no further
  difficulties either. The only delicate term is the first one on the right hand side.
  
  For that, we consider a countable dense set $\{\psi_\ell\}_\ell$, 
  with respect to the uniform topology of $\Ccal([0,\infty),\RR)$, in the space
  of nonnegative, nondecreasing, continuously-differentiable real-valued function 
  with bounded derivative. For each $\ell$, the function
  \[  t \mapsto \int_{\Ucal_J^\sharp(R)} \frac{1}{2}\psi_\ell(|\bu(t)|_{L^2}^2) \;\rd\tilde\rho_J(\bu)
  \]
  is integrable on $J$, and hence, by the Lebesgue Differentiation Theorem, there exists
  a set $J_\ell$ of full measure in $J$ such that for all $t_1\in J_\ell$,
  \[ \frac{1}{t_2-t_1} \int_{t_1}^{t_2} \int_{\Ucal_J^\sharp(R)} 
       \frac{1}{2}\psi_\ell(|\bu(t)|_{L^2}^2) \;\rd\tilde\rho_J(\bu) 
       \rightarrow \int_{\Ucal_J^\sharp(R)} \frac{1}{2}\psi_\ell(|\bu(t_1)|_{L^2}^2) 
       \;\rd\tilde\rho_J(\bu),
       \quad \text{as } t_2\rightarrow t_1^+.
  \]
  Setting $J'=\bigcap_\ell J_\ell$, we have that $J'$ is of full
  measure and the convergence above holds for all $t_1\in J$ and all $\ell\in \NN$.
  Thus, dividing the expression \eqref{meanenergyineqonJRt1t2rhoJ} by $t_2-t_1$,
  with $\psi=\psi_\ell$ for each $\ell$, and passing to the limit as $t_2\rightarrow t_1^+$,
  we find that
  \begin{multline}
    \label{meanenergyineqonJRrhoJpsiell}
    \int_{\Ucal_J^\sharp(R)} \left\{ \frac{1}{2}\psi_\ell(|\bu(t)|_{L^2}^2) 
       + \nu\int_{t_1}^t \psi_\ell'(|\bu(s)|_{L^2}^2)\|\bu(s)\|_{H^1}^2 \;\rd s \right\} 
       \;\rd\tilde\rho_J(\bu)\\
          \leq  \int_{\Ucal_J^\sharp(R)} \left\{ \frac{1}{2}\psi_\ell(|\bu(t_1)|_{L^2}^2)
               +  \int_{t_1}^t \psi_\ell'(|\bu(s)|_{L^2}^2)(\bbf(s),\bu(s))_{L^2} \;\rd s \right\} 
                 \;\rd\tilde\rho_J(\bu),
  \end{multline}
  for all $t_1\in J'$, all $t\in J$ with $t_1<t$, and all $\ell\in \NN$. Using the
  density of $\{\psi_\ell\}_{\ell\in\NN}$ and renaming $t_1$ as $t'$, we find
  \begin{multline}
    \label{meanenergyineqonJRrhoJ}
    \int_{\Ucal_J^\sharp(R)} \left\{ \frac{1}{2}\psi(|\bu(t)|_{L^2}^2) 
       + \nu\int_{t'}^t \psi'(|\bu(s)|_{L^2}^2)\|\bu(s)\|_{H^1}^2 \;\rd s \right\} \;\rd\tilde\rho_J(\bu)\\
          \leq  \int_{\Ucal_J^\sharp(R)} \left\{ \frac{1}{2}\psi(|\bu(t')|_{L^2}^2)
               +  \int_{t'}^t \psi'(|\bu(s)|_{L^2}^2)(\bbf(s),\bu(s))_{L^2} \;\rd s \right\} 
               \;\rd\tilde\rho_J(\bu),
  \end{multline}
  for all $t'\in J'$, all $t\in J$ with $t_1<t$, and all $\psi$. This proves the strengthened mean 
  energy inequality in $\Ucal_J^\sharp(R)$
   
  Applying now the Cauchy-Schwarz, Poincar\'e, and Young inequalities to
  the forcing term in the mean energy inequality \eqref{meanenergyineqonJRrhoJ} 
  with $\psi(r)=r$, we find that
   \begin{multline*}
    \int_{\Ucal_J^\sharp(R)} \left\{ |\bu(t)|_{L^2}^2 +\nu\int_{t'}^t \|\bu(s)\|_{H^1}^2 \;\rd s 
             \right\} \;\rd\tilde\rho_J(\bu) \\
          \leq  \int_{\Ucal_J^\sharp(R)}  |\bu(t')|_{L^2}^2 \;\rd\tilde\rho_J(\bu)
               +  \frac{1}{\nu\lambda_1}(t-t')\|\bbf\|_{L^\infty(I;H)}^2 \\
          \leq  \int_{\Ucal_J^\sharp}  |\bu(t')|_{L^2}^2 \;\rd\tilde\rho_J(\bu)
               +  \frac{1}{\nu\lambda_1}(t-t')\|\bbf\|_{L^\infty(I;H)}^2,
  \end{multline*}
  for almost all $t'\in J$ and all $t\in J$ with $t\geq t'$.
  
  Using the definition of $\tilde\rho_J$ we can rewrite the inequality 
  above as
  \begin{multline*}
    \frac{1}{\rho_J(\Ucal_J^\sharp(R))}
       \int_{\Ucal_J^\sharp(R)} \left\{ |\bu(t)|_{L^2}^2 +\nu\int_{t'}^t \|\bu(s)\|_{H^1}^2 \;\rd s 
             \right\} \;\rd\rho_J(\bu) \\
          \leq   \frac{1}{\rho_J(\Ucal_J^\sharp(R))}
              \int_{\Ucal_J^\sharp(R)}  |\bu(t')|_{L^2}^2 \;\rd\rho_J(\bu)
               +  \frac{1}{\nu\lambda_1}(t-t')\|\bbf\|_{L^\infty(I;H)}^2 ,
  \end{multline*}

  Applying the Monotone Convergence Theorem to the left
  hand side of the inequality above we find, by taking the limit
  $R\rightarrow \infty$, that
   \begin{multline*} 
     \int_{\Ucal_I^\sharp} \left\{ |\bu(t)|_{L^2}^2 +\nu\int_{t'}^t \|\bu(s)\|_{H^1}^2 \;\rd s 
             \right\} \;\rd\rho_J(\bu) \\
          \leq  \int_{\Ucal_J^\sharp}  |\bu(t')|_{L^2}^2 \;\rd\rho_J(\bu)
               +  \frac{1}{\nu\lambda_1}(t-t')\|\bbf\|_{L^\infty(I;H)}^2,
  \end{multline*}
  for almost all $t'\in J$ and all $t\in J$ with $t\geq t'$.
  
  Since we are assuming that the mean kinetic energy is finite for almost every
  $t'$ in $J$, we consider $t'$ which is ``good'' for the energy
  inequality and for the corresponding finite mean kinetic energy, and such
  set of $t'$ is still of full measure in $I$. Then, we find from above that
  \[ t\rightarrow \int_{\Ucal_J^\sharp} |\bu(t)|_{L^2}^2 \;\rd\rho_J(\bu)
       \in L_\loc^\infty(J^\circ)
  \]
  and is finite everywhere on $J$, and
  \[ t\rightarrow \int_{\Ucal_I^\sharp} \|\bu(t)\|_{H^1}^2 \;\rd\rho(\bu) 
       \in L_\loc^1(J^\circ),
  \]
  where $J^\circ$ is the interior of the interval $J$. Since these hold for any $J\subset I$
  closed and bounded, they imply \eqref{rhoenergyislinftyloc}
  and \eqref{rhoenstrophyisl1loc}.
  
  Now we go back to the mean energy inequality \eqref{meanenergyineqonJRrhoJ}
  on $\Ucal_J^\sharp(kR_0)$ and pass to the limit as $k\rightarrow \infty$ using the 
  the Lebesgue Dominated Convergence Theorem to obtain the mean energy inequality
  on the whole space $\Ucal_J^\sharp$:
  \begin{multline*}
    \int_{\Ucal_J^\sharp} \left\{ \frac{1}{2}\psi(|\bu(t)|_{L^2}^2) 
      + \nu\int_{t'}^t \psi'(|\bu(t)|_{L^2}^2)\|\bu(s)\|_{H^1}^2 \;\rd s \right\} \;\rd\rho_J(\bu) \\
          \leq  \int_{\Ucal_J^\sharp} \left\{ \frac{1}{2}\psi(|\bu(t')|_{L^2}^2)
               +  \int_{t'}^t \psi'(|\bu(t)|_{L^2}^2)(\bbf(s),\bu(s))_{L^2} \;\rd s \right\} 
                 \;\rd\rho_J(\bu),
  \end{multline*}
  for almost all $t'\in J$ and all $t\in J$ with $t\geq t'$. This completes the proof.
\end{proof}
\medskip

\begin{rmk}
  Notice that the idea of the proof of Theorem \ref{propvsenoughinw} is to reduce the
  proof of the strengthened mean energy inequality from $\Ucal_I^\sharp$ to the case
  $\Ucal_J^\sharp(R)$, for $R\geq R_0$ and $J\subset I$ compact, and then, for 
  a Vishik-Fursikov measure carried by the compact Hausdorff space $\Ucal_J^\sharp(R)$, 
  the idea is to use the Krein-Milman Theorem to approximate this measure by 
  a convex combination 
  of Dirac measures concentrated on individual weak solutions for which the 
  strengthened mean energy inequality holds. This approximation of a Vishik-Fursikov
  measure by convex combinations of Dirac measures will be further explored in
  Section \ref{characvfss} to characterize statistical solutions which are 
  projections of Vishik-Fursikov measures. 
\end{rmk}

\begin{prop}
  \label{propvsenoughinw2}
  Let $I\subset \RR$ be an interval closed and bounded on the left 
  with left end point $t_0$. Let $\rho$ be a Borel probability 
  measure on $\Ccal_\loc(I,H_\rw)$ carried by $\Ucal_I^\sharp$. Suppose that
  \begin{equation}
    \int_{\Ucal_I^\sharp} |\bu(t)|_{L^2}^2 \;\rd\rho(\bu) < \infty,
  \end{equation}
  for almost all $t\in I$ and
  \begin{equation}
    \label{continuitywithpsi}
    \lim_{t\rightarrow t_0^+}  \int_{\Ucal_I^\sharp} \psi(|\bu(t)|_{L^2}^2) \;\rd\rho(\bu) 
      =  \int_{\Ucal_I^\sharp} \psi(|\bu(t_0)|_{L^2}^2) \;\rd\rho(\bu) <\infty,
  \end{equation}
  for every function $\psi$ as in Proposition \ref{propvsenoughinw}.
  Then,
  \begin{equation}
    t\mapsto \int_{\Ucal_I^\sharp} |\bu(t)|_{L^2}^2 \;\rd\rho(\bu) \in L_\loc^\infty(I),
  \end{equation}
  and the strengthened mean energy inequality \eqref{statenergyineqforI} holds including 
  at time $t'=t_0$.
\end{prop}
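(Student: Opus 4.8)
The plan is to deduce everything except the endpoint case from Proposition~\ref{propvsenoughinw}, and then to obtain the strengthened mean energy inequality at $t'=t_0$ by a passage to the limit $t'\to t_0^+$.

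First I would note that $\rho$ satisfies the hypotheses of Proposition~\ref{propvsenoughinw}: it is carried by $\Ucal_I^\sharp$ and has finite mean kinetic energy for almost every $t\in I$. Hence \eqref{rhoenergyislinftyloc} holds --- which is precisely the first assertion --- together with \eqref{rhoenstrophyisl1loc}, and the strengthened mean energy inequality \eqref{statenergyineqforI} holds for all $t'$ in some set $I'\subset I$ of full measure and all $t\in I$ with $t\ge t'$. Since $I$ is closed and bounded on the left, $t_0\in I$, so $L^1_{\loc}(I)$ and $L^\infty_{\loc}(I)$ entail integrability and boundedness on compact intervals $[t_0,t]\subset I$. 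It remains to establish \eqref{statenergyineqforI} for $t'=t_0$.

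To that end I would fix an admissible $\psi$ and some $t\in I$ with $t>t_0$, pick a decreasing sequence $t_n'\in I'$ with $t_n'\downarrow t_0$ (possible since $I'$ has full measure in $I$ and $t_0$ is the left endpoint), write \eqref{statenergyineqforI} with $t'=t_n'$, and let $n\to\infty$ term by term. The term $\frac12\int_{\Ucal_I^\sharp}\psi(|\bu(t)|_{L^2}^2)\,\rd\rho(\bu)$ does not depend on $n$. For the enstrophy term on the left, the integrand $(\bu,s)\mapsto \psi'(|\bu(s)|_{L^2}^2)\|\bu(s)\|_{H^1}^2$ is nonnegative (as $\psi'\ge 0$) and Borel, its measurability being of the same kind as handled in the proof of Proposition~\ref{propvsenoughinw} via the truncations $P_m$, and $\mathbf{1}_{[t_n',t]}$ increases to $\mathbf{1}_{(t_0,t]}$; so by Tonelli together with the Monotone Convergence Theorem this term converges, with value a priori in $[0,+\infty]$, to $\nu\int_{\Ucal_I^\sharp}\int_{t_0}^t \psi'(|\bu(s)|_{L^2}^2)\|\bu(s)\|_{H^1}^2\,\rd s\,\rd\rho(\bu)$. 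On the right, $\frac12\int_{\Ucal_I^\sharp}\psi(|\bu(t_n')|_{L^2}^2)\,\rd\rho(\bu)\to\frac12\int_{\Ucal_I^\sharp}\psi(|\bu(t_0)|_{L^2}^2)\,\rd\rho(\bu)$ by the continuity hypothesis \eqref{continuitywithpsi}, while the forcing term converges to its value over $[t_0,t]$ by absolute continuity of the Lebesgue integral: by Cauchy--Schwarz and the Poincar\'e inequality,
\[
  \left|\int_{\Ucal_I^\sharp}\psi'(|\bu(s)|_{L^2}^2)(\bbf(s),\bu(s))_{L^2}\,\rd\rho(\bu)\right|
  \le \Big(\sup_{r\in\RR}\psi'(r)\Big)\,\|\bbf\|_{L^\infty(I;H)}\Big(\int_{\Ucal_I^\sharp}|\bu(s)|_{L^2}^2\,\rd\rho(\bu)\Big)^{1/2},
\]
and the right-hand side lies in $L^\infty_{\loc}(I)\subset L^1_{\loc}(I)$ by \eqref{rhoenergyislinftyloc}. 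Passing to the limit in the inequality then yields \eqref{statenergyineqforI} with $t'=t_0$; since its right-hand side is finite, the enstrophy term on the left is finite as well.

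The only genuinely delicate point is that the enstrophy integral at $t'=t_0$ is not known to be finite in advance, which is why I would invoke the Monotone Convergence Theorem (valid with value $+\infty$) rather than a dominated-convergence argument, recovering finiteness a posteriori from the inequality itself. Everything else is a routine passage to the limit resting on the local integrability and boundedness already supplied by Proposition~\ref{propvsenoughinw}.
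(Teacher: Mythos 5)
Your proposal is correct and follows essentially the same route as the paper: everything except the endpoint case is quoted from Proposition~\ref{propvsenoughinw}, and the case $t'=t_0$ is obtained by letting $t'\to t_0^+$ along admissible times and using the continuity hypothesis \eqref{continuitywithpsi}. The paper dismisses this limit passage as trivial, whereas you supply the (correct) details — monotone convergence for the enstrophy term and absolute continuity for the forcing term — which is a harmless elaboration rather than a different argument.
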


\begin{proof}
  In view of Proposition \ref{propvsenoughinw} we only need to prove that
  the strengthened mean energy inequality holds also at $t'=t_0$.
  But this follows trivially from the corresponding inequality for $t'>t_0$
  and taking the limit as $t'\rightarrow t_0$ using condition \eqref{continuitywithpsi}.
\end{proof}
\medskip

Since any Vishik-Fursikov measure satisfies, by definition, the hypotheses in 
Propositions \ref{propvsenoughinw} and \ref{propvsenoughinw2}, we have the following 
corollary, which we state as a theorem.

\begin{thm}
  \label{thmvfmsatisfiesenergyine}
    Let $\rho$ be a Vishik-Fursikov measure over an interval $I\subset \RR$.
    Then, the strengthened mean energy inequality
    \eqref{statenergyineqforI} holds on $I$.
    If $I$ is closed and bounded on the left, then \eqref{statenergyineqforI} holds in particular 
    for $t'$ being the left end point of $I$.
    \qed
\end{thm}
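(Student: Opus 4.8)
The plan is to deduce the theorem directly from Propositions \ref{propvsenoughinw} and \ref{propvsenoughinw2} by checking that a Vishik-Fursikov measure fulfils all of their hypotheses. First I would observe that condition \eqref{rhocarrierforvfm} of Definition \ref{defvfmeasure} says precisely that $\rho$ is carried by $\Ucal_I^\sharp$, and that condition \eqref{rhomeankineticforvfm} says that $t\mapsto\int_{\Ucal_I^\sharp}|\bu(t)|_{L^2}^2\;\rd\rho(\bu)$ belongs to $L_\loc^\infty(I)$; the latter implies in particular that this integral is finite for almost every $t\in I$. These are exactly the two hypotheses of Proposition \ref{propvsenoughinw}, so that proposition applies and yields \eqref{rhoenergyislinftyloc}, \eqref{rhoenstrophyisl1loc}, and the strengthened mean energy inequality \eqref{statenergyineqforI} on $I$, valid for all $t'$ in some set $I'$ of full measure and all $t\in I$ with $t\geq t'$.

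Next, when $I$ is closed and bounded on the left with left end point $t_0$, condition \eqref{meancontinuityatinitialtimeforvfm} of Definition \ref{defvfmeasure} is word for word the hypothesis \eqref{continuitywithpsi} of Proposition \ref{propvsenoughinw2} (and the finiteness of the mean kinetic energy required by that proposition is again condition \eqref{rhomeankineticforvfm}). Hence Proposition \ref{propvsenoughinw2} applies and extends the validity of \eqref{statenergyineqforI} so as to include the value $t'=t_0$, which is the remaining assertion.

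I do not expect any obstacle here: the entire content of the theorem has already been established in the two preceding propositions, and the only thing left to do is to match the defining conditions of a Vishik-Fursikov measure against their hypotheses. The theorem is recorded separately merely to emphasize, for later use, that \emph{every} Vishik-Fursikov measure — and not only the particular one produced in the existence proof of Theorem \ref{thmexistencevfmeasure} — enjoys the strengthened mean energy inequality.
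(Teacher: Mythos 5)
Your proposal is correct and matches the paper's own treatment exactly: the theorem is stated there as an immediate consequence of Propositions \ref{propvsenoughinw} and \ref{propvsenoughinw2}, whose hypotheses are precisely conditions \eqref{rhocarrierforvfm}, \eqref{rhomeankineticforvfm}, and \eqref{meancontinuityatinitialtimeforvfm} of Definition \ref{defvfmeasure}. Nothing further is needed.
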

\medskip

\begin{rmk}
  Let $I\subset \RR$ be an arbitrary interval. For each $\bu\in \Ccal_\loc(I,H_\rw)$ and $t\in I$,
  define 
  \[ h_+(\bu,t)=\limsup_{\tau\rightarrow t^+} |\bu(\tau)|_{L^2}.
  \]
  It is not difficult to see that the strengthened energy inequality 
  \eqref{strengthenedenergyineq} 
  implies that for any $\bu\in\Ucal_I^\sharp$ and for any $\psi$ as in 
  \eqref{strengthenedenergyineq},
  \begin{multline}
    \frac{1}{2}\psi(h_+^2(\bu,t)) + \nu\int_{t_1}^t \psi'(|\bu(t)|_{L^2}^2)\|\bu(s)\|_{H^1}^2 \;\rd s \\
          \leq  \frac{1}{2}\psi(h_+^2(\bu,t_1)) +  \int_{t_1}^t \psi'(|\bu(t)|_{L^2}^2)
            (\bbf(s),\bu(s))_{L^2} \;\rd s,
  \end{multline}
  for all $t,t_1\in I$ with $t\geq t_1$. Similarly, for a Vishik-Fursikov measure $\rho$ over
  $I$, we find
  \begin{multline}
    \label{meanstrengthenedlimsup}
    \int_{\Ucal_I^\sharp} \left\{\frac{1}{2}\psi(h_+^2(\bu,t)) 
       + \nu\int_{t_1}^t \psi'(|\bu(t)|_{L^2}^2)\|\bu(s)\|_{H^1}^2 \;\rd s \right\}\rd\rho(\bu) \\
          \leq \int_{\Ucal_I^\sharp} \left\{ \frac{1}{2}\psi(h_+^2(\bu,t_1)) 
            +  \int_{t_1}^t \psi'(|\bu(t)|_{L^2}^2)(\bbf(s),\bu(s))_{L^2} \;\rd s \right\}\rd\rho(\bu),
  \end{multline}  
  for all $t,t_1\in J$ with $t\geq t_1$. The advantage of introducing the term $h_+(\bu,t)$ is that, with this term, the inequalities hold everywhere, not just almost everywhere and in a set depending on the solution. We do not use this inequalities here, but it might be useful when considering a collection of weak solutions or statistical solutions.
\end{rmk}

\subsection{Definition and existence of time-dependent Vishik-Fursikov statistical solutions}

A Borel probability measure $\rho$ on $\Ccal_\loc(I,H_\rw)$ induces a family of 
time-dependent Borel probability measures $\{\rho_t\}_{t\in I}$ in the phase space $H$
defined by the projections $\rho_t=\Pi_t\rho,$ so that
\begin{equation}
  \label{2.15a}
  \int_H \varphi(\bu) \;\rd\rho_t(\bu)
    = \int_{\Ccal_\loc(I,H_\rw)} \varphi(\bv(t)) \;\rd\rho(\bv),
    \qquad \forall t\in I,
\end{equation}
for every $\varphi$ which belongs to $L^1(\rho_t)$ for any $t\in I$; in particular for any
$\varphi$ in $\Ccal_\rb(H_\rw)$.

The following proposition gives the first relation between the statistical solutions of the 
NSE in the sense of Definition \ref{deftimedependetstatisticalsolution}
and the Vishik-Fursikov measures in Definition \ref{defvfmeasure}
(see \cite{vishikfursikov88,fmrt2001a}).

\begin{thm}
  \label{defvfsshalfclosedinterval}
  Let $I\subset \RR$ be an interval closed and bounded on the left, with the left
  end point denoted $t_0$. If $\rho$ is a Vishik-Fursikov 
  measure over $I$, then the family of projections $\{\rho_t\}_{t\in I}$ is a statistical 
  solution with initial data $\rho_{t_0}$ (in the sense of Definition 
  \ref{deftimedependetstatisticalsolution}).
\end{thm}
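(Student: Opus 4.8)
The plan is to show that the family $\{\mu_t\}_{t\in I}$ defined by $\mu_t\define\rho_t=\Pi_t\rho$ satisfies conditions \eqref{ssphimeas}--\eqref{ssinitialtime} of Definition \ref{deftimedependetstatisticalsolution}. The recurring tool is the change of variables formula \eqref{changeofvariablesinducedmeasures}--\eqref{2.15a}: since $\rho$ is carried by $\Ucal_I^\sharp$, for any nonnegative (or $\mu_t$-integrable) Borel function $\varphi$ on $H_\rw$ one has $\int_H\varphi(\bu)\,\rd\mu_t(\bu)=\int_{\Ucal_I^\sharp}\varphi(\bu(t))\,\rd\rho(\bu)$; recall also that the Borel sets of $H$ and of $H_\rw$ coincide and that $\bu\mapsto|\bu|_{L^2}^2$ and $\bu\mapsto\|\bu\|_{H^1}^2$ (the latter valued in $[0,\infty]$) are lower semicontinuous, hence Borel, on $H_\rw$. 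With this, conditions \eqref{ssenergy} and \eqref{ssenstrophy} are immediate: the two functions in question equal $t\mapsto\int_{\Ucal_I^\sharp}|\bu(t)|_{L^2}^2\,\rd\rho(\bu)$ and $t\mapsto\int_{\Ucal_I^\sharp}\|\bu(t)\|_{H^1}^2\,\rd\rho(\bu)$, which belong to $L^\infty_\loc(I)$ and $L^1_\loc(I)$ respectively by Proposition \ref{propvsenoughinw} (see \eqref{rhoenergyislinftyloc}, \eqref{rhoenstrophyisl1loc}), applicable because a Vishik-Fursikov measure satisfies its hypotheses. The strengthened mean energy inequality \eqref{strengthenedmeanenergyineq} of \eqref{ssstmeaneneineq} follows from \eqref{statenergyineqforI} by the change of variables formula together with the Fubini Theorem, legitimate since $\psi'$ is bounded and the relevant space-time $\rho$-integrals are finite by \eqref{rhoenstrophyisl1loc} and the energy bounds; Theorem \ref{thmvfmsatisfiesenergyine} (via Proposition \ref{propvsenoughinw2} and condition \eqref{meancontinuityatinitialtimeforvfm} for $\rho$) gives its validity also at $t'=t_0$. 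Finally, condition \eqref{ssinitialtime} is just \eqref{meancontinuityatinitialtimeforvfm} rewritten: $\int_H\psi(|\bu|_{L^2}^2)\,\rd\mu_t(\bu)=\int_{\Ucal_I^\sharp}\psi(|\bu(t)|_{L^2}^2)\,\rd\rho(\bu)\to\int_{\Ucal_I^\sharp}\psi(|\bu(t_0)|_{L^2}^2)\,\rd\rho(\bu)=\int_H\psi(|\bu|_{L^2}^2)\,\rd\mu_{t_0}(\bu)$ as $t\to t_0^+$.

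For the measurability condition \eqref{ssphimeas}, I would note that the evaluation map $(\bv,t)\mapsto\bv(t)$ is continuous from $\Ccal_\loc(I,H_\rw)\times I$ into $H_\rw$ (combining uniform weak convergence on compact subintervals with weak continuity of each trajectory), hence Borel into $H$; composing with a bounded Borel $\Phi:H\to\RR$, the function $(\bv,t)\mapsto\Phi(\bv(t))$ is bounded and Borel, so by Fubini $t\mapsto\int\Phi(\bv(t))\,\rd\rho(\bv)=\int_H\Phi(\bu)\,\rd\mu_t(\bu)$ is measurable. Any scruple about the product $\sigma$-algebra on the non-metrizable space $\Ccal_\loc(I,H_\rw)$ is removed by restricting $\rho$ to the compact metric sets $\Ucal_I^\sharp(R_k)$, which, by Proposition \ref{ucalisharpsigmacompact}, cover $\Ucal_I^\sharp$, and applying the argument there.

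The only substantial point is the Liouville equation \eqref{liouvilleeq}, which I would treat in two stages. First, for $\rho$-almost every $\bu$ --- which lies in $\Ucal_I^\sharp$, hence is a weak solution on $I^\circ$ --- the footnote to property \eqref{lhfnse} shows that $t\mapsto(\bu(t),\bv_j)_{L^2}$ is locally absolutely continuous on $I^\circ$ with derivative $\dual{\bF(\bu(t)),\bv_j}_{V',V}$; since a cylindrical $\Phi$ (Definition \ref{deftestfunction}) is $C^1$ with compact support and each trajectory is locally bounded in $H$, the chain rule gives that $t\mapsto\Phi(\bu(t))$ is locally absolutely continuous on $I^\circ$ with $\frac{\rd}{\rd t}\Phi(\bu(t))=\dual{\bF(\bu(t)),\Phi'(\bu(t))}_{V',V}$, whence $\Phi(\bu(t))-\Phi(\bu(t'))=\int_{t'}^t\dual{\bF(\bu(s)),\Phi'(\bu(s))}_{V',V}\,\rd s$ for all $t,t'\in I^\circ$. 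Integrating in $\rho$ and applying Fubini --- justified since $|\dual{\bF(\bu(s)),\Phi'(\bu(s))}_{V',V}|\leq C_\Phi(1+\|\bu(s)\|_{H^1}^2)$ on bounded subsets of $\Ucal_I^\sharp$, the $\rho$-mean of $\int_{t'}^t\|\bu(s)\|_{H^1}^2\,\rd s$ being finite by \eqref{rhoenstrophyisl1loc} and the joint measurability of $(\bu,s)\mapsto\dual{\bF(\bu(s)),\Phi'(\bu(s))}_{V',V}$ being checked on each $\Ucal_I^\sharp(R)$ as in the proof of Proposition \ref{propvsenoughinw} --- yields \eqref{liouvilleeq} for all $t,t'\in I^\circ$. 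To reach $t'=t_0$ I would use that $t\mapsto\int_H\Phi(\bu)\,\rd\mu_t(\bu)$ is continuous at $t_0$ from the right: for each $\bu\in\Ucal_I^\sharp$, $(\bu(t),\bv_j)_{L^2}\to(\bu(t_0),\bv_j)_{L^2}$ as $t\to t_0^+$ by weak continuity at $t_0$, so $\Phi(\bu(t))\to\Phi(\bu(t_0))$ pointwise and boundedly, and dominated convergence applies; since $s\mapsto\int_H\dual{\bF(\bu),\Phi'(\bu)}_{V',V}\,\rd\mu_s(\bu)$ lies in $L^1_\loc(I)$ near $t_0$ by \eqref{rhoenstrophyisl1loc}, letting $t'\to t_0^+$ in the identity already proved for $t'>t_0$ extends \eqref{liouvilleeq} to $t'=t_0$.

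I expect the Liouville step to be the main obstacle, the delicate sub-points being the Fubini justification (measurability in $(\bu,s)$ and the integrable majorant for $\dual{\bF(\bu(s)),\Phi'(\bu(s))}_{V',V}$) and, especially, the passage to the initial time $t_0$, which cannot be done trajectory-by-trajectory --- elements of $\Ucal_I^\sharp$ need not be strongly continuous at $t_0$ --- but only through the $\rho$-averaged quantity, exploiting the continuity just described.
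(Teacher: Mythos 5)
Your proposal is correct and follows essentially the same route as the paper's proof: projection via the change of variables formula, Propositions \ref{propvsenoughinw}--\ref{propvsenoughinw2} for conditions \eqref{ssenergy}, \eqref{ssenstrophy}, \eqref{ssstmeaneneineq}, condition \eqref{meancontinuityatinitialtimeforvfm} of Definition \ref{defvfmeasure} for \eqref{ssinitialtime}, and the trajectory-wise absolute continuity of $t\mapsto\Phi(\bu(t))$ followed by Fubini for the Liouville equation (the paper establishes your measurability condition \eqref{ssphimeas} slightly differently, as a pointwise limit of the continuous functions $t\mapsto\int\Phi(P_m\bu(t))\,\rd\rho(\bu)$, but both arguments work). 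The one inaccuracy is your closing remark that the passage to $t'=t_0$ in \eqref{liouvilleeq} cannot be done trajectory-by-trajectory: a cylindrical $\Phi$ depends on $\bu(t)$ only through the pairings $(\bu(t),\bv_j)$ with $\bv_j\in V$, so the weak continuity of each $\bu\in\Ucal_I^\sharp$ at $t_0$, together with $\bu\in L^2(t_0,t;V)$ (which follows from the energy inequality and boundedness of $\bu$ near $t_0$), already yields $\Phi(\bu(t))=\Phi(\bu(t_0))+\int_{t_0}^t\dual{\bF(\bu(s)),\Phi'(\bu(s))}_{V',V}\,\rd s$ for every individual trajectory --- this is exactly how the paper treats the initial time, and your $\rho$-averaged limit argument is a valid but unnecessary detour.
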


\begin{proof} 
  We need to check the conditions in the 
  Definition \ref{deftimedependetstatisticalsolution} of statistical solution.
  Conditions \eqref{ssenergy}, \eqref{ssenstrophy}, \eqref{ssstmeaneneineq}, and 
  \eqref{ssinitialtime} follow immediately from the definition 
  of $\{\rho_t\}_{t\in I}$ as the projections of $\rho$ and the properties of 
  $\rho$ given in Proposition \ref{propvsenoughinw2} and in the Definition 
  \ref{defvfmeasure} of Vishik-Fursikov. The only remark worth mentioning
  is that the projection \eqref{2.15a} from $\rho$ to $\rho_t$ is initially valid for 
  $\varphi\in \Ccal_\rb(H_\rw)$, but this can be easily extended to 
  $\varphi(\bu)=|\bu|_{L^2}^2$, and $\varphi(\bu)=\|\bu\|_{H^1}^2$, and so on, by
  using \eqref{integrabilityinducedmeasures}.
    
  Condition \eqref{ssphimeas} follows from the fact that
   \[  t\mapsto \int_H \Phi(\bu) \;\rd\rho_t(\bu) = \int_{\Ccal_\loc(I;H_\rw)} \Phi(\bu(t))\;\rd\rho(\bu)
   \]
   is the pointwise (in time) limit (thanks to the Lebesgue Dominate Convergence Theorem)
   of the continuous function
   \[ t\mapsto \int_{\Ccal_\loc(I;H_\rw)} \Phi(P_m\bu(t))\;\rd\rho(\bu),
   \]
   as $m\rightarrow \infty$, where the $P_m$ are the Galerkin projectors.
   
   It remains to prove condition \eqref{ssliouville}, which is the Liouville-type equation.
   Let $\Phi(\bu)=\psi((\bu,\bv_1),\ldots,(\bu,\bv_k))$ be a cylindrical test function
   as in Definition \ref{deftestfunction}. For $\bu\in \Ucal_I^\sharp$,
   we have $t\mapsto (\bu(t),\bv_j)$ absolutely continuous on $I$ with
   \[ \frac{\rd}{\rd t} (\bu(t),\bv_j) = \dual{\bF(\bu(t)),\bv_j}_{V',V}\in L_\loc^{4/3}(I),
   \]
   where $\bF(\bu) = \bbf - \nu A\bu - B(\bu,\bu)$. Thus, since $\phi$ is continuously
   differentiable, $t\mapsto \Phi(\bu(t))$ is also absolutely continuous on $I$, with
   \begin{align*}
     \frac{\rd}{\rd t} \Phi(\bu(t)) 
       & = \sum_{j=1}^k \partial_j \phi((\bu,\bv_1),\ldots,(\bu,\bv_k)) \frac{\rd}{\rd t}(\bu(t),\bv_j) \\
       & = \sum_{j=1}^k \partial_j \phi((\bu,\bv_1),\ldots,(\bu,\bv_k)) 
         \dual{\bF(\bu(t)),\bv_j}_{V',V} \\
       & = \dual{\bF(\bu(t)),\Phi'(\bu(t))}_{V',V} \in L_\loc^{4/3}(I).
   \end{align*}
   Hence,
   \[ \Phi(\bu(t)) = \Phi(\bu(t')) + \int_{t'}^t \dual{\bF(\bu(s)),\Phi'(\bu(s))}_{V',V} \;\rd s,
   \]
   for all $t',t\in I$. Since $\rho$ is carried by $\Ucal_I^\sharp$, the previous 
   relation holds $\rho$-almost everywhere, so that, upon integration,
   \begin{multline*}
     \int_{\Ccal_\loc(I,H_\rw)}\Phi(\bu(t)) \;\rd\rho(\bu) \\
        = \int_{\Ccal_\loc(I,H_\rw)}\Phi(\bu(t')) \;\rd\rho(\bu)
          + \int_{\Ccal_\loc(I,H_\rw)}\int_{t'}^t \dual{\bF(\bu(s)),\Phi'(\bu(s))}_{V',V} \;\rd s\rd\rho(\bu),
   \end{multline*}
   for all $t',t\in I$. The second integrand in the right hand side belongs to $L^{4/3}$ 
   (in $t$ and $\bu$ for the Lebesgue and $\rho$ measures, respectively) so we
   apply the Fubini Theorem to write
   \begin{multline*}
     \int_{\Ccal_\loc(I,H_\rw)}\Phi(\bu(t)) \;\rd\rho(\bu) \\
        = \int_{\Ccal_\loc(I,H_\rw)}\Phi(\bu(t')) \;\rd\rho(\bu)
          + \int_{t'}^t\int_{\Ccal_\loc(I,H_\rw)} \dual{\bF(\bu(s)),\Phi'(\bu(s))}_{V',V} \;\rd\rho(\bu)\rd s.
   \end{multline*}
   Since $\bu\mapsto \Phi(\bu(t))$ is continuous on $\Ccal_\loc(I,H_\rw)$, we have
   \[ \int_{\Ccal_\loc(I,H_\rw)}\Phi(\bu(t)) \;\rd\rho(\bu) = \int_H \Phi(\bu)\;\rd\rho_t(\bu).
   \]
   Since the map $\bu\mapsto \dual{\bF(P_m\bu(s)),\Phi'(\bu(s))}_{V',V}$ is continuous on 
   $\Ccal_\loc(I,H_\rw)$,
   and using the Lebesgue Dominated Convergence Theorem, we find also that
   \begin{align*}
     \int_{\Ccal_\loc(I,H_\rw)} & \dual{\bF(\bu(s)),\Phi'(\bu(s))}_{V',V} \;\rd\rho(\bu) \\
       & = \lim_{m\rightarrow\infty}\int_{\Ccal_\loc(I,H_\rw)} 
         \dual{\bF(P_m\bu(s)),\Phi'(\bu(s))}_{V',V} \;\rd\rho(\bu) \\
       & = \lim_{m\rightarrow\infty}\int_H \dual{\bF(P_m\bu),\Phi'(\bu)}_{V',V} \;\rd\rho_s(\bu) \\
       & = \int_H \dual{\bF(\bu),\Phi'(\bu)}_{V',V} \;\rd\rho_s(\bu).
   \end{align*}
   Thus,
   \[ \int_H\Phi(\bu) \;\rd\rho_t(\bu) = \int_H\Phi(\bu) \;\rd\rho_{t'}(\bu)
          + \int_{t'}^t\int_H \dual{\bF(\bu),\Phi'(\bu)}_{V',V} \;\rd\rho_s(\bu)\rd s,
   \]
   for all $t',t\in I$. This completes the proof. 
\end{proof}
\medskip

There is also the corresponding result for an interval open on the left.

\begin{thm}
  \label{defvfssopeninterval}
  Let $I\subset \RR$ be an interval open on the left.
  If $\rho$ is a Vishik-Fursikov measure over the
  interval $I$, then $\{\rho_t\}_{t\in I}$ is a statistical
  solution on $I$ (in the sense of Definition 
  \ref{deftimedependetstatisticalsolution}).
\end{thm}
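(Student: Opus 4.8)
The plan is to run the argument in parallel with the proof of Theorem \ref{defvfsshalfclosedinterval}, exploiting two simplifications available because $I$ is open on the left: first, $I$ is not closed and bounded on the left, so condition \eqref{ssinitialtime} of Definition \ref{deftimedependetstatisticalsolution} is vacuous; second, $\Ucal_I=\Ucal_I^\sharp$, so that for $\rho$-almost every $\bu$ the trajectory $\bu$ is a Leray-Hopf weak solution on the \emph{whole} interval $I$. Throughout I would use, for the projected measures $\rho_t=\Pi_t\rho$, the change-of-variables identity \eqref{2.15a}, namely $\int_H\varphi(\bu)\,\rd\rho_t(\bu)=\int_{\Ccal_\loc(I,H_\rw)}\varphi(\bv(t))\,\rd\rho(\bv)$, which by \eqref{changeofvariablesinducedmeasures} and \eqref{integrabilityinducedmeasures} is valid not only for $\varphi\in\Ccal_\rb(H_\rw)$ but also, once the relevant moments are seen to be finite, for $\varphi(\bu)=|\bu|_{L^2}^2$, $\varphi(\bu)=\|\bu\|_{H^1}^2$, and $\varphi(\bu)=\psi(|\bu|_{L^2}^2)$ with $\psi$ as in \eqref{ssstmeaneneineq}.

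The analytic core of the verification is already available. Since $\rho$ is a Vishik-Fursikov measure over $I$, it is carried by $\Ucal_I^\sharp$ and satisfies $t\mapsto\int_{\Ucal_I^\sharp}|\bu(t)|_{L^2}^2\,\rd\rho(\bu)\in L_\loc^\infty(I)$, hence the mean kinetic energy is finite for almost every $t\in I$; thus Proposition \ref{propvsenoughinw}, which is stated there for an arbitrary interval, applies and yields directly \eqref{rhoenergyislinftyloc}, \eqref{rhoenstrophyisl1loc} and the strengthened mean energy inequality \eqref{statenergyineqforI} on $I$. Rewriting these three conclusions through \eqref{2.15a} (legitimate, for the unbounded integrands involved, by \eqref{integrabilityinducedmeasures} together with the finiteness of the moments just obtained, and by the Fubini Theorem for the time-integral terms) gives precisely conditions \eqref{ssenergy}, \eqref{ssenstrophy} and \eqref{ssstmeaneneineq} of Definition \ref{deftimedependetstatisticalsolution}. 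So only the measurability condition \eqref{ssphimeas} and the Liouville-type equation \eqref{ssliouville} remain.

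For \eqref{ssphimeas}, I would argue verbatim as in Theorem \ref{defvfsshalfclosedinterval}: for bounded continuous $\Phi$ on $H$, the function $t\mapsto\int_H\Phi(\bu)\,\rd\rho_t(\bu)=\int_{\Ccal_\loc(I,H_\rw)}\Phi(\bu(t))\,\rd\rho(\bu)$ is the pointwise-in-$t$ limit, as $m\to\infty$, of the functions $t\mapsto\int_{\Ccal_\loc(I,H_\rw)}\Phi(P_m\bu(t))\,\rd\rho(\bu)$, which are continuous in $t$ because $\bu(t)$ is weakly continuous in $H$ (so $P_m\bu(t)$ moves continuously in the finite-dimensional space $P_mH$, on which $\Phi$ is continuous and bounded), the convergence being supplied by $P_m\bu(t)\to\bu(t)$ in $H$ and the Lebesgue Dominated Convergence Theorem; a pointwise limit of continuous functions is Borel. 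For \eqref{ssliouville}, I would reproduce the computation in the proof of Theorem \ref{defvfsshalfclosedinterval}: for a cylindrical test function $\Phi$ (Definition \ref{deftestfunction}) and $\rho$-almost every $\bu$, which, since $\Ucal_I=\Ucal_I^\sharp$, is a weak solution on all of $I$, the map $t\mapsto\Phi(\bu(t))$ is locally absolutely continuous on $I$ with $\frac{\rd}{\rd t}\Phi(\bu(t))=\dual{\bF(\bu(t)),\Phi'(\bu(t))}_{V',V}\in L_\loc^{4/3}(I)$, where $\bF(\bu)=\bbf-\nu A\bu-B(\bu,\bu)$; integrating in $\bu$ against $\rho$, applying the Fubini Theorem, using the continuity on $\Ccal_\loc(I,H_\rw)$ of $\bu\mapsto\Phi(\bu(t))$ and of $\bu\mapsto\dual{\bF(P_m\bu(s)),\Phi'(\bu(s))}_{V',V}$, passing the Galerkin projector to the limit by dominated convergence, and identifying the resulting integrals over $\Ccal_\loc(I,H_\rw)$ with integrals over $H$ against $\rho_t,\rho_{t'},\rho_s$ via \eqref{2.15a}, yields \eqref{liouvilleeq} for all $t',t\in I$. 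With \eqref{ssinitialtime} vacuous, this finishes the proof.

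I do not expect a deep obstacle here: the new phenomenon relative to Theorem \ref{defvfsshalfclosedinterval} is that $I$ has no initial time in it (and its left endpoint may be $-\infty$), so that no continuity-at-the-initial-time hypothesis is available to control the integrands near the left edge of $I$ --- but this is exactly what Proposition \ref{propvsenoughinw} already handles, so the real work has been done. The only points that still require a little care are the applications of the Fubini Theorem and of the change-of-variables formula \eqref{2.15a} to the unbounded integrands $|\bu|_{L^2}^2$, $\|\bu\|_{H^1}^2$, $\psi(|\bu|_{L^2}^2)$ and $\dual{\bF(\bu),\Phi'(\bu)}_{V',V}$; these reduce, via \eqref{integrabilityinducedmeasures}, to the local-in-time integrability of $|\bu(t)|_{L^2}^2$ and $\|\bu(t)\|_{H^1}^2$ against $\rho$ provided by \eqref{rhoenergyislinftyloc}--\eqref{rhoenstrophyisl1loc}, and to the local $L^{4/3}$ control, jointly in $(s,\bu)$, of $\dual{\bF(\bu(s)),\Phi'(\bu(s))}_{V',V}$ on compact subintervals, which is what the Fubini step in the proof of \eqref{ssliouville} needs.
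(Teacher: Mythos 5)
Your proposal is correct and takes essentially the same route as the paper, which disposes of this theorem in one line by referring back to the proof of Theorem \ref{defvfsshalfclosedinterval} and noting that condition \eqref{ssinitialtime} need not be checked. Your two observations --- that $\Ucal_I=\Ucal_I^\sharp$ for $I$ open on the left and that Proposition \ref{propvsenoughinw} (rather than Proposition \ref{propvsenoughinw2}) supplies conditions \eqref{ssenergy}, \eqref{ssenstrophy}, \eqref{ssstmeaneneineq} --- are exactly the adjustments the paper's reference implies, and the remaining steps for \eqref{ssphimeas} and \eqref{ssliouville} carry over verbatim since they nowhere use the left endpoint.
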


\begin{proof}
 Similarly to the proof of Theorem \ref{defvfsshalfclosedinterval}, except that we need not
 check condition \eqref{ssinitialtime} of Definition \ref{deftimedependetstatisticalsolution}.
\end{proof}
\medskip

A statistical solution in the sense of Definition \ref{deftimedependetstatisticalsolution}
obtained from a Vishik-Fursikov measure is called a Vishik-Fursikov statistical
solution. More precisely:

\begin{defs}
  \label{defvfss}
  Let $I\subset \RR$ be an arbitrary interval. 
  A Vishik-Fursikov statistical solution of the Navier-Stokes equations
  over $I$ is a statistical solution 
  $\{\rho_t\}_{t\in I}$ such that $\rho_t=\Pi_t\rho$, for all $t\in I$,
  for some Vishik-Fursikov measure $\rho$ over the interval $I$.
\end{defs}

The following existence result follows immediately from Theorems
\ref{thmexistencevfmeasure} and \ref{defvfsshalfclosedinterval}.

\begin{thm}
  \label{thmexistencevfss}
  Let $t_0\in \RR$ and let $\mu_0$ be a Borel probability 
  measure on $H$ satisfying
  \[  \int_H |\bu|_{L^2}^2\;\rd\mu_0(\bu) < \infty.
  \]
  Then, there exists a Vishik-Fursikov statistical solution
  $\{\rho_t\}_{t\geq t_0}$ over the interval $I=[t_0,\infty)$ 
  satisfying $\rho_{t_0}=\mu_0$.
  \qed
\end{thm}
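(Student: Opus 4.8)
The plan is to assemble Theorem~\ref{thmexistencevfss} directly from the two building blocks already established: the existence of a Vishik-Fursikov measure (Theorem~\ref{thmexistencevfmeasure}) and the fact that the time-projections of such a measure form a statistical solution (Theorem~\ref{defvfsshalfclosedinterval}). Concretely, given $t_0\in\RR$ and a Borel probability measure $\mu_0$ on $H$ with finite mean kinetic energy $\int_H|\bu|_{L^2}^2\,\rd\mu_0(\bu)<\infty$, I would first invoke Theorem~\ref{thmexistencevfmeasure} to obtain a Vishik-Fursikov measure $\rho$ over the interval $I=[t_0,\infty)$ with $\Pi_{t_0}\rho=\mu_0$. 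Then I would set $\rho_t=\Pi_t\rho$ for each $t\geq t_0$ and apply Theorem~\ref{defvfsshalfclosedinterval}, which says precisely that $\{\rho_t\}_{t\in I}$ is a statistical solution in the sense of Definition~\ref{deftimedependetstatisticalsolution}, with initial datum $\rho_{t_0}$.

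The only point requiring a small remark is the identification of the initial datum: since $\Pi_{t_0}\rho=\mu_0$ by construction, we have $\rho_{t_0}=\mu_0$, which is exactly the claimed property. By Definition~\ref{defvfss}, the family $\{\rho_t\}_{t\geq t_0}$ is then a Vishik-Fursikov statistical solution, because it is the family of time-projections of the Vishik-Fursikov measure $\rho$. This gives the statement in full.

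There is essentially no obstacle here: the theorem is stated in the excerpt as following ``immediately'' from the two cited results, and indeed the proof is a one-line composition. If anything, the only thing to be careful about is making sure the hypotheses of Theorem~\ref{thmexistencevfmeasure} (finite mean kinetic energy of $\mu_0$) and the interval shape required by Theorem~\ref{defvfsshalfclosedinterval} (closed and bounded on the left, which $[t_0,\infty)$ is) are both met — and they are, verbatim, by the hypotheses of the present theorem. So the proof reduces to: apply Theorem~\ref{thmexistencevfmeasure}, then apply Theorem~\ref{defvfsshalfclosedinterval}, and note $\rho_{t_0}=\Pi_{t_0}\rho=\mu_0$, invoking Definition~\ref{defvfss} to conclude.
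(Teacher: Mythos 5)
Your proposal is correct and is exactly the argument the paper intends: the theorem is stated with a \qed precisely because it follows immediately by combining Theorem \ref{thmexistencevfmeasure} (existence of a Vishik-Fursikov measure $\rho$ with $\Pi_{t_0}\rho=\mu_0$) with Theorem \ref{defvfsshalfclosedinterval} (its projections form a statistical solution), together with Definition \ref{defvfss}. Nothing is missing.
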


\section{Further properties of the Vishik-Fursikov statistical solutions}

\subsection{On the carrier of Vishik-Fursikov measures}

In this section we are interested in the case in which the interval
$I$ is closed and bounded on the left. The case in which $I$ is
open on the left is void of interested for what we are about to discuss since in this case
the spaces $\Ucal_I^\sharp$ and $\Ucal_I$ are known to be equal. 

We denote the left end point of $I$ by $t_0$.
We have defined the Vishik-Fursikov measures 
as measures carried by $\Ucal_I^\sharp$ and satisfying in particular
a certain strengthened continuity at the initial time $t_0$ 
of the mean kinetic energy (condition \eqref{meancontinuityatinitialtimeforvfm} in 
Definition \ref{defvfmeasure}), although the
corresponding energy inequality for the individual solutions
in $\Ucal_I^\sharp$ may not be valid at $t'=t_0$. The aim of this
section is to prove that a Vishik-Fursikov measure is in
fact carried by $\Ucal_I$, hence it is carried by the
individual solutions for which the kinetic energy is continuous at the origin.
Notice that since $H$ is a Hilbert space and since the kinetic energy is
essentially the square of the norm in $H$, then the continuity of the kinetic
energy for an individual solution is equivalent to strong continuity in $H$ (bearing in mind that weak solutions in $\Ucal_I^\sharp$ are weakly continuous also at $t=t_0$).
So in essence the result we are about to prove says that the continuity of the
kinetic energy at time $t=t_0$ in the mean implies the strong continuity of the 
individual solutions at times $t=t_0$ almost everywhere.

\begin{thm}
  \label{vfsscarriedbyus}
  Let $I\subset \RR$ be an arbitrary interval. 
  Let $\rho$ be a Vishik-Fursikov measure over $I$. 
  Then $\rho$ is carried by $\Ucal_I$.
\end{thm}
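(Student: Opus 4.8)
The plan is to reduce the statement to a single measure-theoretic computation built on the auxiliary function $\Psi$ of Lemma \ref{lemphicondition}. If $I$ is open on the left then $\Ucal_I=\Ucal_I^\sharp$, so there is nothing to prove; assume therefore that $I$ is closed and bounded on the left, with left end point $t_0$. Recall from Lemma \ref{lemphicondition} that
\[
  \Psi(\bu,t) = \frac{1}{t-t_0}\int_{t_0}^t\left(|\bu(s)|_{L^2}^2 - |\bu(t_0)|_{L^2}^2\right)\;\rd s
\]
is a Borel function of $(\bu,t)$, that $\bu\mapsto\liminf_{t\to t_0^+}\Psi(\bu,t)$ is a nonnegative Borel function on $\Ccal_\loc(I,H_\rw)$ satisfying \eqref{liminfpsigeqzero}, and that for $\bu\in\Ucal_I^\sharp$ one has $\bu\in\Ucal_I$ precisely when $\liminf_{t\to t_0^+}\Psi(\bu,t)=0$. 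Since $\rho$ is carried by $\Ucal_I^\sharp$ and a nonnegative Borel function with vanishing $\rho$-integral is $\rho$-almost everywhere zero, it suffices to show that
\[
  \int_{\Ucal_I^\sharp}\liminf_{t\to t_0^+}\Psi(\bu,t)\;\rd\rho(\bu) = 0.
\]

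To this end, set $g(t)=\int_{\Ucal_I^\sharp}|\bu(t)|_{L^2}^2\;\rd\rho(\bu)$. By conditions \eqref{rhomeankineticforvfm} and \eqref{meancontinuityatinitialtimeforvfm} of Definition \ref{defvfmeasure} — the latter applied with $\psi(r)=r$ — the function $g$ lies in $L_\loc^\infty(I)$ and is right-continuous at $t_0$ with $g(t_0)<\infty$, so that $\frac{1}{t-t_0}\int_{t_0}^t g(s)\,\rd s\to g(t_0)$ as $t\to t_0^+$. Using that $(\bu,s)\mapsto|\bu(s)|_{L^2}^2$ is jointly Borel (as already used in the proof of Theorem \ref{thmexistencevfmeasure}) and that it is integrable on $\Ucal_I^\sharp\times[t_0,t]$ with respect to $\rho$ and Lebesgue measure in $s$, Fubini's theorem gives
\[
  \int_{\Ucal_I^\sharp}\Psi(\bu,t)\;\rd\rho(\bu) = \frac{1}{t-t_0}\int_{t_0}^t\bigl(g(s)-g(t_0)\bigr)\;\rd s,
\]
and the right-hand side tends to $0$ as $t\to t_0^+$. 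Now fix any sequence $t_n\to t_0^+$ in $I$ with $t_n>t_0$. Since $|\bu(s)|_{L^2}^2\geq 0$, one has the pointwise bound $\Psi(\bu,t_n)\geq -|\bu(t_0)|_{L^2}^2$, and $\int_{\Ucal_I^\sharp}|\bu(t_0)|_{L^2}^2\,\rd\rho = g(t_0)<\infty$ provides an integrable lower bound, so Fatou's lemma applied to $\Psi(\bu,t_n)+|\bu(t_0)|_{L^2}^2\geq 0$ yields
\[
  \int_{\Ucal_I^\sharp}\liminf_{n\to\infty}\Psi(\bu,t_n)\;\rd\rho(\bu)
    \leq \liminf_{n\to\infty}\int_{\Ucal_I^\sharp}\Psi(\bu,t_n)\;\rd\rho(\bu) = 0.
\]
Since $0\leq\liminf_{t\to t_0^+}\Psi(\bu,t)\leq\liminf_{n\to\infty}\Psi(\bu,t_n)$ for every $\bu$ (the first inequality is \eqref{liminfpsigeqzero}, the second holds for any sequence $t_n\to t_0^+$), integrating over $\Ucal_I^\sharp$ forces $\int_{\Ucal_I^\sharp}\liminf_{t\to t_0^+}\Psi(\bu,t)\,\rd\rho=0$, as needed. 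Hence $\liminf_{t\to t_0^+}\Psi(\bu,t)=0$ for $\rho$-almost every $\bu$, and since $\rho$ is carried by $\Ucal_I^\sharp$, Lemma \ref{lemphicondition} gives $\bu\in\Ucal_I$ for $\rho$-almost every $\bu$; that is, $\rho$ is carried by $\Ucal_I$.

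The conceptual heart of the argument — and the step to watch — is the use of the continuity at $t_0$ of the \emph{mean} kinetic energy, condition \eqref{meancontinuityatinitialtimeforvfm} of Definition \ref{defvfmeasure}: this is precisely what makes the Fubini and Fatou estimates above close, upgrading the weak continuity of almost every individual trajectory in $\Ucal_I^\sharp$ to strong continuity from the right at $t_0$. Without it one has no energy inequality at the left end point for the individual solutions and no control on $\int\Psi(\cdot,t)\,\rd\rho$, and the argument collapses. The remaining ingredients — joint Borel measurability of $(\bu,s)\mapsto|\bu(s)|_{L^2}^2$, local integrability of $g$ near $t_0$ so that its time-average converges to $g(t_0)$, and the elementary domination of the $\liminf$ along the full right limit by the $\liminf$ along any sequence approaching $t_0$ from the right — are routine.
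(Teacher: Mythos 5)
Your proof is correct. It shares the paper's overall skeleton — reduce to the left-closed case, use the function $\Psi$ and the characterization $\bu\in\Ucal_I\Leftrightarrow\liminf_{t\to t_0^+}\Psi(\bu,t)=0$ from Lemma \ref{lemphicondition}, show that $\int_{\Ucal_I^\sharp}\Psi(\bu,t)\,\rd\rho(\bu)$ tends to $0$ (or at least has nonpositive $\limsup$) as $t\to t_0^+$, and close with Fatou's lemma against the integrable minorant $-|\bu(t_0)|_{L^2}^2$ — but the central estimate is obtained by a genuinely different and more economical route. The paper controls $\int_{\Ucal_I^\sharp}\Psi(\bu,t)\,\rd\rho(\bu)$ by invoking the strengthened mean energy inequality at $t'=t_0$ (Theorem \ref{thmvfmsatisfiesenergyine}, itself proved via Krein--Milman approximation in Proposition \ref{propvsenoughinw}), then applying Cauchy--Schwarz, Poincar\'e and Young and averaging in time, which yields only the one-sided bound $\int\Psi(\bu,t)\,\rd\rho\leq \tfrac{1}{2\nu\lambda_1}(t-t_0)\|\bbf\|_{L^\infty(I;H)}^2$. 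You instead compute this integral exactly by Fubini as the time average of $g(s)-g(t_0)$ with $g(s)=\int|\bu(s)|_{L^2}^2\,\rd\rho$, and conclude from conditions \eqref{rhomeankineticforvfm} and \eqref{meancontinuityatinitialtimeforvfm} of Definition \ref{defvfmeasure} (the latter with $\psi(r)=r$, which is admissible) that it converges to $0$. This uses only the defining properties of a Vishik--Fursikov measure and bypasses the mean energy inequality entirely, which is a real simplification for this particular theorem; what it buys the paper to go the other way is nothing beyond uniformity of exposition, since the energy inequality is needed elsewhere anyway. The measurability, integrability and Fatou details you supply (joint Borel measurability of $(\bu,s)\mapsto|\bu(s)|_{L^2}^2$, $g\in L^\infty_\loc(I)$ on $[t_0,t]$, $g(t_0)<\infty$, and the comparison of the full right-hand $\liminf$ with the $\liminf$ along a sequence) are all in order.
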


\begin{proof} 
As mentioned above, we only need to consider the case in
which $I$ is closed and bounded on the left, since otherwise both
spaces $\Ucal_I^\sharp$ and $\Ucal_I$ are known to be equal.
Consider then such an interval $I$ and let $t_0$ be the left end
point of $I$.

Consider the function
\[ \Psi(\bu,t) = \frac{1}{t-t_0} \int_{t_0}^t (|\bu(s)|_{L^2}^2 -|\bu(t_0)|_{L^2}^2 )\;\rd s,
      \quad \text{for all } t\in I, \; t>t_0,
\]
which was defined in Lemma \ref{lemphicondition} and proved to
be a Borel function on $\Ccal_\loc(I,H_\rw)\times (I\setminus\{t_0\})$. 
Consider also the function
\[ \Lambda(\bu) = \liminf_{t\rightarrow t_0^+} \Psi(\bu,t),
\]
which, from Lemma \ref{lemphicondition}, is a Borel function in $\bu\in \Ucal_I^\sharp$,
with  $\Lambda(\bu)\geq 0$ for all $\bu\in \Ucal_I^\sharp$,
and such that $\Lambda(\bu) = 0$ if and only if $\bu\in \Ucal_I$.
Therefore, in order to show that $\rho$ is carried by $\Ucal_I$
it suffices to show that $\Lambda(\bu)=0$ for $\rho$-almost all $\bu$
in $\Ucal_I^\sharp$. Since $\Lambda(\bu)\geq 0$ for all $\bu\in \Ucal_I^\sharp$,
it suffices to show that
\begin{equation}
  \int_{\Ucal_I^\sharp} \Lambda(\bu) \;\rd\rho(\bu) = 0.
\end{equation}
For that purpose, let us recall the mean energy inequality (with $\psi(r)=r$ in the
definition) satisfied by the Vishik-Fursikov measure starting at $t'=t_0$:
\begin{multline*}
  \int_{\Ucal_I^\sharp} \left\{ \frac{1}{2}|\bu(s)|_{L^2}^2 +\nu\int_{t_0}^s \|\bu(\tau)\|_{H^1}^2 
    \;\rd \tau
            \right\} \;\rd\rho(\bu) \\
      \leq \int_{\Ucal_I^\sharp} \left\{ \frac{1}{2}|\bu(t_0)|_{L^2}^2 
              + \int_{t_0}^s (\bbf(\tau),\bu(\tau))_{L^2} \;\rd \tau \right\} \;\rd\rho(\bu),      
\end{multline*}
for all $s\in I$. Using the Cauchy-Schwarz, Poincar\'e, and Young inequalities on 
the forcing term we find that
\[  \int_{\Ucal_I^\sharp} \left\{ |\bu(s)|_{L^2}^2  - |\bu(t_0)|_{L^2}^2 
     +\nu\int_{t_0}^s \|\bu(\tau)\|_{H^1}^2 \;\rd \tau \right\}\;\rd\rho(\bu)
      \leq  \frac{1}{\nu\lambda_1} (s-t_0)\|\bbf\|_{L^\infty(I;H)}^2.
\]

Taking the time average with respect to $s$ in the interval from $t_0$ to $t$ and 
discarding the viscous term we find that
\[ \int_{\Ucal_I^\sharp} \Psi(\bu,t) \;\rd\rho(\bu) \leq
      \frac{1}{2\nu\lambda_1}(t-t_0)\|\bbf\|_{L^\infty(I;H)}^2, 
\]
for all $t\in I\setminus \{t_0\}$. Let $t$ goes to $t_0$ to obtain
\[ \limsup_{t\rightarrow t_0^+} \int_{\Ucal_I^\sharp} \Psi(\bu,t) \;\rd\rho(\bu)
      \leq 0.
\] 
Now, since $\Psi(\bu,t)\geq -|\bu(t_0)|_{L^2}^2$ for all $t\in I$ and the function
$\bu\mapsto -|\bu(t_0)|_{L^2}^2$
is $\rho$-integrable we may apply Fatou's Lemma
to deduce that 
\begin{multline*}
   \int_{\Ucal_I^\sharp(R)} \Lambda(\bu)\;\rd\rho(\bu)
     = \int_{\Ucal_I^\sharp(R)} \liminf_{t\rightarrow t_0^+} \Psi(\bu,t)\;\rd\rho(\bu) \\
     \leq \liminf_{t\rightarrow t_0^+} \int_{\Ucal_I^\sharp(R)} \Psi(\bu,t) \;\rd\rho(\bu)
     \leq \limsup_{t\rightarrow t_0^+} \int_{\Ucal_I^\sharp} \Psi(\bu,t) \;\rd\rho(\bu)
      \leq 0.
\end{multline*}
Since $\Lambda(\bu)\geq 0$ for all $\bu\in \Ucal_I^\sharp$ we deduce from 
the previous inequality that
$\Lambda(\bu)=0$ for $\rho$-almost all $\bu\in \Ucal_I^\sharp$,
which means that $\rho$ is carried by $\Ucal_I$, and the proof is complete.
\end{proof}
\medskip

\subsection{Accretion property for Vishik-Fursikov statistical solutions}

The evolution of the measure of an ensemble of initial data 
will be studied in the forthcoming work \cite{frtssp2}. In that work we will be mainly
concerned with stationary statistical solution, but a particular result about accretion
will be first obtained for time-dependent Vishik-Fursikov statistical solutions.
For that reason, we will mention this result in the following remark.

\begin{rmk}
  Consider the multi-valued evolution operator defined as follows:
  Given a set $E$ in $H$ and $t\geq 0$, we denote by $\Sigma_t E$
  the set of all points $\bw\in H$, such that $\bw=\bu(t)$ and $\bu$ is in $\Ucal_{[0,\infty)}$ 
  with initial condition $\bu(0)\in  E$. As it is proved in \cite{frtssp2}, given any Borel
  set $E$ and any $t\geq 0$, the set $\Sigma_t E$ is universally
  measurable in $H$, i.e. it is measurable with respect to the Lebesgue
  completion of any Borel measure in $H$. Then, we prove in \cite{frtssp2} that
  if $\{\rho_t\}_{t\geq 0}$ is a Vishik-Fursikov statistical solution over $[0,\infty)$,
  we have that $\{\rho_t\}_{t\geq 0}$ is accretive with respect to the family 
  $\{\Sigma_t\}_{t\geq 0}$ in the sense that
  \[ \rho_t(\Sigma_t E) \geq \rho_0(E),
  \]
  for all Borel sets $E$ in $H$ and all $t\geq 0$.
\end{rmk}

\subsection{Characterization of the Vishik-Fursikov statistical solutions}
\label{characvfss}
After Theorems \ref{defvfsshalfclosedinterval} and \ref{defvfssopeninterval}
one can raise the following question:  Given 
a statistical solution $\{\mu_t\}_{t\in I}$ in the sense of Definition 
\ref{deftimedependetstatisticalsolution}, for some interval 
$I\subset \RR$, is there a Vishik-Fursikov probability measure $\rho$ on 
$\Ccal_\loc(I,H_\rw)$ 
such that $\Pi_t\rho =\mu_t$, for all $t\in I$? In other
words, when can we say that a statistical solution is a Vishik-Fursikov
statistical solution? We present below a criterion for 
when this is true or not in the case of an interval open on 
the left and such that $\mu_t$ is carried by a bounded set in $H$, uniformly in $t$.
  
First, let us prove the following localization result.
\begin{prop}
  \label{localizationofcarrierofprojections}
  A Borel probability measure $\rho$ on $\Ccal_\loc(I,H_\rw)$ is carried by the space
  $\Ccal_\loc(I,B_H(R)_\rw)$ if and only if $\rho_t$ is carried by $B_H(R)$ for $t$ in a 
  countable dense subset of $I$.
\end{prop}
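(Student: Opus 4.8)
The plan is to identify the set $\Ccal_\loc(I,B_H(R)_\rw)$ with a countable intersection of preimages under the projection maps $\Pi_t$, which makes both directions of the equivalence transparent. First I would record the elementary facts used throughout: the closed ball $B_H(R)$ is convex and norm-closed, hence weakly closed, so it is a closed --- in particular Borel --- subset of $H_\rw$; each projection $\Pi_t:\Ccal_\loc(I,H_\rw)\to H_\rw$ is continuous, so $\Pi_t^{-1}(B_H(R))$ is closed, hence Borel, in $\Ccal_\loc(I,H_\rw)$; and $\rho_t(B_H(R))=\rho(\Pi_t^{-1}(B_H(R)))$ by the very definition $\rho_t=\Pi_t\rho$ of the induced measures.

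The core of the argument is the set identity
\[
   \Ccal_\loc(I,B_H(R)_\rw) \;=\; \bigcap_{t\in D} \Pi_t^{-1}(B_H(R)),
\]
valid for \emph{any} dense subset $D\subseteq I$. The inclusion ``$\subseteq$'' is immediate. For ``$\supseteq$'', suppose $\bu\in\Ccal_\loc(I,H_\rw)$ satisfies $\bu(t)\in B_H(R)$ for all $t\in D$; given $s\in I$, pick $t_n\in D$ with $t_n\to s$, so that $\bu(t_n)\rightharpoonup\bu(s)$ in $H$ by the weak continuity built into $\Ccal_\loc(I,H_\rw)$, and then $\bu(s)\in B_H(R)$ because $B_H(R)$ is weakly closed. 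Hence $\bu$ takes all its values in $B_H(R)$, i.e. $\bu\in\Ccal_\loc(I,B_H(R)_\rw)$. As a by-product this shows that $\Ccal_\loc(I,B_H(R)_\rw)$ is a Borel subset of $\Ccal_\loc(I,H_\rw)$, so that the statement ``$\rho$ is carried by $\Ccal_\loc(I,B_H(R)_\rw)$'' makes sense.

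Both implications then follow by bookkeeping. If $\rho$ is carried by $\Ccal_\loc(I,B_H(R)_\rw)$, then since $\Ccal_\loc(I,B_H(R)_\rw)\subseteq\Pi_t^{-1}(B_H(R))$ for every $t\in I$, we get $\rho_t(B_H(R))=\rho(\Pi_t^{-1}(B_H(R)))\ge\rho(\Ccal_\loc(I,B_H(R)_\rw))=1$ for all $t\in I$, in particular on any countable dense subset. Conversely, if $D\subseteq I$ is countable and dense with $\rho_t(B_H(R))=1$ for each $t\in D$, then each $\Pi_t^{-1}(B_H(R))$, $t\in D$, has full $\rho$-measure, and since $D$ is countable so does their intersection, which by the identity above equals $\Ccal_\loc(I,B_H(R)_\rw)$; thus $\rho$ is carried by $\Ccal_\loc(I,B_H(R)_\rw)$. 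The only step needing any care is the set identity --- precisely, propagating the pointwise bound from $D$ to all of $I$ --- and this rests solely on weak continuity and the weak closedness of $B_H(R)$; I do not anticipate real difficulty here.
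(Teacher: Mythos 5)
Your proof is correct, and its skeleton is the same as the paper's: reduce everything to the identity $\Ccal_\loc(I,B_H(R)_\rw)=\bigcap_{t\in D}\Pi_t^{-1}(B_H(R))$ for a countable dense $D$, use countability to intersect full-measure sets, and use weak continuity of trajectories together with weak closedness of $B_H(R)$ (equivalently, lower semicontinuity of $t\mapsto|\bv(t)|_{L^2}$, which is what the paper invokes) to propagate the pointwise constraint from $D$ to all of $I$. Where you genuinely diverge is in the justification of the single equality $\rho_t(B_H(R))=\rho(\Pi_t^{-1}(B_H(R)))$: you take it directly from the set-theoretic definition of the induced measure $\rho_t=\Pi_t\rho$, noting that $B_H(R)$ is weakly closed so that $\Pi_t^{-1}(B_H(R))$ is a closed, hence Borel, set. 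The paper instead works from the integral formulation of the projection (valid a priori for $\varphi\in\Ccal_\rb(H_\rw)$) and therefore has to approximate the characteristic function of $B_H(R)$ by continuous cylindrical functions $\psi_{R'}(|P_m\cdot|_{L^2})$, passing to the limit first in $m$ by dominated convergence and then in $R'\downarrow R$ by monotonicity; this occupies most of its proof. Your route is shorter and loses nothing, since the induced measure is defined on all Borel sets; the paper's route has the minor virtue of only ever evaluating $\rho_t$ against continuous test functions, which is the form in which the projected measures are actually used elsewhere in the paper.
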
  
  
\begin{proof}
  Let $\rho$ be a Borel probability measure on $\Ccal_\loc(I,H_\rw)$.
  
  If $\rho$ is carried by $\Ccal_\loc(I,B_H(R)_\rw)$ and $t\in I$ is arbitrary, then, using that
  $\Pi_t^{-1}B_H(R)\supset\Ccal_\loc(I,B_H(R)_\rw)$, we find that
  \[ \rho_t(B_H(R)) = \rho(\Pi_t^{-1}B_H(R))\geq \rho(\Ccal_\loc(I,B_H(R)_\rw))=1, 
  \]
  so that $\rho_t$ is carried by $B_H(R)$ for all $t$ in $I$, and, hence, in particular,
  for $t$ in any countable dense subset of $I$.
  
  Now let us assume that $\rho_t$ is carried by $B_H(R)$, for $t$ in a countable
  dense subset $D$ of $I$.
  Consider $R'>R$ and let $\psi_{R'}(s)$ be the continuous
  positive function defined for $s\geq 0$ which is equal to $1$, for $0\leq s\leq R$, to $0$
  for $s\geq R'$, and is linear, for $R\leq s \leq R'$. 
  Let $\varphi(\bu)=\psi_{R'}(|P_m\bu|_{L^2})$
  for some integer $m$, where $P_m$ is the Galerkin projector defined in 
  Section \ref{NSEsettingsec}. Then, for any time $t\in D$,
  \begin{align*}
     \int_{\Ccal_\loc(I,H_\rw)}\psi_{R'}(|P_m\bv(t)|_{L^2})\;\rd\rho(\bv) 
      & = \int_{\Ccal_\loc(I,H_\rw)}\varphi(\bv(t))\;\rd\rho(\bv) \\
      & = \int_{H_\rw} \varphi(\bu)\;\rd\rho_{t}(\bu) \\
      & = \int_{H_\rw} \psi_{R'}(|P_m\bu|_{L^2})\;\rd\rho_{t}(\bu).
  \end{align*}
  Since $\rho_{t}$ is carried by $B_H(R)_\rw$ and $\psi_{R'}(|P_m\bu|_{L^2})=1$
  in this ball, we have
  \begin{align*}
    \int_{\Ccal_\loc(I,H_\rw)}\psi_{R'}(|P_m\bv(t)|_{L^2})\;\rd\rho(\bv) 
      & = \int_{B_H(R)_\rw} \psi_{R'}(|P_m\bu|_{L^2})\;\rd\rho_{t}(\bu) \\
      & = \int_{B_H(R)_\rw} \;\rd\rho_{t}(\bu) = 1.
  \end{align*}
  Letting $m\rightarrow\infty$, we obtain, thanks to the Lebesgue Dominated Convergence
  Theorem, that
  \begin{equation}
    \label{intpsiRprime}
    \int_{\Ccal_\loc(I,H_\rw)}\psi_{R'}(|\bv(t)|_{L^2})\;\rd\rho(\bv) = 1.
  \end{equation}
  Letting $R'$ decrease to $R$, the functions $\psi_{R'}$ decrease to the characteristic
  function of the interval $[0,R]$, so that $\bu\mapsto\psi_{R'}(|\bu|_{L^2})$ decreases to
  the characteristic function $\bu\mapsto\chi_{B_H(R)}(\bu)$. Hence, \eqref{intpsiRprime}
  implies, at the limit,
  \begin{equation}
    \label{intchit1}
    \int_{\Ccal_\loc(I,H_\rw)} \chi_{B_H(R)}(\bv(t))\;\rd\rho(\bv) = 1.
  \end{equation}
  Letting
  \[ \Ccal_I(t;R)=\left\{\bv\in\Ccal_\loc(I,H_\rw); \;|\bv(t)|_{L^2}\leq R\right\},
  \]
  we have that \eqref{intchit1} implies that
  \[ \rho\left(\Ccal_I(t;R)\right) = 1, \quad \forall t\in D.
  \]
  Consequently,
  \begin{equation}
    \rho\left(\bigcap_{t\in D} \Ccal_I(t;R)\right) =1.
  \end{equation}
  Notice that
\begin{equation}
  \bigcap_{t\in D} \Ccal_I(t;R)
    = \left\{\bv\in\Ccal_\loc(I,H_\rw); \;|\bv(t)|_{L^2}\leq R, \;\forall t\in D\right\}.
  \end{equation}
  Now, since $t\rightarrow |\bv(t)|_{L^2}$ is lower-semi-continuous, we have that
  \[ |\bv(t)|_{L^2}\leq R, \quad \forall t\in \RR, 
       \quad \forall \bv\in \bigcap_{t\in D} \Ccal_I(t;R).
  \]
  Thus, 
  \[ \bigcap_{t\in D} \Ccal_I(t;R) = \Ccal_\loc(I,B_H(R)_\rw),
  \]
  and, hence,
  \begin{equation}
    \rho(\Ccal_\loc(I,B_H(R)_\rw))=1,
  \end{equation}
  or, in other words, $\rho$ is carried by $\Ccal_\loc(I,B_H(R)_\rw)$.
\end{proof}  
  
Now, for any $R\geq R_0$, consider the set of statistical solutions (according to
Definition \ref{deftimedependetstatisticalsolution}) carried by the closed ball $B_H(R)$:
\begin{equation*}
  \Mcal_R = \left\{ \{\mu_t\}_{t\in I} \text{ is a statistical solution with } 
    \mu_t(B_H(R)) = 1, \;\forall t\in I \right\}.
\end{equation*}
One can show that, for such measures, the map 
\begin{equation}
  t\mapsto \int_H \varphi(\bu) \;\rd\mu_t(\bu) \in \Ccal([t_0,\infty)),
\end{equation}
is continuous on $I$, for any $\varphi\in\Ccal(B_H(R)_\rw)$.
Indeed, when $\varphi$ is of the form \eqref{cylindricalfunctions}, the continuous
dependence in $t$ of this integral follows from equation \eqref{liouvilleeq}. For 
a general $\varphi\in\Ccal(B_H(R)_\rw)$, we notice that $\varphi$ can be uniformly 
approximated by functions $\varphi_k$ of the type \eqref{cylindricalfunctions}, thanks
to the Stone-Weierstrass Theorem (see Section \ref{cylindricaltestfn}). 
The continuous dependence in $t$ of the
integrals $\int_H \varphi_k(\bu) \;\rd\mu_t(\bu)$ gives, at the limit, the
continuous dependence in $t$ of $\int_H \varphi_k(\bu) \;\rd\mu_t(\bu)$. 

In this set $\Mcal_R$, we define the sequential convergence
\[ \mu^{(n)} \rightarrow \mu, \quad \text{as } n\rightarrow \infty,
\]
by the condition that, for any $\varphi\in\Ccal(B_H(R)_\rw)$, 
\begin{equation}
  \label{convergenceinmr}
  \int_H \varphi(\bu) \;\rd\mu_t^{(n)}(\bu)
        \rightarrow  \int_H \varphi(\bu) \;\rd\mu_t(\bu), \quad \text{as } n\rightarrow \infty,
\end{equation}
uniformly in $t$ on any compact subset of $I$.

Now, we also define the set of Dirac delta-like statistical solutions, that is
statistical solutions supported on an individual (Leray-Hopf) weak solution of the
Navier-Stokes equations,
\[ \Dcal_R = \left\{ \{\delta_{\bu(t)}\}_{t\in I}; \; \bu \text{ is a
      weak solution on } I \text{ with } \bu(t)\in B_H(R), 
      \forall t\in I \right\},
\]
and the set of Vishik-Fursikov statistical solutions
\[ \mathcal{VF}_R = \left\{ \{\rho_t\}_{t\in I} \text{ is a Vishik-Fursikov statistical 
     sol. with } \rho_t(B_H(R)) = 1, \;\forall t\in I \right\}.
\]

It is straighforward to show that 
\begin{equation}
  \label{inclusionspacesofmeasures}
  \Dcal_R \subset \mathcal{VF}_R \subset \Mcal_R,
\end{equation}
and that $\mathcal{VF}_R$ and $\Mcal_R$ are convex. We then consider the
convex hull $\co\Dcal_R$ of $\Dcal_R$, which is the set of all finite convex combinations of
elements of $\Dcal_R$. Since $\mathcal{VF}_R$ and $\Mcal_R$ are convex, we have
\begin{equation}
  \label{inclusionspacesofmeasuresconvex}
  \co\Dcal_R \subset \mathcal{VF}_R \subset \Mcal_R.
\end{equation}

One can also show that $\mathcal{VF}_R$ and $\Mcal_R$ 
are closed for the sequential convergence defined above. Hence, considering the
closure of $\co\Dcal_R$ with respect to this sequential convergence (i.e. all the measures
obtained as the limits of sequences of finite convex combinations of measures in 
$\Dcal_R$), we have 
\begin{equation}
  \label{inclusionspacesofmeasuresconvexclosure}
  \overline{(\co\Dcal_R)} \subset \mathcal{VF}_R \subset \Mcal_R.
\end{equation}

The next result shows that $\mathcal{VF}_R$ is precisely the
closure of the convex hull of $\Dcal_R$.

\begin{thm}
  \label{charactvfr}
  Let $I\subset\RR$ be an interval open on the left and $R\geq R_0$. Then, 
  \[ \mathcal{VF}_R=\overline{(\co\Dcal_R)}.
  \] 
  In other words, a statistical solution in $\Mcal_R$
  is a Vishik-Fursikov statistical solution
  if and only if it is the limit, in the sense of \eqref{convergenceinmr}, of a sequence of 
  finite convex combinations of measures in $\Dcal_R$. 
\end{thm}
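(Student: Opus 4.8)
The plan is to establish the only non-obvious inclusion, $\mathcal{VF}_R\subset\overline{(\co\Dcal_R)}$; the reverse inclusion is already recorded in \eqref{inclusionspacesofmeasuresconvexclosure}. So let $\{\rho_t\}_{t\in I}\in\mathcal{VF}_R$ and choose a Vishik-Fursikov measure $\rho$ on $\Ccal_\loc(I,H_\rw)$ with $\Pi_t\rho=\rho_t$ for every $t\in I$. Since $\rho_t(B_H(R))=1$ for all $t\in I$, Proposition \ref{localizationofcarrierofprojections} shows that $\rho$ is carried by $\Ccal_\loc(I,B_H(R)_\rw)$; combined with the fact that $\rho$ is carried by $\Ucal_I^\sharp$ (definition of a Vishik-Fursikov measure), and with $\Ucal_I^\sharp=\Ucal_I$ and $\Ucal_I^\sharp(R)=\Ucal_I(R)$ for $I$ open on the left, this gives that $\rho$ is carried by the space $\Ucal_I^\sharp(R)$, which is a compact metric space by Proposition \ref{ucalitildercompact}. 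The uniform boundedness of the supports of the $\rho_t$ is precisely what makes this compactness available, and hence is the reason the statement is restricted to $\mathcal{VF}_R$.

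Next I would apply the Krein-Milman argument recalled in Section \ref{measuretheory} to the compact metric space $X=\Ucal_I^\sharp(R)$: every Borel probability measure on $X$ is the weak-star limit of a sequence of finite convex combinations of Dirac measures on $X$. Thus there are measures $\rho^{(n)}=\sum_{j=1}^{J(n)}\theta_j^{(n)}\delta_{\bu_j^{(n)}}$, with $\bu_j^{(n)}\in\Ucal_I(R)$, $0<\theta_j^{(n)}\leq1$, $\sum_j\theta_j^{(n)}=1$, and $\rho^{(n)}\stackrel{*}{\rightharpoonup}\rho$ in $\Ccal(\Ucal_I^\sharp(R))'$. Projecting in time, $\Pi_t\rho^{(n)}=\sum_j\theta_j^{(n)}\delta_{\bu_j^{(n)}(t)}$. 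Each $\bu_j^{(n)}$ is a Leray-Hopf weak solution on $I$ with values in $B_H(R)$, so $\delta_{\bu_j^{(n)}}$ is trivially a Vishik-Fursikov measure and, by Theorem \ref{defvfssopeninterval}, $\{\delta_{\bu_j^{(n)}(t)}\}_{t\in I}\in\Dcal_R$; since a finite convex combination of statistical solutions is again a statistical solution (all conditions of Definition \ref{deftimedependetstatisticalsolution} being stable under such combinations, the common exceptional set in the strengthened mean energy inequality being a finite intersection of full-measure sets), we get $\{\Pi_t\rho^{(n)}\}_{t\in I}\in\co\Dcal_R$ for every $n$.

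It then remains to verify that $\Pi_t\rho^{(n)}\to\rho_t$ in the sense of \eqref{convergenceinmr}: for every $\varphi\in\Ccal(B_H(R)_\rw)$ and every compact $J\subset I$, the functions
\[
  g_n(t):=\int_H\varphi(\bu)\,\rd(\Pi_t\rho^{(n)})(\bu)=\int_{\Ucal_I^\sharp(R)}\varphi(\bu(t))\,\rd\rho^{(n)}(\bu)
\]
should converge, uniformly in $t\in J$, to $g(t):=\int_{\Ucal_I^\sharp(R)}\varphi(\bu(t))\,\rd\rho(\bu)=\int_H\varphi(\bu)\,\rd\rho_t(\bu)$. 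For fixed $t$ the map $\bu\mapsto\varphi(\bu(t))$ belongs to $\Ccal(\Ucal_I^\sharp(R))$ (it is $\varphi$ composed with the continuous evaluation $\Pi_t$), so weak-star convergence gives $g_n(t)\to g(t)$ pointwise. The upgrade to uniform convergence on $J$ is the heart of the argument: the map $\bu\mapsto(t\mapsto\varphi(\bu(t)))$ is continuous from $\Ccal_\loc(I,B_H(R)_\rw)$ into $\Ccal_\loc(I,\RR)$ (using that $\varphi$ is uniformly continuous on the compact metric space $B_H(R)_\rw$), so it carries the compact set $\Ucal_I^\sharp(R)$ onto a compact subset of $\Ccal_\loc(I,\RR)$, which is equicontinuous on $J$ by the Arzela-Ascoli theorem; integrating this equicontinuity estimate against the probability measures $\rho^{(n)}$ and $\rho$ shows that $\{g_n\}_n$ and $g$ form an equicontinuous family on $J$, and equicontinuity together with the pointwise convergence already established forces uniform convergence on the compact set $J$. (Alternatively, one first handles $\varphi$ a cylindrical test function, for which the required equicontinuity in $t$ follows directly from the uniform weak equicontinuity of the solution family $\Ucal_I^\sharp(R)$, and then passes to general $\varphi\in\Ccal(B_H(R)_\rw)$ through the Stone-Weierstrass density recalled in Section \ref{cylindricaltestfn} and a $3\varepsilon$ argument, using that all the measures involved are probabilities.) This gives $\{\rho_t\}_{t\in I}\in\overline{(\co\Dcal_R)}$ and finishes the proof. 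The main obstacle is exactly this last uniform-in-time control; everything before it is bookkeeping plus the Krein-Milman citation.
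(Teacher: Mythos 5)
Your proposal is correct and follows essentially the same route as the paper: reduce to the inclusion $\mathcal{VF}_R\subset\overline{(\co\Dcal_R)}$, use Proposition \ref{localizationofcarrierofprojections} to localize the Vishik--Fursikov measure onto the compact space $\Ucal_I^\sharp(R)$, apply Krein--Milman to approximate by convex combinations of Dirac measures, and upgrade the resulting pointwise convergence of $t\mapsto\int\varphi(\bu(t))\,\rd\rho^{(n)}(\bu)$ to uniform convergence on compact intervals via the weak equicontinuity of the solution family. Your treatment of the uniform-in-time step is in fact slightly more explicit than the paper's (which simply invokes equicontinuity of the $\psi_n$), but the argument is the same.
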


\begin{proof}
  It is not difficult to check that $\Dcal_R$ is closed in $\Mcal_R$ with respect to
  the convergence of sequences in $\Mcal_R$ according to \eqref{convergenceinmr}.
  Then, due to \eqref{inclusionspacesofmeasures}, we need to show that
  \begin{equation}
    \label{inclusionneedobeshowfomeasures}
    \mathcal{VF}_R \subset \overline{(\co\Dcal_R)}.
  \end{equation}  
  In other words, we need to show that if a statistical solution belongs to $\mathcal{VF}_R$,
  then it can be
  approximated, in the sense above, by a sequence of convex combinations
  of Dirac measures. Let then $\{\rho_t\}_{t\in I}$ be a Vishik-Fursikov statistical
  solution such that $\rho_t=\Pi_t\rho$, for all $t\in I$, for some $\rho\in\Ccal_\loc(I,H_\rw)$
  with $\rho_t(B_H(R))=1$, for all $t\in I$.
  
  Thanks to Proposition \ref{localizationofcarrierofprojections}, we know that $\rho$ is 
  carried by $\Ccal_\loc(I,B_H(R)_\rw)$, and, hence, it is carried by $\Ucal_I^\sharp(R)$.

  Since $\Ucal_I^\sharp(R)$ is a compact Polish space, and thus a
  compact separable space, and $\rho$ is a Borel probability measure on 
  $\Ucal_I^\sharp(R)$, it follows from the Krein-Milman Theorem that each $\rho$ 
  is the limit of a sequence of convex combinations of Dirac deltas in $\Ucal_I^\sharp(R)$, 
  i.e.
  \begin{equation}
    \label{pcalucalikreinmilman}
    \sum_j \theta_j^{(n)} \delta_{\bv_j^{(n)}} \stackrel{*}{\rightharpoonup} \rho,
  \end{equation} 
  with $\theta_j^{(n)} \geq 0$, $\sum_j \theta_j^{(n)} = 1$, $\bv_j^{(n)}\in \Ucal_I^\sharp(R)$.
  
  The convergence \eqref{pcalucalikreinmilman} means that, for every 
  $\Phi\in\Ccal(\Ucal_I^\sharp(R))$, we have
  \[ \sum_j \theta_j^{(n)}\Phi(\bv_j^{(n)}) \rightarrow \int_{\Ucal_I^\sharp(R)} \Phi(\bv)
      \;\rd\rho(\bv).
  \]
  Taking $\Phi(\bv)=\varphi(\bv(t))$, for $\varphi\in \Ccal(B_H(R))$ and $t\in I$,
  define
  \[  \psi_n(t) = \sum_j \theta_j^{(n)} \varphi(\bv_j^{(n)})
  \]
  and 
  \[ \psi(t) = \int_{H} \varphi(\bu)\;\rd\rho_t(\bu),
  \]
  and notice that
  \begin{multline*}
    \psi_n(t) = \sum_j \theta_j^{(n)} \varphi(\bv_j^{(n)}) \rightarrow
      \int_{\Ucal_I^\sharp(R)} \varphi(\bv(t))\;\rd\rho(\bv) \\
      = \int_{H} \varphi(\bu)\;\rd\rho_t(\bu) = \psi(t), \quad \forall t\in I.
  \end{multline*}
  
  Since the weak solutions in $\Ucal_I^\sharp(R)$ are weakly 
  equicontinuous on any compact interval $J\subset I$ with values in $H_\rw$,
  we have that the $\psi_m$ are equicontinuous, hence their pointwise
  convergence to $\psi$ on $J$ implies their uniform convergence to $\psi$
  on $J$, for every compact interval $J\subset I$. This means that 
  $\{\sum_j \theta_j^{(n)} \delta_{\bv_j^{(n)}(t)}\}_t$ converges to $\{\rho_t\}_t$
  in the sense of \eqref{convergenceinmr}, which concludes the proof
  of \eqref{inclusionneedobeshowfomeasures} and of Theorem \ref{charactvfr}.
\end{proof}

We now have the following characterization of the Vishik-Fursikov statistical 
solutions.

\begin{cor}
  Let $I\subset \RR$ be an interval open on the left.
  Let $\mu=\{\mu_t\}_{t\in I}$
  be a statistical solution in $\Mcal_R$, for some $R\geq R_0$. Then, $\{\mu_t\}_{t\in I}$
  is a Vishik-Fursikov statistical solution if and only if there exists a sequence
  $\{\rho^{(n)}\}_n$ of convex combinations of statistical solutions of the form
  $\{\delta_{\bv(t)}\}_{t\in I}$, where $\bv\in \Ucal_I^\sharp(R)$, such that
  $\rho^{(n)}\rightarrow \mu$ in $\Mcal_R$.
\end{cor}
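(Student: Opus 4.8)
The plan is to recognize that this corollary is nothing more than a restatement of Theorem~\ref{charactvfr} in the language of the set $\Mcal_R$, so the proof reduces to matching up the objects involved. First I would record the dictionary. Since $I$ is open on the left, one has $\Ucal_I^\sharp(R)=\Ucal_I(R)$, as observed in Section~\ref{trajectoryspaces}; fix any $\bv\in\Ucal_I^\sharp(R)=\Ucal_I(R)$. Then the Dirac measure $\delta_{\bv}$ on $\Ccal_\loc(I,H_\rw)$ is a Vishik-Fursikov measure over $I$: it is carried by $\Ucal_I^\sharp$, one has $\int_{\Ucal_I^\sharp}|\bu(t)|_{L^2}^2\,\rd\delta_{\bv}(\bu)=|\bv(t)|_{L^2}^2\le R^2$ so the corresponding map lies in $L^\infty_\loc(I)$, and condition~\eqref{meancontinuityatinitialtimeforvfm} of Definition~\ref{defvfmeasure} is void because $I$ is open on the left. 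Hence, by Theorem~\ref{defvfssopeninterval}, $\{\delta_{\bv(t)}\}_{t\in I}$ is a statistical solution, and since $\bv\in\Ucal_I(R)$ it in fact belongs to $\Dcal_R$; conversely every element of $\Dcal_R$ arises in this way. Consequently a finite convex combination of statistical solutions of the form $\{\delta_{\bv(t)}\}_{t\in I}$ with $\bv\in\Ucal_I^\sharp(R)$ is precisely an element of $\co\Dcal_R$, and, $\Mcal_R$ being convex, such combinations belong to $\Mcal_R$; moreover the convergence ``$\rho^{(n)}\to\mu$ in $\Mcal_R$'' is, by definition, the sequential convergence~\eqref{convergenceinmr}.

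With this dictionary in place, the condition in the corollary — the existence of a sequence $\{\rho^{(n)}\}_n$ in $\co\Dcal_R$ converging to $\mu$ in $\Mcal_R$ — says exactly that $\mu\in\overline{(\co\Dcal_R)}$. By Theorem~\ref{charactvfr}, $\overline{(\co\Dcal_R)}=\mathcal{VF}_R$, so the condition is equivalent to $\mu\in\mathcal{VF}_R$. Finally, since by hypothesis $\mu\in\Mcal_R$, i.e.\ $\mu_t(B_H(R))=1$ for every $t\in I$, the membership $\mu\in\mathcal{VF}_R$ is equivalent to the statement that $\mu$ is a Vishik-Fursikov statistical solution, because $\mathcal{VF}_R$ consists exactly of those Vishik-Fursikov statistical solutions whose components are carried by $B_H(R)$. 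Chaining these equivalences yields both implications of the corollary.

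I do not expect a genuine obstacle here: all the analytic content — the compactness of $\Ucal_I^\sharp(R)$, the Krein-Milman approximation, the localization of the carrier (Proposition~\ref{localizationofcarrierofprojections}), and the equicontinuity argument that upgrades pointwise convergence to convergence uniform on compact subintervals — has already been carried out in the proof of Theorem~\ref{charactvfr}. The only points deserving a word of care are the verification, indicated above, that each $\{\delta_{\bv(t)}\}_{t\in I}$ with $\bv\in\Ucal_I^\sharp(R)$ is genuinely a statistical solution, and the observation that the mode of convergence appearing in the corollary coincides with the one used in Theorem~\ref{charactvfr}, namely uniform convergence on compact subintervals of $t\mapsto\int_H\varphi\,\rd\mu_t$ for every $\varphi\in\Ccal(B_H(R)_\rw)$.
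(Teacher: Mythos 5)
Your proposal is correct and follows essentially the same route as the paper: both deduce the corollary directly from Theorem \ref{charactvfr} by identifying the convex combinations of Dirac-type statistical solutions with elements of $\co\Dcal_R$ and the stated convergence with \eqref{convergenceinmr}. Your write-up merely makes explicit the points the paper leaves implicit (that each $\{\delta_{\bv(t)}\}_{t\in I}$ with $\bv\in\Ucal_I^\sharp(R)=\Ucal_I(R)$ is a Vishik-Fursikov statistical solution in $\Dcal_R$, and that $\mathcal{VF}_R$ is exactly the set of Vishik-Fursikov statistical solutions carried by $B_H(R)$), all of which is accurate.
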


\begin{proof}
  Since any family of the form $\{\delta_{\bv(t)}\}_{t\in I}$ is a Vishik-Fursikov
  statistical solution, any convex combination of them, as in the statement of
  the corollary, is a Vishik-Fursikov statistical solution as well, and, therefore,
  so is its limit. This last fact as well as the converse property follow directly from 
  Theorem \ref{charactvfr}.
\end{proof}

\begin{rmk}
  In the case of measures with unbounded support, not belonging to $\Mcal_R$
  for any $R>0$, we consider, for $E>0$, the set
  \begin{equation}
    \Mcal_{(E)} = \left\{ \{\mu_t\}_{t\in I} \text{ is a statistical solution };
       \;\int_H |\bu|_{L^2}^2\;\rd\mu_t(\bu) \leq E, \;\forall t\in I \right\}.
  \end{equation}
  We also define in a similar manner the sets $\Dcal_{(E)}$ and $\mathcal{VF}_{(E)}$.
  Then,
  \[ \Dcal_{(E)} \subset \co\Dcal_{(E)} \subset \mathcal{VF}_{(E)} 
       \subset \Mcal_{(E)}, 
  \]
  and the analog of Theorem \ref{charactvfr} would be that every statistical solution in 
  $\Mcal_{(E)}$ is a Vishik-Fursikov statistical solution if and only if it is the limit,
  in some suitably defined sense, of measures in $\co\Dcal_{(E)}$. However, this analog
  result remains a conjecture at this point and depends, in part, on an appropriate
  definition of convergence for such measures.
  \qed
\end{rmk}

\end{document}